\title{Cohomological invariants in positive characteristic }
\author{Burt Totaro}
\date{  }
\def\Z{\text{\bf Z}}
\def\P{\text{\bf P}}
\def\F{\text{\bf F}}
\DeclareMathOperator{\tame}{tame}
\DeclareMathOperator{\nr}{nr}
\DeclareMathOperator{\et}{et}
\DeclareMathOperator{\Spec}{Spec}
\def\arrow{\rightarrow}
\def\inj{\hookrightarrow}
\DeclareMathOperator{\Inv}{Inv}
\DeclareMathOperator{\Hom}{Hom}
\DeclareMathOperator{\Gal}{Gal}
\DeclareMathOperator{\Pic}{Pic}
\DeclareMathOperator{\NormInv}{NormInv}
\DeclareMathOperator{\Zar}{Zar}
\DeclareMathOperator{\bal}{bal}
\DeclareMathOperator{\Br}{Br}
\DeclareMathOperator{\ch}{char}
\DeclareMathOperator{\disc}{disc}
\DeclareMathOperator{\clif}{clif}
\DeclareMathOperator{\Spin}{Spin}
\DeclareMathOperator{\tr}{tr}
\def\o{\overline}
\def\m{\mathfrak{m}}
\DeclareMathOperator{\bi}{b}
\def\ll{\langle\langle}
\def\etale{\'etale }
\begin{document}
\maketitle
\newtheorem{theorem}{Theorem}[section]
\newtheorem{proposition}[theorem]{Proposition}
\newtheorem{corollary}[theorem]{Corollary}
\newtheorem{lemma}[theorem]{Lemma}

\theoremstyle{definition}
\newtheorem{definition}[theorem]{Definition}
\newtheorem{example}[theorem]{Example}

\theoremstyle{remark}
\newtheorem{remark}[theorem]{Remark}

\'Etale cohomology works especially well with $\Z/l$ coefficients
such that $l$ is invertible in the base field.
In the 1980s, however, Kato used differential forms
to define groups $H^i_{\et}(k,\Z/m(j))$ for a field $k$
and any positive integer $m$,
even when $m$ is not invertible in $k$
\cite[p.~219]{Katocomplete}. Kato's groups behave surprisingly well.
For example, we have $H^1_{\et}(k,\Z/m(0))\cong H^1_{\et}(k,\Z/m)$,
the group classifying cyclic $\Z/m$-extensions of $k$, and
$H^2_{\et}(k,\Z/m(1))\cong \Br(k)[m]$, the $m$-torsion subgroup
of the Brauer group, whether $m$ is invertible in $k$
or not.

Nowadays, there is an ``explanation'' for Kato's groups: Voevodsky's
\etale motivic cohomology groups $H^i_{\et}(X,A(j))$ of a scheme $X$
over a field $k$ are defined
for any abelian group $A$. They agree with the familiar \etale
cohomology with coefficients in $\mu_m^{\otimes j}$ when $A$ is $\Z/m$ with $m$
invertible in $k$ and $j\geq 0$,
and they agree with Kato's groups when $X=\Spec(k)$ and $A=\Z/m$
for any $m$
\cite[Theorem 10.2]{MVW}, \cite{GL}.

In this paper, we make some new calculations
of mod $p$ \etale motivic cohomology in characteristic $p$.
In particular, we compute
the group of cohomological invariants (in Serre's sense)
for some important affine group schemes, such as the symmetric groups
(Theorem \ref{symm2}),
the finite group schemes $(\mu_p)^a\times (\Z/p)^b$ (Theorem \ref{abgp}),
and the orthogonal groups $O(n)$ and $SO(n)$
(Theorems \ref{evenorthog}, \ref{oddorthog},
\ref{oddplus}, \ref{oplus}). These calculations were
done in \cite[Chapters VI and VII]{GMS}
for $\Z/l$ coefficients with $l\neq p$,
and we carry out the case $l=p$. For the orthogonal groups,
the interesting new case is where these groups are considered
over a field of characteristic 2. In that case, our
calculation amounts to determining the group of cohomological
invariants for quadratic forms in characteristic 2.

One outcome of the calculations is that there are often fewer mod $p$
cohomological invariants when the base field has characteristic $p$.
For example, a basis for the mod 2 cohomological invariants for the orthogonal
group $O(n)$ in characteristic not 2 is given by the Stiefel-Whitney
classes $1=w_0, w_1,\ldots,w_n$, whereas in characteristic 2 there are only
analogs of $w_1$ and $w_2$, the discriminant (or Arf invariant)
and the Clifford invariant.
In particular, cohomological invariants are not enough to give the lower bounds
for the essential dimension of $O(n)$ and $SO(n)$ in characteristic 2 proved
by Babic and Chernousov \cite{BC}. The cohomological invariants
of the spin groups $\Spin(n)$ in characteristic 2
(as in other characteristics)
are not known, but for small $n$ there are enough
invariants to give optimal lower bounds on the essential dimension
\cite{Totarospin}.

We also determine all operations on the mod $p$ \etale motivic cohomology
of fields
(section \ref{operationsect}),
extending Vial's computation of the operations on the mod $p$
Milnor $K$-theory of fields \cite{Vial}.

As far as I know, this paper gives the first calculations
of all mod $p$ cohomological invariants for a given affine
group scheme in characteristic $p$. We use a geometric description
of such invariants by Blinstein and Merkurjev
(Theorem \ref{bm} below),
but the only full calculations seem to be in low degrees.
In particular, Blinstein and Merkurjev described
the cohomological invariants in degrees at most 3 for tori
\cite{BM}. Merkurjev determined all degree 3 invariants
for simply connected semisimple groups; they are generated
by the Rost invariant in the mod $p$ case, as in
the mod $l$ case \cite[Part 2, Theorem 9.11]{GMS}.

A key difference between \etale motivic cohomology in the mod $p$
case and the mod $l$ case is that mod $p$ \etale motivic cohomology
of schemes
is not $A^1$-homotopy invariant. (For example, for $k$ algebraically
closed of characteristic $p$, $H^1_{\et}(k,\Z/p)$ is zero, while
$H^1_{\et}(A^1_k,\Z/p)$ is not zero: there are many nontrivial
\etale $\Z/p$-coverings of the affine line.) This failure is related
to the phenomenon of wild ramification (section \ref{ramsection}),
which does not occur
in the mod $l$ case. One goal of this paper
is to show that, although the lack of $A^1$-homotopy invariance
means that some familiar arguments no longer apply,
mod $p$ \etale motivic cohomology is still a useful
and computable theory. A crucial ingredient of the proofs
is an analysis of tame and wild ramification
for classes in \etale motivic cohomology, extending work
of Izhboldin (Theorem \ref{filtration}).

This work was supported by NSF grant DMS-1701237.

\tableofcontents

\section{Background on \etale motivic cohomology}
\label{background}

Building on earlier work of Bloch and Kato,
Geisser and Levine proved the relation between Voevodsky's
\etale motivic cohomology and Kato's invariants of fields based
on differential forms. Namely, let $k$ be a field of characteristic $p>0$
which is perfect, meaning that every element of $k$ is a $p$th power,
and let $X$ be a smooth scheme over $k$.
For $j\geq 0$,
let $\Omega^j_{\log}$ be the subsheaf of $\Omega^j_X$ generated locally
by logarithmic differentials $df_1/f_1\wedge\cdots
\wedge df_j/f_j$ for units $f_1,\ldots,f_j$. (This is a sheaf
of $\F_p$-vector spaces, not of $O_X$-modules.) More generally,
for $r>0$, let $W_r\Omega^j_{\log}$ be the analogous subsheaf
of logarithmic de Rham-Witt differentials \cite{Illusie}.
Then Voevodsky's object $\Z/p^r(j)$ 
in the derived category of Zariski (or \etale) sheaves on $X$ is isomorphic
to the shift $W_r\Omega^j_{\log}[-j]$ \cite[Proposition 3.1,
Theorem 8.3]{GL}. As a result, \etale motivic cohomology,
meaning the \etale cohomology of $X$ with coefficients in $\Z/p^r(j)$,
can be rewritten in terms of differential forms:
$$H^i_{\et}(X,\Z/p^r(j))\cong H^{i-j}_{\et}(X,W_r\Omega^j_{\log}).$$

This has consequences for any field $k$ of characteristic $p$,
not necessarily perfect. Indeed, such a field has \etale $p$-cohomological
dimension at most 1.  As a result, $H^i_{\et}(k,\Z/p^r(j))$ is zero
except when $i$ is $j$ or $j+1$. When $i=j$, Bloch and Kato
identified this group with the Milnor $K$-group
$K_j^M(k)/p^r$, or also with the group $W_r\Omega^j_{\log,k}$
\cite[Corollary 2.8]{BK}.
There are several ways to describe the remaining mod $p^r$ \etale
motivic cohomology groups of a field, when $i=j+1$; we concentrate
on the case $r=1$. 

Write $H^{i,j}(k)=H^i_{\et}(k,\Z/p(j))$.
One description of these groups is in terms of Galois cohomology.
For a field $k$ of characteristic $p>0$, let $k_s$ be a separable
closure of $k$. Let $\Omega^j_k$ be the group
of (absolute) differential forms on $k$, which can be viewed
as $\Omega^j_{k/\Z}$ or $\Omega^j_{k/\F_p}$.
Write $\Omega^j_{\log,k}$ for the subgroup
of $\Omega^j_k$ generated by elements $(da_1/a_1)\wedge\cdots
\wedge (da_j/a_j)$ with $a_1,\ldots,a_j$ in $k^*$. Then
$$H^{i,j}(k)\cong \begin{cases}
\Omega^j_{\log,k}\cong H^0_{\Gal}(k,\Omega^j_{\log,k_s})
&\text{if }i=j\\
H^1_{\Gal}(k,\Omega^j_{\log,k_s})&\text{if }i=j+1\\
0&\text{otherwise.}
\end{cases}$$
The Galois group $\Gal(k_s/k)$ of a field $k$ of characteristic $p>0$
has $p$-cohomological dimension at most 1 \cite[section II.2.2]{SerreGalois},
which explains why only $H^0$ and $H^1$ occur here.

For another description of these groups (Kato's original definition
\cite{Katoquadratic}),
define a group homomorphism
$\mathcal{P}\colon \Omega^j_k\arrow \Omega^j_k/d\Omega^{j-1}_k$
by
$$\mathcal{P}(a(db_1/b_1)\wedge\cdots\wedge(db_j/b_j))=
(a^p-a)(db_1/b_1)\wedge\cdots\wedge(db_j/b_j).$$
Then $H^{j+1,j}(k)$ is isomorphic to
the cokernel of $\mathcal{P}$
\cite[Corollary 6.5]{Izhboldintorsion}.
In fact, there is an exact sequence:
$$0\arrow H^{j,j}(k)\arrow \Omega^j_k\xrightarrow[\mathcal{P}]{}
\Omega^j_k/d\Omega^{j-1}_k\arrow H^{j+1,j}(k)\arrow 0.$$

Let $k$ be a field of characteristic $p>0$. For an element $a$
of $k$, write $[a]$ for the class of $a$ in $H^{1,0}(k)=k/\mathcal{P}(k)$,
where $\mathcal{P}(a)=a^p-a$, as above.
For $b_1,\ldots,b_j$ in $k^*$, the {\it symbol }$\{b_1,\ldots,b_j\}$
in $H^{j,j}(k)$ means the class of the differential form
$(db_1/b_1)\wedge\cdots\wedge (db_j/b_j)$; this agrees
with the standard notation in Milnor $K$-theory, via the isomorphism
$H^{j,j}(k)\cong K_j^M(k)/p$.
Finally, for $a\in k$ and $b_1,\ldots,b_j\in k^*$,
the symbol 
$$[a,b_1,\ldots,b_j\}\in H^{j+1,j}(k)$$
means the class
of the differential form $a(db_1/b_1)\wedge\cdots\wedge(db_j/b_j)$.
Both groups $H^{j,j}(k)$ and $H^{j+1,j}(k)$ are generated by symbols,
by the descriptions above.

For a scheme $X$ of characteristic $p$,
\etale motivic cohomology with $\Z/l(j)$ coefficients for $l\neq p$
and $j\geq 0$ can be
identified with \etale cohomology with the familiar coefficients
$\mu_l^{\otimes j}$. (For $X$ smooth over $k$, which is the only
case we will need, this is \cite[Theorem 10.2]{MVW}.)
In particular, it follows that \etale motivic cohomology with
$\Z/l(j)$ coefficients with $l\neq p$
is $A^1$-homotopy invariant, by one of Grothendieck's
fundamental results \cite[Corollary VI.4.20]{Milne}.
By contrast, mod $p$ \etale motivic cohomology
is not $A^1$-homotopy invariant. For a simple example, look
at $H^{1,0}(X)\cong H^1_{\et}(X,\Z/p)$. We have the
Artin-Schreier exact sequence of \etale sheaves:
$$0\arrow \Z/p\arrow O_X\xrightarrow[\mathcal{P}]{} O_X\arrow 0,$$
where $\mathcal{P}(a)=a^p-a$. For $X$ affine, it follows that
we have an exact sequence
$$O(X) \xrightarrow[\mathcal{P}]{} O(X)\arrow H^1_{\et}(X,\Z/p)\arrow 0.$$
For example, if $k$ is an algebraically closed field, then
$H^1_{\et}(k,\Z/p)=0$, whereas one checks from this exact sequence
that $H^1_{\et}(A^1_k,\Z/p)$ is isomorphic to a countably infinite direct
sum of copies of $k$.

Let $G$ be an affine group scheme of finite type over a field $k$.
This determines a functor from fields over $k$
to sets by $F\mapsto H^1(F,G)$, the set of isomorphism classes of $G$-torsors
over $F$. (Here $G$-torsors are defined in the most general sense,
using the fppf topology; for $G$ smooth over $k$, this is the same
as $G$-torsors in the \etale topology \cite[Remark III.4.8]{Milne}.)
The abelian group of {\it cohomological invariants }of $G$ with values in
$H^i_{\et,\Z/m(j)}$, written $\Inv^i_k(G,\Z/m(j))$,
means the set of natural transformations
from $H^1(F,G)$ to $H^i_{\et}(F,\Z/m(j))$, on the category of fields
$F$ over $k$. When the positive integer $m$ is invertible in $k$,
the group of cohomological invariants was computed for several
important groups $G$ in \cite[Chapters VI and VII]{GMS}:
the symmetric groups, elementary abelian groups,
and the orthogonal groups. In this paper, we will make the analogous
mod $p$ calculations when $p$ is the characteristic of $k$.

A cohomological invariant for a group scheme $G$ over $k$
is {\it normalized }if it is equal to zero on the trivial $G$-torsor.
It is immediate that the group of invariants for $G$ splits as the direct sum
of the ``constant'' invariants and the normalized invariants:
$$\Inv^i_k(G,\Z/m(j))\cong H^{i}(k,\Z/m(j))\oplus \NormInv^{i}_k(G,
\Z/m(j)).$$

Some insight into the group of cohomological invariants
is provided by the existence of a {\it versal torsor}.
Let $G$ be
an affine group scheme over an infinite field $k$, and let $V$
be a $k$-vector space on which $G$ acts by affine transformations.
Suppose that $G$ acts
freely on a nonempty Zariski open subset $U$ of $V$, with a quotient
scheme $U/G$. Then every $G$-torsor over an extension field of $k$
is pulled back from the $G$-torsor $U\arrow U/G$
\cite[section I.5]{GMS}. As a result, we have an injection
$$\Inv^i_k(G,\Z/m(j))\inj H^i(k(U/G), \Z/m(j)).$$
Also, cohomological invariants always give cohomology classes
on $k(U/G)$ that are unramified along all divisors in $U/G$.

For $m$ invertible in $k$, this injection is in fact
an isomorphism to the group $H^0(U/G,H^i)$ of unramified classes,
under the mild extra assumption
that $V-U$ has codimension at least 2 in $V$ \cite[Part 1,
Appendix C]{GMS}. However, that argument relies on
$A^1$-homotopy invariance. For $p=\ch(k)$, where
$A^1$-homotopy invariance fails, one cannot
expect to identify the mod $p$ cohomological invariants
of $G$
with the unramified cohomology of a quotient variety $U/G$;
consider the case of the trivial group $G$ and
vector spaces $U=V$ of various dimensions. However, Blinstein
and Merkurjev provided a substitute: for any
positive integer $m$, the group
of cohomological invariants for $G$ need not be the whole
group $H^i_{\nr}(k(U/G),\Z/m(j))$, but it is always the subgroup
of {\it balanced }elements in $H^i(k(U/G),\Z/m(j))$, meaning the elements
whose pullbacks via the two projections $(U\times U)/G\arrow
U/G$ are equal. Balanced elements
are always unramified, and so the group of cohomological
invariants can also be described as the subgroup of balanced
elements in unramified cohomology \cite[Theorem A]{BM}:

\begin{theorem}
\label{bm}
Let $G$ be an affine group scheme of finite type over an infinite
field $k$.
Let $U$ be a smooth $k$-variety with a free $G$-action
such that there is a quotient scheme $U/G$.
Suppose that $U$ is $G$-equivariantly birational to an affine space
over $k$ on which $G$ acts by affine transformations. Then
\begin{align*}
\Inv^i_k(G,\Z/m(j)) &\cong H^i(k(U/G),\Z/m(j))_{\bal}\\
&\cong H^0_{\Zar}(U/G,H^i_{\Z/m(j)})_{\bal}.
\end{align*}
\end{theorem}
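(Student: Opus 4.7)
The plan is to build mutually inverse maps between $\Inv^i_k(G,\Z/m(j))$ and $H^i(k(U/G),\Z/m(j))_{\bal}$, and then to observe that every balanced class is automatically unramified on $U/G$, yielding the second isomorphism. Throughout, set $F_0:=k(U/G)$, equip $U\times U$ with the diagonal $G$-action, and write $p_1,p_2\colon (U\times U)/G\arrow U/G$ for the two projections.

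\textbf{The two maps.} For the forward direction, restrict an invariant $\alpha$ to the generic $G$-torsor $U\arrow U/G$ regarded as a torsor over $F_0$, obtaining $\tilde\alpha\in H^i(F_0,\Z/m(j))$. Each $\pi_i\colon U\times U\arrow U$ is $G$-equivariant, so it identifies $U\times U\arrow (U\times U)/G$ with the pullback of $U\arrow U/G$ along both $p_1$ and $p_2$; naturality of $\alpha$ then forces $p_1^*\tilde\alpha=p_2^*\tilde\alpha$, so $\tilde\alpha$ is balanced. For the backward direction, I use the versality statement already quoted in the paragraph before the theorem: since $U$ is $G$-equivariantly birational to an affine $G$-representation and $k$ is infinite, every $G$-torsor $T\arrow\Spec F$ is isomorphic to the pullback $x^*(U\arrow U/G)$ for some morphism $x\colon\Spec F\arrow U/G$. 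I set $\alpha(T):=x^*\alpha$. Independence from the choice of $x$ uses the balanced hypothesis: given two classifying morphisms $x_1,x_2$, the choice of an isomorphism between the two pulled-back torsors produces a lift $\tilde x\colon\Spec F\arrow (U\times U)/G$ with $p_i\circ\tilde x=x_i$, so $p_1^*\alpha=p_2^*\alpha$ gives $x_1^*\alpha=x_2^*\alpha$. The two constructions are manifestly mutually inverse: taking $x$ to be the identity on the generic point of $U/G$ in the case of the generic torsor recovers $\alpha$ from its associated invariant, while the other composite is forced by naturality.

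\textbf{Balanced implies unramified; main obstacle.} For the pullback $x^*\alpha$ to make sense along an arbitrary field-valued point I must know that $\alpha$ lies in $H^0_{\Zar}(U/G,H^i_{\Z/m(j)})$. Let $D\subset U/G$ be a prime divisor with generic point $\eta$, and let $D':=p_1^{-1}(D)\subset (U\times U)/G$. Because $p_1$ is smooth with fibres an open subscheme of an affine $G$-representation, the DVR extension $\mathcal{O}_{U/G,D}\subset\mathcal{O}_{(U\times U)/G,D'}$ is unramified, and $p_2$ sends the generic point of $D'$ to the generic point of $U/G$ (not into $D$). Hence $p_2^*\alpha$ is regular at $D'$, so the balanced identity $p_1^*\alpha=p_2^*\alpha$ yields $p_1^*(\partial_D\alpha)=0$ in $H^{i-1}(\kappa(D'),\Z/m(j-1))$; since $\kappa(D')$ is a rational function field over $\kappa(D)$, the pullback on $H^{i-1}$ is injective, forcing $\partial_D\alpha=0$. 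The main obstacle is precisely this last injectivity: in the $\ell$-prime-to-$p$ setting it is immediate from $A^1$-homotopy invariance of \'etale cohomology, but for mod $p$ coefficients that shortcut is lost and one instead has to extract injectivity from Kato's explicit description of $H^{\bullet,\bullet}$ in terms of differential forms (for instance, for $H^{1,0}$ via Artin--Schreier it reduces to the elementary statement that $k/\mathcal{P}(k)\arrow k(t)/\mathcal{P}(k(t))$ is injective). Once this is in hand, balanced classes lie in $H^0_{\Zar}(U/G,H^i_{\Z/m(j)})$ and both isomorphisms of the theorem follow.
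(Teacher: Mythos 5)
The paper does not prove this statement: it is quoted verbatim from Blinstein--Merkurjev \cite[Theorem A]{BM}, so there is no internal proof to compare against. Your outline does follow their general strategy (evaluate at the generic torsor in one direction, evaluate a balanced class at a classifying point in the other, with balancedness giving independence of the classifying point), and the forward map and the well-definedness mechanism via the lift $\tilde x$ to $(U\times U)/G$ are correct in spirit. But the two steps that are genuinely hard when $p\mid m$ are treated as if they were formal, and both are gaps.

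First, your ``balanced implies unramified'' argument computes $\partial_{D'}(p_1^*\alpha)=p_1^*(\partial_D\alpha)$ and then invokes injectivity of $H^{i-1}(\kappa(D),\Z/m(j-1))\arrow H^{i-1}(\kappa(D'),\Z/m(j-1))$. When $p\mid m$ the residue $\partial_D$ is only defined on the tamely ramified subgroup (section \ref{ramsection}), and by Theorem \ref{filtration} ``unramified'' means ``tame \emph{and} residue zero.'' So before any residue computation you must rule out wild ramification of $\alpha$ along $D$, which your argument never addresses; the injectivity you single out as the main obstacle only governs the tame graded piece. The correct argument has to compare the classes of $\alpha$ and $p_1^*\alpha$ in each graded piece $U_j/U_{j-1}$ ($\cong\Omega^n_{\kappa(D)}$ or $\Omega^n_{\kappa(D)}/Z^n_{\kappa(D)}\oplus\Omega^{n-1}_{\kappa(D)}/Z^{n-1}_{\kappa(D)}$) and use injectivity of $\Omega^\bullet_{\kappa(D)}\arrow\Omega^\bullet_{\kappa(D')}$, and of the quotients by closed forms, for the separable extension $\kappa(D')/\kappa(D)$.

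Second, and more seriously, the sentence ``the other composite is forced by naturality'' hides the technical heart of the theorem. An invariant is natural only with respect to field extensions, whereas a classifying point $x\in (U/G)(F)$ is in general not the generic point; the identity $\beta(x^*\xi)=x^*\bigl(\beta(\xi_{k(U/G)})\bigr)$ is therefore a specialization statement. In the mod $l$ case it is exactly here that $A^1$-homotopy invariance is used \cite[Part 1, Appendix C]{GMS}; in the mod $p$ case this is where Blinstein and Merkurjev have to work, and it cannot be dispatched in one clause. Relatedly, to evaluate the equality $p_1^*\alpha=p_2^*\alpha$ at the point $\tilde x$ you need it as an equality of sections of the Zariski sheaf $H^i_{\Z/m(j)}$ on $(U\times U)/G$, not merely in the cohomology of its function field; that, and the identification of the unramified classes with $H^0_{\Zar}$, require the Gersten-type injectivity $H^0_{\Zar}(X,H^i_{\Z/m(j)})\inj H^i(k(X),\Z/m(j))$ for smooth $X$, a nontrivial input for $p$-torsion coefficients (Gros--Suwa, Geisser--Levine) that your sketch does not mention.
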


Our calculation of the cohomological invariants of the group scheme
$\mu_p$
(Proposition \ref{mup}), on which the rest of the paper depends,
relies on Theorem \ref{bm}.

\section{Ramification and residues}
\label{ramsection}

In this section, building on the work of Izhboldin, we describe
\etale motivic cohomology for a field with a discrete valuation.
In particular, there are notions of tame and wild ramification
for cohomology classes,
and a residue homomorphism. The quotient of \etale motivic cohomology
by the unramified subgroup can be described very explicitly
(Theorem \ref{filtration}).
Finally, we state Izhboldin's calculation of the \etale motivic
cohomology of a rational function field (Theorem \ref{k(t)}).
All this is used for the basic calculations of the paper,
the determination of the cohomological invariants for
the group schemes $\mu_p$
and $\Z/p$ (Propositions \ref{mup} and \ref{zp}).

Let $F$ be a field with a discrete valuation $v$. Let $O_F$
be the valuation ring $\{x\in F: v(x)\geq 0\}$, and let $k=O_F/\m$
be the residue field. Define the subgroup
of {\it unramified }classes in $H^i_{\et}(F,\Z/m(j))$ to be the image
of $H^i_{\et}(O_F,\Z/m(j))$. (More concretely, for $p=\ch(k)$,
in the description
of $H^{n+1}_{\et}(F,\Z/p(n))$ as a quotient of $\Omega^n_F$ (section
\ref{background}), the unramified subgroup
is the subgroup generated by elements
$a(db_1/b_1)\wedge\cdots\wedge(db_n/b_n)$ with
$a_i\in O_F$ and $b_1,\ldots,b_n\in O_F^*$.)
If $m$ is invertible in $k$,
then the subgroup of unramified classes is the kernel
of the {\it residue }homomorphism \cite[Part 1, section 7.9]{GMS}:
$$\partial_v\colon H^i_{\et}(F,\Z/m(j))\arrow H^{i-1}_{\et}(k,\Z/m(j-1)).$$

If $m$ is not invertible in $R$, what happens is more complicated,
but still manageable. If $F$ is complete with respect to the valuation
$v$, define the {\it tame }subgroup
in $H^i_{\et}(F,\Z/m(j))$ to be the kernel of the homomorphism
to $H^i_{\et}(F_{\tame},\Z/m(j))$, where $F_{\tame}$ is
the maximal tamely ramified extension of $F$. (An algebraic extension of
a complete discrete valuation field $F$
is {\it tame }if it is a union of finite
extensions such that
the extension of residue fields is separable
and the ramification degree is invertible in $k$.) For any discrete
valuation field $F$, not necessarily complete, define
the {\it tame }(or {\it tamely ramified})
subgroup of $H^i_{\et}(F,\Z/m(j))$ to be the inverse image
of the tame subgroup in $H^i_{\et}(F_v,\Z/m(j))$, writing $F_v$
for the completion.

The whole group $H^i$ is tamely ramified
if $m$ is invertible in $k$. For general $m$,
the residue homomorphism is not defined
on all of $H^i_{\et}(F,\Z/m(j))$, but only on the tamely
ramified subgroup \cite[Corollary 2.7]{Izhboldinrat}:
$$\partial_v\colon H^i_{\et,\tame}(F,\Z/m(j))\arrow H^{i-1}_{\et}(k,
\Z/m(j-1))$$
As a result, mod $p$ \etale motivic cohomology does not fit
into the framework of Rost's cycle modules \cite{Rost}.
On the good side, Theorem \ref{filtration} will say: (1) The unramified
subgroup of \etale motivic cohomology
is the kernel of the residue on the tamely ramified subgroup.
(2) There is a satisfactory description
of the quotient of \etale motivic cohomology by the tamely ramified
subgroup.

\begin{remark}
When $m=\text{char}(k)$,
Izhboldin calls our ``tamely ramified'' subgroup
of \etale motivic cohomology
the ``unramified'' subgroup \cite{Izhboldinrat}.
That has the confusing consequence that the residue
homomorphism is nontrivial on his ``unramified'' subgroup. Our
use of ``tamely ramified'' follows Kato \cite[Theorem 3]{Katocomplete}
and Auel-Bigazzi-B\"ohning-von Bothmer \cite[Remark 3.8]{Auel}.
It also agrees with the terminology
used for the Brauer group \cite[Proposition 6.63]{TW}. (Note that
the group $H^2_{\et}(k,\Z/m(1))$ is the subgroup of the Brauer group of $k$
killed by $m$, for any positive integer $m$.)
\end{remark}

When the discretely valued field $F$ is complete of characteristic $p>0$,
Izhboldin analyzed the ``wild quotient''
of $H^{n+1,n}(F)=H^{n+1}_{\et}(F,\Z/p(n))$; we generalize his
result (not assuming completeness)
as Theorem \ref{filtration}. To set this up, use the description
of $H^{n+1,n}(F)$ as a quotient of $\Omega^n_F$ from section
\ref{background}.
Define an increasing filtration of $H^{n+1,n}(F)$ by: for $i\geq 0$,
let $U_i$ be the subgroup of $H^{n+1,n}(F)$ generated by
elements of the form
$$f\frac{dg_1}{g_1}\wedge\cdots\wedge\frac{dg_n}{g_n}$$
with $f\in F$, $g_1,\ldots,g_n\in F^*$, and $v(f)\geq -i$.
Then $U_0$ is the tamely ramified subgroup of $H^{n+1,n}(F)$, and
it is clear that
$$0\subset U_0\subset U_1\subset \cdots,$$
with $\cup_{i\geq 0} U_i = H^{n+1,n}(F)$.

Let $t\in O_F$ be a uniformizer for $v$, and write $a\mapsto \o{a}$
for the surjection $O_F\arrow k$.
If $j>0$ and $j$ is prime to $p$, define a homomorphism
$$\Omega^n_k\arrow U_j/U_{j-1}$$
by
$$\o{a}\frac{d\o{b_1}}{\o{b_1}}\wedge\cdots\wedge\frac{d\o{b_n}}
{\o{b_n}} \mapsto \frac{a}{t^j} 
\frac{db_1}{b_1}\wedge\cdots\wedge\frac{db_n}
{b_n} \pmod{U_{j-1}},$$
for $a\in O_F$ and $b_1,\ldots,b_n\in O_F^*$.
Let $Z^n_k$ be the subgroup of closed forms in $\Omega^n_k$.
If $j>0$ and $p|j$, define a homomorphism
$$\Omega^n_k/Z^n_k \oplus \Omega^{n-1}_k/Z^{n-1}_k\arrow U_j/U_{j-1}$$
by (for the first summand)
$$\o{a}\frac{d\o{b_1}}{\o{b_1}}\wedge\cdots\wedge\frac{d\o{b_n}}
{\o{b_n}} \mapsto \frac{a}{t^j} 
\frac{db_1}{b_1}\wedge\cdots\wedge\frac{db_n}
{b_n} \pmod{U_{j-1}}$$
and (for the second summand)
$$\o{a}\frac{d\o{b_1}}{\o{b_1}}\wedge\cdots\wedge\frac{d\o{b_{n-1}}}
{\o{b_{n-1}}} \mapsto \frac{a}{t^j} \frac{dt}{t}\wedge
\frac{db_1}{b_1}\wedge\cdots\wedge\frac{db_{n-1}}
{b_{n-1}} \pmod{U_{j-1}},$$
where $a\in O_F$ and $b_1,\ldots,b_n\in O_F^*$.

It is straightforward to check that the homomorphisms above
are well-defined (although they depend on the choice of uniformizer $t$).
First check that that the element in $U_j/U_{j-1}$ associated
to given elements $\o{a}\in k$ and $\o{b_i}\in k^*$ is independent
of the choice of lifts to $O_F$. (For example, in the case $j>0$, $p\nmid j$,
it is clear that changing the lift of $\o{a}$ changes the result by
an element of $U_{j-1}$. Changing the lift of $\o{b_i}$ amounts to
multiplying $b_i$ by $1+e$ for some $e\in \m$; since
$d(1+e)/(1+e)= (e/(1+e))(de/e)$, where $e/(1+e)$ is in $\m$,
this change of lift changes the result by adding an element of $U_{j-1}$,
as we want.) To finish showing that the homomorphisms above
are well-defined,
use Kato's presentation of $\Omega^n_k$
\cite[section 1.3, Lemma 5]{Katogen}:

\begin{proposition}
\label{katopres}
For any field $k$ and natural number $n$,
the group of differentials $\Omega^n_k=\Omega^n_{k/\Z}$
is the quotient of
$k\otimes_{\Z}(k^*)^{\otimes n}$ by the relations:
$$[a,b_1,\ldots,b_n\}=0$$
if $a\in k$, $b_1,\ldots,b_n\in k^*$, and $b_i=b_j$ for some $i\neq j$;
and
$$[u+v,u+v,b_2,\ldots,b_n\}=[u,u,b_2,\ldots,b_n\}+[v,v,b_2,\ldots,b_n\}$$
if $u,v,u+v\in k^*$.
(The map from this quotient group
to $\Omega^n_k$ takes the symbol
$[a,b_1,\ldots,b_n\}$ to $a_1 (db_1/b_1)\wedge\cdots
\wedge (db_n/b_n)$.)
\end{proposition}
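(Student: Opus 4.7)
The plan is to exhibit mutually inverse homomorphisms between the abstract quotient group $M^n := (k\otimes_{\Z}(k^*)^{\otimes n})/R$ (where $R$ is generated by the two relations in the statement) and $\Omega^n_k$. First I define $\phi\colon M^n \arrow \Omega^n_k$ by sending $[a,b_1,\ldots,b_n\}$ to $a\,(db_1/b_1)\wedge\cdots\wedge(db_n/b_n)$ and check that the two defining relations hold in $\Omega^n_k$. Relation~(I) is automatic because the wedge product is alternating. Relation~(II) reduces to the identity $d(u+v)=du+dv$: both sides of (II) map to $d(u+v)\wedge(db_2/b_2)\wedge\cdots\wedge(db_n/b_n)$. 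Surjectivity is clear, since every generator $a_0\,da_1\wedge\cdots\wedge da_n$ of $\Omega^n_k$ is either zero (if some $a_i=0$ for $i\geq 1$) or equals $(a_0 a_1\cdots a_n)(da_1/a_1)\wedge\cdots\wedge(da_n/a_n)$.

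For the reverse map I first treat $n=1$. The key preparatory step is to endow $M^1$ with a $k$-module structure by $c\cdot(a\otimes b):=(ca)\otimes b$. To see this is well-defined modulo the relations, I multiply (II) by $c\in k^*$ and observe that applying (II) itself to the pair $(cu,cv)\in(k^*)^2$, together with multiplicativity in the second tensor slot and $\Z$-bilinearity in the first, reproduces exactly the multiplied relation; the case $c=0$ is trivial. Then define $d_M\colon k\arrow M^1$ by $d_M(0):=0$ and $d_M(a):=a\otimes a$ for $a\in k^*$, and verify the derivation axioms: additivity on triples $(u,v,u+v)\in(k^*)^3$ is exactly relation~(II), while the Leibniz rule follows from $d_M(ab)=(ab)\otimes(ab)=(ab)\otimes a+(ab)\otimes b=b\,d_M(a)+a\,d_M(b)$ via multiplicativity in the second slot and commutativity in the first. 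The universal property of $\Omega^1_k$ then yields a $k$-linear $\psi_1\colon \Omega^1_k\arrow M^1$, and a check on generators gives $\phi\circ\psi_1=\mathrm{id}$ and $\psi_1\circ\phi=\mathrm{id}$. For general $n$, I would extend $M^n$ to an alternating multilinear target (relation~(I) supplies the alternating property) and realize $\psi_n$ via the universal property of $\wedge^n_k\Omega^1_k$, using $\psi_1$ slot by slot.

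The main obstacle I expect is the degenerate case $v=-u$ of additivity for $d_M$, which demands $u\otimes u+(-u)\otimes(-u)=0$ in $M^1$, equivalently $u\otimes(-1)=0$. This is automatic when $\ch(k)=2$; otherwise it must be extracted from relation~(II) by a focused calculation. Applying (II) to $(u,v)=(1,-2)$ gives $(-1)\otimes(-1)=(-2)\otimes(-2)$ (using $1\otimes 1=0$); applying (II) to $(1,1)$ gives $2\otimes 2=0$; combining these with $\Z$-bilinearity in both tensor slots produces $1\otimes(-1)=0$, and the $k$-module structure then promotes this to $u\otimes(-1)=0$ for every $u\in k$. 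Small finite fields require the analogous identities to be derived from whichever triples $(u,v,u+v)\in(k^*)^3$ happen to exist, but the argument in each case parallels Kato's in \cite[section~1.3]{Katogen}, and once this technicality is dispatched the rest of the proof is formal.
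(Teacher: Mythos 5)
The paper does not prove this proposition at all; it simply cites Kato \cite[section 1.3, Lemma 5]{Katogen}, so your argument is necessarily a reconstruction rather than a parallel to anything in the text. As a reconstruction it is essentially correct and follows the natural route (and, as you say, Kato's). The map $\phi$ to $\Omega^n_k$ and its surjectivity are fine. The $n=1$ analysis is the substantive part and is sound: the $k$-module structure $c\cdot(a\otimes b)=(ca)\otimes b$ descends because relation (II) applied to $(cu,cv)$ differs from the $c$-multiple of relation (II) at $(u,v)$ by $(c(u+v)-cu-cv)\otimes c=0$; the derivation axioms for $d_M(a)=a\otimes a$ reduce to (II) and to multiplicativity in the second slot; and the genuinely delicate point, the case $v=-u$ of additivity, is correctly reduced to $u\otimes(-1)=0$. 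Your computation with the triples $(1,1,2)$ and $(1,-2,-1)$ does establish this whenever $2\neq 0$, since it yields $(-1)\otimes(-1)=2\bigl((-1)\otimes(-1)\bigr)$ and hence $(-1)\otimes(-1)=0$; note that this computation is valid verbatim over $\F_3$ and every field of odd characteristic, so the worry about small finite fields is unfounded and no extra case analysis is needed.

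The one place where the write-up is thinner than it should be is the passage from $n=1$ to general $n$. To invoke the universal property of $\wedge^n_k\Omega^1_k$ you need a well-defined, $k$-balanced, alternating multilinear map $M^1\times\cdots\times M^1\arrow M^n$ sending $(a_1\otimes b_1,\ldots,a_n\otimes b_n)$ to $[a_1\cdots a_n,b_1,\ldots,b_n\}$, and checking that this respects the defining relations of $M^1$ in slot $i$ requires relation (II) in the $i$-th $b$-slot of $M^n$, whereas the presentation imposes it only in the first slot. The missing step is to derive antisymmetry of the $b$-slots from relation (I) by the usual polarization $[a,\ldots,b_ib_j,\ldots,b_ib_j,\ldots\}=0$, and then transport relation (II) from slot $1$ to slot $i$ by a transposition; the same antisymmetry is what cancels the off-diagonal terms when you verify that the multilinear map is alternating. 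None of this is hard, but it is exactly where ``using $\psi_1$ slot by slot'' hides real content, and it should be written out.
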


When $j>0$ and $p\nmid j$, it is straightforward from Proposition
\ref{katopres} to check that we have a well-defined homomorphism
$\Omega^n_k\arrow U_j/U_{j-1}$, above. When $j>0$ and $p|j$,
we can likewise see that we have a well-defined homomorphism
$\Omega^n_k/Z^n_k\oplus \Omega^{n-1}_k/Z^{n-1}_k\arrow U_j/U_{j-1}$,
using Cartier's theorem that,
for $k$ of characteristic $p>0$, the subgroup
$Z^n_k$ of closed forms in $\Omega^n_k$ is generated by the exact forms
together with the forms $a^p(db_1/b_1)\wedge\cdots\wedge (db_n/b_n)$
\cite[Lemma 1.5.1]{Izhboldinrat}. Our generalization 
of Izhboldin's result is:

\begin{theorem}
\label{filtration}
Let $F$ be a field of characteristic $p>0$ with a discrete
valuation $v$. Then $H^{n+1,n}(F)$ is the union of an increasing
sequence of subgroups $U_0\subset U_1\subset \cdots$,
with isomorphisms (depending on a choice of uniformizer in $F$):
$$U_j/U_{j-1}\cong \begin{cases} \Omega^n_k &\text{if }j>0\text{ and }
p\nmid j,\\
\Omega^n_k/Z^n_k \oplus \Omega^{n-1}_k/Z^{n-1}_k &\text{if }j>0
\text{ and }p|j.
\end{cases}$$
Moreover, there is a well-defined residue homomorphism
on $U_0=H^{n+1,n}_{\tame}(F)$, yielding an exact sequence
$$0\arrow H^{n+1,n}_{\nr}(F)\arrow H^{n+1,n}_{\tame}(F)
\xrightarrow[\partial_v]{}
H^{n,n-1}(k)\arrow 0,$$
where $H^{n+1,n}_{\nr}(F)$ is the unramified subgroup. Finally,
if the field $F$ is henselian (for example, complete)
with respect to $v$, then
$H^{n+1,n}_{\nr}(F)\cong H^{n+1,n}(k)$.
\end{theorem}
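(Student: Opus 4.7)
The plan is to bootstrap Izhboldin's theorem for the complete case to the general possibly non-complete setting by exploiting naturality, and to note that Izhboldin's arguments for well-definedness of the residue and the kernel identification essentially work verbatim for non-complete $F$ since they rely only on the defining relations of $H^{n+1,n}(F)=\Omega^n_F/(\mathcal{P}\Omega^n_F+d\Omega^{n-1}_F)$. The residue field $k$ of $F$ equals that of $F_v$, a uniformizer $t$ of $F$ remains a uniformizer of $F_v$, and the filtration $U_j$, the candidate maps from $\Omega^n_k$, the tame subgroup, and the residue are all defined by identical formulas and commute with the completion map $H^{n+1,n}(F)\to H^{n+1,n}(F_v)$.

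For the graded pieces $U_j/U_{j-1}$ with $j\geq 1$, I would first prove surjectivity of $\phi_j$ by direct computation. A generator of $U_j$ reduces modulo $U_{j-1}$ to $(u/t^j)\prod(db_i/b_i)$ with $u\in O_F^*$; writing each $b_i=t^{m_i}c_i$ with $c_i\in O_F^*$ and expanding, everything decomposes as a sum of type (A) elements $(u/t^j)\prod(dc_i/c_i)$ and type (B) elements $(u/t^j)(dt/t)\wedge\prod_{i<n}(dc_i/c_i)$. Type (A) is in the first-summand image of $\phi_j$. When $p\nmid j$, the identity $d(u/t^j)/(u/t^j)=du/u-j(dt/t)$ gives $(u/t^j)(dt/t)\wedge\omega = j^{-1}(u/t^j)(du/u)\wedge\omega - j^{-1}d((u/t^j)\omega)$ for $\omega$ a closed wedge of $d\log$'s, and since the second term is exact it vanishes in $H^{n+1,n}(F)$; thus type (B) reduces to type (A). When $p\mid j$, the $1/j$ trick fails and type (B) genuinely contributes the second summand. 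Injectivity is inherited from Izhboldin's isomorphism over $F_v$ via the commutative diagram $\phi_j^{F_v}=(U_j^F/U_{j-1}^F\to U_j^{F_v}/U_{j-1}^{F_v})\circ\phi_j^F$.

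Summing the graded isomorphisms for $j\geq 1$ yields $H^{n+1,n}(F)/U_0^F\cong H^{n+1,n}(F_v)/U_0^{F_v}$, so $U_0^F$ is exactly the preimage of $U_0^{F_v}=H^{n+1,n}_{\tame}(F_v)$, whence $U_0^F=H^{n+1,n}_{\tame}(F)$ by definition. The residue $\partial_v$ on $U_0^F$ can either be defined directly by the formula in $\Omega^n_F$ (with well-definedness checked as in Izhboldin, using Kato's presentation of $\Omega^n$) or pulled back from Izhboldin's residue on $F_v$ by naturality. Surjectivity follows from the explicit lift $\partial_v[a,t,u_1,\ldots,u_{n-1}\}=[\overline{a},\overline{u_1},\ldots,\overline{u_{n-1}}\}$. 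For the kernel, given $\alpha=A+B\in U_0^F$ with $A$ type (A) and $B$ type (B), the condition $\partial_v\alpha=0$ forces $\overline{B}=0$ in $H^{n,n-1}(k)=\Omega^{n-1}_k/(\mathcal{P}\Omega^{n-1}_k+d\Omega^{n-2}_k)$, so $\overline{B}\in\mathcal{P}\Omega^{n-1}_k+d\Omega^{n-2}_k$. The two key identities $(a^p-a)\omega\wedge(dt/t)=\mathcal{P}(a\omega\wedge(dt/t))$ and $d\zeta\wedge(dt/t)=d(\zeta\wedge(dt/t))$, both zero in $H^{n+1,n}(F)$, show that any lift of the defining relations of $\overline{B}$ to $\Omega^n_F$ vanishes in $H^{n+1,n}(F)$ modulo type (A) contributions; a careful tracking of the ambiguity of lifts (as in Izhboldin) absorbs the remainder into $H^{n+1,n}_{\nr}(F)$, yielding $\alpha\in H^{n+1,n}_{\nr}(F)$.

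For the final claim, when $F$ is henselian I would invoke the invariance of $\Z/p(n)$-\'etale cohomology under henselization (via the description $\Z/p(n)\cong W_1\Omega^n_{\log}[-n]$ and standard henselian rigidity) to identify $H^{n+1,n}(O_F)\cong H^{n+1,n}(\widehat{O_F})$, and then apply Izhboldin's complete-case conclusion $H^{n+1,n}_{\nr}(F_v)\cong H^{n+1,n}(k)$ to deduce $H^{n+1,n}_{\nr}(F)\cong H^{n+1,n}(k)$. The main obstacle is the kernel identification in the previous paragraph: while the two structural identities involving $\mathcal{P}$ and $d$ wedged with $dt/t$ do most of the work, the delicate point is the bookkeeping of lifting ambiguities from $\Omega^{n-1}_k$ to $\Omega^{n-1}_{O_F}$, which produces residual terms that must be absorbed into type (A) using auxiliary reductions (for instance, writing $c\,dt=d(ct)-t\,dc$) combined with Izhboldin's original line of argument.
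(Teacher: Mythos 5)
Your strategy for the graded pieces $U_j/U_{j-1}$ is exactly the paper's: the maps $\Omega^n_k\to U_j/U_{j-1}$ (resp.\ the two-summand version for $p\mid j$) attached to a uniformizer, surjectivity by splitting $F^*=t^{\Z}\times O_F^*$ into type (A)/(B) generators and using $d\bigl(u/(jt^j)\,\omega\bigr)$ to absorb the $dt/t$ terms when $p\nmid j$, and injectivity inherited from Izhboldin's complete-case isomorphism through the comparison map to $F_v$. That part is correct. For the exact sequence, the ``delicate bookkeeping'' you flag as the main obstacle is real if you insist on arguing forward from an element with vanishing residue, but it dissolves if you reorganize as the paper does: define $\varphi\colon H^{n,n-1}(k)\to H^{n+1,n}_{\tame}(F)/H^{n+1,n}_{\nr}(F)$ by $\o{a}\,\prod d\o{b_i}/\o{b_i}\mapsto a\,(dt/t)\wedge\prod db_i/b_i$, check well-definedness using Kato's presentation of differential forms together with the two identities you already list ($d\zeta\wedge dt/t$ exact, and $(a^p-a)(dt/t)\wedge\omega$ a $\mathcal{P}$-image), note that $\partial_v\circ\varphi=\mathrm{id}$, and note that $\varphi$ is surjective because every tame class is (unramified) $+$ (type (B)). A surjection with a left inverse is an isomorphism, so $\ker\partial_v=H^{n+1,n}_{\nr}(F)$ with no further lifting analysis. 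You have all the ingredients; only this packaging is missing.

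The one step I would not accept as written is the henselian case. There is no off-the-shelf ``henselian rigidity'' for mod $p$ \'etale motivic cohomology in characteristic $p$: Gabber-style rigidity requires torsion coefficients invertible in the residue field, while $\Z/p(n)\simeq \Omega^n_{\log}[-n]$ is built from quotients of coherent sheaves, whose sections genuinely change under completion ($O_F\subsetneq \widehat{O}_F$), and the theory is not even $A^1$-invariant, so such appeals should be treated with suspicion. The statement you want is true, but its proof is precisely the elementary argument you are trying to avoid: the map $H^{n+1,n}(k)\to H^{n+1,n}_{\nr}(F)$, $\o{a}\prod d\o{b_i}/\o{b_i}\mapsto a\prod db_i/b_i$, is well defined because for $F$ henselian every element of $\m$ can be written as $u^p-u$ (Hensel's lemma applied to $x^p-x-m$), so changing the lifts of $\o{a}$ and of the $\o{b_i}$ changes the form by something that dies in $H^{n+1,n}(F)$; it is surjective by the definition of the unramified subgroup, and injective because its composite with the map to $H^{n+1,n}_{\nr}(F_v)$ is Izhboldin's isomorphism with $H^{n+1,n}(k)$. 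Replace the appeal to rigidity by this argument and the proof is complete.
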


Without making a choice of uniformizer, the argument gives
the following canonical descriptions of $U_j/U_{j-1}$, which
we will not need:
$$U_j/U_{j-1}\cong \Omega^n_k\otimes_k (\m/\m^2)^{\otimes -j}$$
if $j>0$, $p\nmid j$, and
$$0\arrow (\Omega^n_k/Z^n_k)\otimes_k (\m/\m^2)^{\otimes -j}\arrow
U_j/U_{j-1}\arrow (\Omega^{n-1}_k/Z^{n-1}_k)\otimes_k (\m/\m^2)^{\otimes -j}
\arrow 0$$
if $j>0$, $p|j$.

\begin{proof}
When $F$ is complete, this was proved by Izhboldin
\cite[Theorem 2.5]{Izhboldinrat}. We address the henselian case
at the end. For any discretely valued field $F$,
write $F_v$ for the completion of $F$ with respect to $v$.
For brevity, write $U_j=U_j(F)$ and $N_j=U_j(F_v)$; thus
we know that $N_j/N_{j-1}$ is isomorphic to $\Omega^n_k$
for $j>0$, $p\nmid j$,
and to $\Omega^n_k/Z^n_k\oplus \Omega^{n-1}_k/Z^{n-1}_k$
for $j>0$, $p|j$.
There are obvious homomorphisms $U_j\arrow N_j$. It is clear
that $U_0$ is the tame subgroup of
$H^{n+1,n}(F)$; by definition, this statement reduces to the
corresponding fact for $F_v$.
We want to show that the homomorphism $U_j/U_{j-1}
\arrow N_j/N_{j-1}$ is an isomorphism for all $j>0$.

First, suppose that $j>0$ and $p\nmid j$. Fix a uniformizer $t$ for $F$.
From before
the theorem, we have homomorphisms
$$\Omega^n_k\arrow U_j/U_{j-1}\arrow N_j/N_{j-1}$$
whose composition is an isomorphism by Izhboldin.
To show that these homomorphisms are isomorphisms,
it suffices to show that our homomorphism $\Omega^n_k\arrow
U_j/U_{j-1}$ is surjective. 
Because $F^*=t^{\Z}\times O_F^*$, $U_j/U_{j-1}$ is generated by two types
of elements:
$(a/t^j)(db_1/b_1)\wedge\cdots\wedge(db_n/b_n)$ with
$a\in O_F$ and $b_1,\ldots,b_n\in O_F^*$, and elements
$(a/t^j)(dt/t)\wedge(db_2/b_2)\wedge\cdots\wedge(db_n/b_n)$
with $a\in O_F$ and $b_2,\ldots,b_n\in O_F^*$. The first
elements are clearly in the image of $\Omega^n_k$, by our
construction. For the second type of element, use that $p\nmid j$,
so that $d(-1/(jt^j))=(1/t^j)dt/t$. Therefore, for $a\in O_F$,
which we can assume is not zero,
and $b_2,\ldots,b_n\in O_F^*$,
$$d\bigg( -\frac{a}{jt^j}\frac{db_2}{b_2}\wedge\cdots\wedge
\frac{db_n}{b_n}\bigg) =
-\frac{a}{jt^j}\frac{da}{a}\wedge\frac{db_2}{b_2}\wedge\cdots\wedge
\frac{db_n}{b_n}
+\frac{a}{t^j}\frac{dt}{t}\wedge\frac{db_2}{b_2}\wedge\cdots\wedge
\frac{db_n}{b_n}.$$
Since exact forms represent zero in $H^{n+1,n}(F)$, it follows
that the element $(a/t^j)(dt/t)\wedge(db_2/b_2)\wedge\cdots\wedge(db_n/b_n)$
in $U_j/U_{j-1}$ that we are considering is equal to an element
$(a/(jt^j))(da/a)\wedge(db_2/b_2)\wedge\cdots\wedge(db_n/b_n)$.
If $a$ is in $O_F^*$, then this element is in the image of 
$\Omega^n_k$, as we want. On the other hand, if $a\in \m$,
then our element is in $U_{j-1}$, hence
zero in $U_j/U_{j-1}$. This completes
the proof that $U_j/U_{j-1}\cong \Omega^n_k$ for $j>0$, $p\nmid j$.

For $j>0$, $p|j$, we defined homomorphisms (before the theorem)
$$\Omega^n_k/Z^n_k \oplus \Omega^{n-1}_k/Z^{n-1}_k\arrow
U_j/U_{j-1} \arrow N_j/N_{j-1}$$
whose composition is an isomorphism. To show that these
homomorphisms are isomorphisms, it suffices to show
that $\Omega^n_k/Z^n_k\oplus \Omega^{n-1}_k/Z^{n-1}_k\arrow U_j/U_{j-1}$
is surjective. That is immediate from the definition of this
homomorphism. Indeed, since $F^*= t^{\Z}\times O_F^*$,
$U_j/U_{j-1}$ is generated
by two types of elements:
$(a/t^j)(db_1/b_1)\wedge\cdots\wedge(db_n/b_n)$ with
$a\in O_F$ and $b_1,\ldots,b_n\in O_F^*$, and
$(a/t^j)(dt/t)\wedge(db_2/b_2)\wedge\cdots\wedge(db_n/b_n)$
with $a\in O_F$ and $b_2,\ldots,b_n\in O_F^*$.

Thus we have determined the structure of $U_j/U_{j-1}$ for all
$j>0$. It is clear that $H^{n+1,n}(F)=\cup_{j\geq 0}U_j$.

Next, we show that the obvious homomorphism
$$H^{n+1,n}_{\tame}(F)/H^{n+1,n}_{\nr}(F)
\arrow H^{n,n-1}_{\tame}(F_v)/H^{n+1,n}_{\nr}(F_v)\cong H^{n,n-1}(k)$$
is an isomorphism. Here we define the unramified subgroup
$H^{n+1,n}_{\nr}(F)$ as the image of $H^{n+1,n}(O_F)$, or more
concretely as the subgroup generated by differential
forms $a(db_1/b_1)\wedge\cdots\wedge(db_n/b_n)$ with
$a_i\in O_F$ and $b_1,\ldots,b_n\in O_F^*$.

First, we define a homomorphism $H^{n,n-1}(k)\arrow
H^{n+1,n}_{\tame}(F)/H^{n+1,n}_{\nr}(F)$; it will be clear
that the composition
$$H^{n,n-1}(k)\arrow H^{n+1,n}_{\tame}(F)/H^{n+1,n}_{\nr}(F)
\arrow H^{n,n-1}(k)$$
is the identity. Namely, we map
$$\o{a}\frac{d\o{b_1}}{\o{b_1}}\wedge\cdots\wedge\frac{d\o{b_{n-1}}}
{\o{b_{n-1}}} \mapsto a \frac{dt}{t}\wedge
\frac{db_1}{b_1}\wedge\cdots\wedge\frac{db_{n-1}}
{b_{n-1}} \pmod{H^{n+1,n}_{\nr}(F)},$$
where $a\in O_F$ and $b_1,\ldots,b_{n-1}\in O_F^*$.

As in previous arguments, it is straightforward to check
that the resulting element of
$H^{n+1,n}_{\tame}(F)/H^{n+1,n}_{\nr}(F)$ does not depend
on the choice of lifts of $\o{a}\in k$ and $\o{b_1},\ldots,
\o{b_{n-1}}\in k^*$ to $O_F$. For brevity, we just write this out
for $\o{a}$. Namely, changing the lift
of $\o{a}$ changes the element of $H^{n+1,n}(F)$ by an expression of the form
$ct (dt/t)\wedge (db_1/b_1)\wedge\cdots (db_{n-1}/b_{n-1})$ with
$c\in O_F$ and $b_1,\ldots,b_{n-1}\in O_F^*$. We rewrite that
in $H^{n+1,n}(F)$ as:
\begin{align*}
ct\frac{dt}{t}\wedge 
\frac{db_1}{b_1}\wedge\cdots\wedge\frac{db_{n-1}}{b_{n-1}}
&= d\bigg[ ct \frac{db_1}{b_1}\wedge\cdots\wedge\frac{db_{n-1}}{b_{n-1}}\bigg]
-t\, dc\wedge \frac{db_1}{b_1}\wedge\cdots\wedge\frac{db_{n-1}}{b_{n-1}}\\
&= -ct \frac{dc}{c}\wedge
\frac{db_1}{b_1}\wedge\cdots\wedge\frac{db_{n-1}}{b_{n-1}},
\end{align*}
using that exact forms represent zero in $H^{n+1,n}(F)$. If $c$
is in $O_F^*$, then this element is unramified. Using that
$O_F$ is additively generated by $O_F^*$, we find that the element
above is always unramified, as we want. 

Thus we have a well defined function from $k\times (k^*)^{n-1}$
to the quotient group
$H^{n+1,n}_{\tame}(F)/H^{n+1,n}_{\nr}(F)$. It is clearly multilinear,
and so it gives a homomorphism from the abelian group
$k\otimes_{\Z} (k^*)^{\otimes n}$ to the latter quotient group. By
Proposition \ref{katopres}, the homomorphism factors
through $\Omega^{n-1}_k$ if the following elements map to zero:
$$[\o{a},\o{b_1},\ldots,\o{b_{n-1}}\}$$
with $\o{b_i}=\o{b_j}\in k^*$ for some $i\neq j$,
and 
$$[\o{u}+\o{v},\o{u}+\o{v},\o{b_2},\ldots,\o{b_{n-1}}\}
-[\o{u},\o{u},\o{b_2},\ldots,\o{b_{n-1}}\}-[\o{v},\o{v},\o{b_2},\ldots,
\o{b_{n-1}}\}$$
if $\o{u},\o{v},\o{u}+\o{v}\in k^*$. It is easy to check
that these elements map to zero, by choosing suitable
lifts (for example, take $b_i$ to be equal to $b_j$
when $\o{b_i}=\o{b_j}$ for some $i\neq j$).

Thus we have a well-defined homomorphism
from $\Omega^{n-1}_k$ to the quotient group
$H^{n+1,n}_{\tame}(F)/H^{n+1,n}_{\nr}(F)$.
To show that the homomorphism vanishes on exact $(n-1)$-forms,
it suffices to show that each element
of the form $[\o{a},\o{a},\o{b_2},\ldots,\o{b_{n-1}}\}$ maps to zero.
Those elements map to zero by definition of the homomorphism,
using that exact $n$-forms represent zero in $H^{n+1,n}(F)$.
Finally, to show that the homomorphism factors through
the quotient $H^{n,n-1}(k)$ of $\Omega^{n-1}_k$, it suffices
to show that $[\o{a}^p-\o{a},\o{b_1},
\ldots,\o{b_{n-1}}\}$ maps to zero. That holds because
forms $(a^p-a) (dt/t)\wedge(db_1/b_1)\wedge\cdots\wedge(db_{n-1}/b_{n-1})$
represent zero in $H^{n+1,n}(F)$.

Thus we have a well-defined homomorphism $\varphi$ from $H^{n,n-1}(k)$
to the quotient group
$H^{n+1,n}_{\tame}(F)/H^{n+1,n}_{\nr}(F)$. Composing this with
the residue homomorphism from the latter group to $H^{n,n-1}(k)$
(discussed earlier) gives the identity. Therefore, $\varphi$
is an isomorphism
if it is surjective. To prove surjectivity, use that
$H^{n+1,n}_{\tame}(F)$ is generated by elements
$a(db_1/b_1)\wedge\cdots\wedge(db_n/b_n)$ with $a\in O_F$
and $b_1,\ldots,b_n\in F^*$. Since $F^* = t^{\Z}\times O_F^*$,
$H^{n+1,n}_{\tame}(F)$ is in fact generated by elements
$a(db_1/b_1)\wedge\cdots\wedge(db_n/b_n)$ and
$a(dt/t)\wedge(db_2/b_2)\wedge\cdots\wedge(db_n/b_n)$ with $a\in O_F$
and $b_i\in O_F^*$. Elements of the first type are unramified, hence zero
in $H^{n+1,n}_{\tame}(F)/H^{n+1,n}_{\nr}(F)$, and elements of the second
type are in the image of $\varphi$. Thus $\varphi$ is an isomorphism.

Finally, when $F$ is henselian with respect to $v$, we want to show
that $H^{n+1,n}_{\nr}(F)\cong H^{n+1,n}(k)$. Here $H^{n+1,n}_{\nr}(F)$
is the subgroup of $H^{n+1,n}(F)$ generated by elements
of the form
$$a\frac{db_1}{b_1}\wedge\cdots\wedge\frac{db_n}{b_n}$$
with $a\in O_F$ and $b_1,\ldots,b_n\in O_F^*$.
We want to show that the map $H^{n+1,n}(k)\arrow H^{n+1,n}_{\nr}(F)$
given by the formula
$$\o{a}\frac{d\o{b_1}}{\o{b_1}}\wedge\cdots\wedge\frac{d\o{b_n}}
{\o{b_n}} \mapsto a
\frac{db_1}{b_1}\wedge\cdots\wedge\frac{db_n}{b_n},$$
for $a\in O_F$ and $b_1,\ldots,b_n\in O_F^*$,
is defined and an isomorphism.

We first show that given $\o{a}\in k$
and $\o{b_1},\ldots,\o{b_n}\in k^*$, the choice of lifts
to $O_F$ does not affect the right side in $H^{n+1,n}_{\nr}(F)$.
The choice of lift $a$ does not matter, because
every element of $\m\subset O_F$ can be written as $u^p-u$
for some $u\in F$, using that $O_F$ is henselian
\cite[Theorem I.4.2($d'$)]{Milne}.
Next, the choice of lift $b_1$ (say) does not matter,
because for $c_1\neq 0\in \m$ and $b_1=1+c_1$, with elements $a\in O_F$
and $b_2,\ldots,b_n\in O_F^*$,
$$a \frac{d(1+c_1)}{1+c_1}\wedge\frac{db_2}{b_2}\wedge
\cdots\wedge\frac{db_n}{b_n}
=\frac{ac_1}{1+c_1}\; \frac{dc_1}{c_1}\wedge\frac{db_2}{b_2}
\wedge\cdots\wedge\frac{db_n}{b_n},$$
which is zero in $H^{n+1,n}_{\nr}(F)$ because
$ac_1/(1+c_1)$ is in $\m$ and hence can be written
as $u^p-u$ for some $u\in F$.

Using Proposition \ref{katopres}, it follows that the formula above
gives a well-defined map $\Omega^n_k\arrow H^{n+1,n}_{\nr}(F)$.
Finally, using the description of $H^{n+1,n}(k)$ as the cokernel
of $\mathcal{P}\colon \Omega^n_k\arrow \Omega^n_k/d\Omega^{n-1}_k$,
it follows that the formula above gives a well-defined map
$H^{n+1,n}(k)\arrow H^{n+1,n}_{\nr}(F)$. This is surjective
by definition. Injectivity follows from Izhboldin's result
that the composed map to $H^{n+1,n}_{\nr}$ of the completion $F_v$
is an isomorphism \cite[Corollary 2.7]{Izhboldinrat}.
\end{proof}

Finally, we state Izhboldin's calculation of the mod $p$
\etale motivic cohomology of the rational function field
in one variable over any field of characteristic $p$
\cite[Theorem 4.5]{Izhboldinrat}. For example, this result 
gives the $p$-torsion in the Brauer group of $k(t)$, generalizing
the Faddeev exact sequence (which addresses the special
case where $k$ is perfect) \cite[Corollary 6.4.6]{GS}.
Our terminology is slightly different
from Izhboldin's, but the translation is straightforward.

\begin{theorem}
\label{k(t)}
Let $k$ be a field of characteristic $p>0$, and let $n$ be a natural
number. Let $S$ be the set of closed points in $\P^1_k$. For $v\in S$,
write $k(t)_v$ for the completion of the field $k(\P^1)=k(t)$ at $v$. Then:

(1) The natural homomorphism
$$H^{n+1,n}(k(t))\arrow \oplus_{v\in S}
H^{n+1,n}(k(t)_v)/H^{n+1,n}_{\tame}(k(t)_v)$$
is surjective. The wild quotients on the right are described
by Theorem \ref{filtration}.

(2) The kernel of that surjection,
which we call $H^{n+1,n}_{\tame}(k(t))$, fits into
an exact sequence:
$$0\arrow H^{n+1,n}(k)\arrow H^{n+1,n}_{\tame}(k(t))
\arrow \oplus_{v\in S}H^{n,n-1}(k(v))\arrow H^{n,n-1}(k)\arrow 0.$$
Here $k(v)$ denotes the residue field of $\P^1_k$ at a closed point $v$,
and the homomorphism to $H^{n,n-1}(k(v))$ is the residue defined above.
\end{theorem}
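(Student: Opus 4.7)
The plan is to decouple the global problem into local contributions using the local structure theorem (Theorem~\ref{filtration}) together with Kato's presentation (Proposition~\ref{katopres}), and then to glue local data via partial fractions. For Part~(1) I would construct global lifts of given local wild data by exploiting the fact that each closed point $v\in S$ has a uniformizer that extends to a rational function on $\P^1_k$ with $v$ as its only pole: the monic irreducible polynomial $\pi_v(t)\in k[t]$ for finite $v$, and $1/t$ at $v=\infty$. For Part~(2) I would verify exactness at each of the four terms, with the main work in the two middle terms.

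For Part~(1), first observe that any $\alpha\in H^{n+1,n}(k(t))$ has vanishing image in the wild quotient at all but finitely many $v$, since any concrete symbol has poles only at finitely many places. It therefore suffices, given a single target $\alpha_v$, to exhibit a global $\beta_v\in H^{n+1,n}(k(t))$ whose image at $v$ is $\alpha_v$ and which lies in the tame subgroup at every other place. Using Theorem~\ref{filtration}, represent $\alpha_v$ by a form $(c/\pi_v^j)(dg_1/g_1)\wedge\cdots\wedge(dg_n/g_n)$ with $c\in O_{k(t)_v}$, $g_i\in O_{k(t)_v}^*$, and $j\geq 1$. By the Chinese remainder theorem in $k[t]$, approximate $c$ and each $g_i$ to high $\pi_v$-adic precision by polynomials, and substitute these into the same symbolic expression to obtain a rational form $\beta_v\in H^{n+1,n}(k(t))$. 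Because $c/\pi_v^j$ is regular at every finite place $w\neq v$, the form $\beta_v$ lies in $U_0$ at each such $w$, and by construction its image at $v$ equals $\alpha_v$. Any unwanted wild contribution at $\infty$ is then cancelled by adding a correction built from monomials $t^j$, which are precisely the local wild generators at $\infty$.

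For Part~(2), exactness at $H^{n+1,n}(k)$ follows from the retraction $H^{n+1,n}_{\tame}(k(t))\to H^{n+1,n}_{\nr}(k(t)_\infty)\cong H^{n+1,n}(k)$ supplied by Theorem~\ref{filtration}, which is a left inverse to the inclusion from the constant field. Surjectivity at $H^{n,n-1}(k)$ is witnessed by any $k$-rational point $v_0\in S$, where the corestriction $k(v_0)=k\to k$ is the identity. The main content is exactness at the two middle terms. At $H^{n+1,n}_{\tame}(k(t))$ one must show that a tame class with vanishing residue at every closed point is pulled back from $k$; this is proved by induction on the poles of a representing symbol, using the tame partial-fraction strategy of Part~(1) to kill residues one place at a time until only a class defined over $k$ remains. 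At $\oplus_v H^{n,n-1}(k(v))$ one needs both the reciprocity statement that the sum of corestrictions of residues vanishes, and a converse construction producing a tame class with prescribed residues whenever their corestriction-sum is zero. The reciprocity half reduces via Proposition~\ref{katopres} to generators $a(db_1/b_1)\wedge\cdots\wedge(db_n/b_n)$ with $a\in k[t]$, and expanding $a$ in partial fractions shows that the finite-place residues cancel against the residue at $\infty$. The converse construction uses the same partial-fraction recipe as in Part~(1), but with tame symbols only.

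The main obstacle is exactness in the middle of Part~(2), especially the reciprocity law. Unlike the $\Z/l$-case with $l\neq p$, mod $p$ \'etale motivic cohomology is not a cycle module in Rost's sense, so one cannot invoke a standard Gersten-type argument; reciprocity must be checked by hand on generators, which requires delicate bookkeeping of the Kato relations of Proposition~\ref{katopres}, and in particular careful handling of the additivity relation $[u+v,u+v,\ldots\}$ when a polynomial $a\in k[t]$ is expanded in partial fractions. The approximation step in Part~(1) likewise requires care to respect the multilinear structure of the symbols, not merely the additive structure of $k(t)$.
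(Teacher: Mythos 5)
This theorem is not proved in the paper at all: it is quoted verbatim (up to terminology) from Izhboldin \cite[Theorem 4.5]{Izhboldinrat}, with the remark that ``the translation is straightforward.'' So there is no in-paper argument to compare yours against; what you have written is an attempt to reconstruct Izhboldin's proof. Your outline has the right overall shape --- it is the classical Faddeev/Milnor strategy of localizing via partial fractions, using Theorem \ref{filtration} to control each completion, and assembling the four-term sequence from residues and corestrictions --- and Part (1) is essentially workable as you describe it (modulo a slip: the monic irreducible $\pi_v$ has $v$ as its only \emph{zero} among finite places, not its only pole; its pole is at $\infty$, which is exactly why you then need the correction step there).

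The genuine gap is that the two statements carrying all the content of Part (2) are announced rather than proved. Exactness at $H^{n+1,n}_{\tame}(k(t))$ --- that a tame class with vanishing residue at every closed point lies in $H^{n+1,n}(k)$ --- does not follow from ``killing residues one place at a time,'' because that procedure presupposes the very converse construction you list as a separate task at the third term, and even after all finite residues are killed you are left needing to show that a class unramified at every finite place and tame at $\infty$ is constant. In Izhboldin's argument (as in Milnor's computation of $K^M_*(k(t))$) this is handled by introducing a filtration of $H^{n+1,n}(k(t))$ by the degree of the polynomials appearing in the symbols and identifying the associated graded pieces with $\oplus_{\deg v=d}H^{n,n-1}(k(v))$; some device of this kind is unavoidable, and your sketch does not supply it. Likewise the reciprocity law is the other half of the theorem's content: reducing to generators $a\,(db_1/b_1)\wedge\cdots\wedge(db_n/b_n)$ and ``expanding $a$ in partial fractions'' is the right first move, but the residue at a place depends multiplicatively on the factorizations of all the $b_i$ simultaneously, and verifying that the corestrictions sum to zero (including the contribution at $\infty$, with the correct sign and the correct treatment of places where several $b_i$ vanish) is a substantial computation that you have flagged but not performed. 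As it stands the proposal is a correct plan with the hard middle missing, which is precisely why the paper defers to Izhboldin rather than proving the statement.
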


\section{Finite groups}

In this section, we show that the mod $p$ cohomological invariants
for a finite group, viewed as a group scheme
over a field $k$ of characteristic $p$,
are nearly trivial when $k$ is perfect. By contrast,
more general finite group schemes can have richer
mod $p$ cohomological invariants. It would be interesting
to find out how far the results of this section extend
to imperfect fields; see section \ref{symmsect} for the case
of the symmetric groups.

\begin{theorem}
\label{smoothmilnor}
Let $G$ be a smooth affine group over a
field $k$ of characteristic $p>0$. For any $n\geq 0$,
all invariants of $G$ over $k$ with values in $H^{n,n}$
are constant. That is, $\Inv^{n,n}_k(G)=H^{n,n}(k)$.
\end{theorem}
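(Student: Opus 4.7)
The plan is to combine the Blinstein--Merkurjev description (Theorem \ref{bm}) with the observation that a versal torsor for a \emph{smooth} group trivializes over the generic point of its total space. Assume first that $k$ is infinite. Embed $G \hookrightarrow \operatorname{GL}(W)$ and let $V$ be a direct sum of enough copies of $W$ so that $G$ acts freely on some nonempty open $U \subset V$ with a quotient $U/G$; this realizes the hypotheses of Theorem \ref{bm}, since $U$ is $G$-equivariantly open in the linear $G$-representation $V$. Write $E = k(U/G)$, $L = k(U)$, and let $\xi \in H^1(E,G)$ be the class of the versal torsor $U \to U/G$. By Theorem \ref{bm}, evaluation on $\xi$ gives an injection $\Inv^{n,n}_k(G) \hookrightarrow H^{n,n}(E)$.

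The key observation is that the generic point of $U$ provides a morphism $\Spec L \to U$ over $U/G$, yielding a section of $U \times_{U/G} \Spec L \to \Spec L$; hence the restriction of $\xi$ to $L$ is the trivial torsor. By naturality, any $\alpha \in \Inv^{n,n}_k(G)$ therefore satisfies $\alpha_E(\xi)|_L = c_L$ in $H^{n,n}(L)$, where $c = \alpha_k(\text{trivial}) \in H^{n,n}(k)$ is the constant part of $\alpha$.

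The main obstacle, and the crux of the argument, is to prove that the restriction $H^{n,n}(E) \to H^{n,n}(L)$ is injective. Since $G$ is smooth, the morphism $U \to U/G$ is smooth, so the extension $L/E$ is separable (in MacLane's sense); consequently, the conormal sequence $0 \to \Omega^1_E \otimes_E L \to \Omega^1_L \to \Omega^1_{L/E} \to 0$ is split short exact. Taking $n$-th exterior powers gives an injection $\Omega^n_E \hookrightarrow \Omega^n_L$, which restricts to an injection of logarithmic subgroups $\Omega^n_{\log,E} \hookrightarrow \Omega^n_{\log,L}$. Via the identification $H^{n,n}(F) \cong \Omega^n_{\log,F}$ from Section \ref{background}, this is the required injectivity. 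It forces $\alpha_E(\xi) = c_E$, and the injection $\Inv^{n,n}_k(G) \hookrightarrow H^{n,n}(E)$ then forces $\alpha = c$ as invariants. For finite $k$, the conclusion should follow from the infinite case by base changing to an infinite extension $k' \supset k$ and exploiting the compatibility of constant invariants with extension of scalars.
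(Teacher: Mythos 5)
Your argument is correct, but it takes a genuinely different route from the paper's, and a more roundabout one. The paper's proof is three lines: since $G$ is smooth, \emph{every} $G$-torsor $E$ over \emph{every} field $F/k$ trivializes over the separable closure $F_s$, so a normalized invariant $\alpha$ satisfies $\alpha(E)|_{F_s}=0$; and $H^{n,n}(F)\to H^{n,n}(F_s)$ is injective because $H^{n,n}(F)\cong\Omega^n_{\log,F}=H^0_{\Gal}(F,\Omega^n_{\log,F_s})$. You instead apply the same underlying principle --- smoothness produces a separable extension trivializing the torsor, and $H^{n,n}$ injects along separable extensions --- only to the versal torsor: $k(U)/k(U/G)$ is separably generated because $U\to U/G$ is smooth, and your conormal-sequence argument for the injectivity of $\Omega^n_{\log,E}\hookrightarrow\Omega^n_{\log,L}$ is sound. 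What the paper's version buys is that it needs neither the versal-torsor injection (hence no infinite-field hypothesis and no patch at the end) nor Theorem \ref{bm}; in fact you also only use the elementary versality injection $\Inv^{n,n}_k(G)\hookrightarrow H^{n,n}(k(U/G))$, not the full balanced-classes statement. Your closing sentence on finite $k$ is terser than it should be: what is actually needed is injectivity of $H^{n,n}(F)\to H^{n,n}(F\cdot\overline{k})$ for each $F/k$, which does hold (the extension is separable algebraic since finite fields are perfect, so your own separability argument, or the Galois-descent description above, applies), but "compatibility of constant invariants with extension of scalars" is not by itself the point. With that step spelled out, the proof is complete.
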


\begin{proof}
Let $\alpha$ be a normalized invariant for $G$ of degree $(n,n)$.
Let $E$ be any $G$-torsor over a field $F/k$; we want to show
that $\alpha(E)=0$. Since $G_F$ is smooth over $F$,
$E$ becomes trivial over the separable closure $F_s$. So the image
of $\alpha(E)$ in $H^{n,n}(F_s)$ is zero. But $H^{n,n}(F)\arrow
H^{n,n}(F_s)$ is injective, by Bloch and Kato's isomorphism
$H^{n,n}(F)\cong \Omega^n_{\log, F}\subset \Omega^n_F$
(discussed in section \ref{background}). So
$\alpha(E)=0$, as we want.
\end{proof}

In particular, finite groups have no normalized
mod $p$ cohomological invariants
of bidegree $(n,n)$. We now check that this is also true
(over a perfect base field)
in the other possible bidegrees, $(n+1,n)$, except for bidegree $(1,0)$
(which is described in Theorem \ref{degree1}).

\begin{theorem}
\label{h1fingroup}
Let $G$ be a finite group, and let $k$ be a perfect
field of characteristic $p>0$. Then $\Inv^{n+1,n}_{k}(G)=0$
for all $n\geq 1$.
\end{theorem}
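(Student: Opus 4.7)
The plan is to reduce the theorem to the case $G = \Z/p$ in two stages: first from an arbitrary finite group $G$ to a Sylow $p$-subgroup $P$ via a restriction-corestriction argument, and then from $P$ (a $p$-group) to $\Z/p$ by induction on $|P|$ using a normal subgroup of index $p$. The base case $G = \Z/p$ is handled by Proposition \ref{zp}, whose outcome combined with the vanishing of $H^{n,n}(k)$ for $n \geq 1$ gives the conclusion; this vanishing holds because $k$ perfect of characteristic $p$ forces $\Omega^n_k = 0$ for $n \geq 1$ (every element of $k$ is a $p$th power, so every $da$ vanishes), hence also $H^{n,n}(k) = \Omega^n_{\log,k} = 0$.

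For the Sylow reduction, let $\alpha \in \Inv^{n+1,n}_k(G)$ and $E$ a $G$-torsor over a field $F/k$, and let $P \subset G$ be a Sylow $p$-subgroup. The \'etale $F$-algebra $E/P = \coprod_i \Spec F_i$ corresponds to the $G$-set $G/P$, so each $[F_i:F]$ divides $[G:P]$ and is prime to $p$. After lifting a chosen point of $E/P$ back to $E$, the pullback $E_{F_i}$ is induced from a $P$-torsor, and hence by the $p$-group case (Step 2) its value under $\alpha$ vanishes: $\alpha(E)|_{F_i} = 0$. The corestriction-restriction formula then yields
\[
[G:P]\,\alpha(E) = \sum_i \mathrm{cor}_{F_i/F}\bigl(\alpha(E)|_{F_i}\bigr) = 0.
\]
Since $H^{n+1,n}(F)$ is $p$-torsion and $[G:P]$ is prime to $p$, $\alpha(E) = 0$.

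For $G$ a $p$-group, I would induct on $|G|$. Choose a normal subgroup $N \trianglelefteq G$ of index $p$. Given $\alpha \in \Inv^{n+1,n}_k(G)$ and a $G$-torsor $E$ over $F$, let $\bar E = E/N$ be the quotient $\Z/p$-torsor, with Artin-Schreier class $[a] \in H^{1,0}(F)$, and let $F'/F$ be the Artin-Schreier extension (or $F$ itself if $[a] = 0$). Then $\bar E_{F'}$ is trivial, so $E_{F'}$ is induced from an $N$-torsor, and the induction hypothesis yields $\alpha(E)|_{F'} = 0$. The main obstacle is to descend this vanishing to $\alpha(E) = 0$ over $F$, since restriction along Artin-Schreier extensions is not injective. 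I would appeal to Kato's computation of the kernel of restriction, $\ker(H^{n+1,n}(F) \to H^{n+1,n}(F')) = [a] \cdot H^{n,n}(F)$, and write $\alpha(E) = [\bar E] \cdot c(E)$ for some $c(E) \in H^{n,n}(F)$. A functoriality argument should then realize $c$ as a cohomological invariant of $G$ with values in $H^{n,n}$, which by Theorem \ref{smoothmilnor} equals an element of $H^{n,n}(k) = 0$, forcing $\alpha(E) = 0$. The delicate step is making the choice of $c(E)$ functorial; an alternative route is to apply Theorem \ref{bm} directly on a versal $G$-torsor $U \to U/G$, noting that $\alpha$ applied to the generic torsor restricts trivially to $k(U)$ (the pulled-back torsor is trivial, and the constant part of $\alpha$ sits in $H^{n+1,n}(k) = 0$), and then extracting $\alpha = 0$ from the balanced condition.
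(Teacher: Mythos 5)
Your overall strategy (Sylow reduction, then induction on a $p$-group through a normal subgroup of index $p$) is reasonable in outline, and the base case and the restriction--corestriction step are essentially sound: for perfect $k$ and $n\geq 1$ one has $\Omega^n_k=0$, hence $H^{n,n}(k)=0$ and also $H^{n+1,n}(k)=0$ (you need this second vanishing too, since Proposition \ref{zp} contributes an $H^{n+1,n}(k)$ summand), and corestriction along the separable factors $F_i/F$ does exist with $\mathrm{cor}\circ\mathrm{res}=[F_i:F]$. But the inductive step contains a genuine gap exactly where you flag it, and neither of your two proposed fixes closes it. Knowing $\alpha(E)\in[\bar E]\cdot H^{n,n}(F)$ only determines $c(E)$ up to the annihilator of $[\bar E]$ in $H^{n,n}(F)$, which is large and varies with $E$ and $F$; there is no canonical choice, so $c$ is not a cohomological invariant and Theorem \ref{smoothmilnor} cannot be applied to it. The alternative route cannot work as stated for a structural reason: the two inputs you use for the normalized part --- that $\alpha(\xi)$ pulls back to zero in $H^{n+1,n}(k(U))$ and that $\alpha(\xi)$ is balanced --- hold for every normalized invariant of every finite group over \emph{any} field of characteristic $p$, with no use of perfectness. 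If they implied $\alpha=0$, then $\NormInv^{n+1,n}_k(\Z/p)$ would vanish for all $k$, contradicting Proposition \ref{zp}, which exhibits the nonzero invariants $x\mapsto xw$ for $w\in H^{n,n}(k)\neq 0$ when $k$ is imperfect. The pullback $H^{n+1,n}(k(U/G))\to H^{n+1,n}(k(U))$ along a $p$-power-degree extension is far from injective, and balancedness does not repair this.

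The paper's proof is entirely different and much shorter: since $G$ is a finite (constant) group, one can take the versal torsor $U\to U/G$ to be defined over $\F_p$, and the Frobenius morphism of $U/G$ pulls this torsor back to an isomorphic torsor. Because $k$ is perfect, the base change of Frobenius is (up to an automorphism) the absolute Frobenius of $(U/G)_k$, which sends every unit of $k(U/G)$ into $p$th powers and therefore acts by zero on $H^{n+1,n}(k(U/G))$ for $n\geq 1$ (every $db/b$ pulls back to $d(c^p)/c^p=0$). Hence $\alpha(\xi)=F_1^*\alpha(\xi)=0$, and versality gives $\alpha=0$. This handles all finite groups at once, with no Sylow reduction and no induction, and it makes transparent where perfectness of $k$ enters. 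If you want to salvage your inductive approach, the missing ingredient is precisely a descent statement along Artin--Schreier extensions at the level of invariants rather than of individual classes, and that is substantially harder than the Frobenius argument.
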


\begin{proof}
Let $V$ be a faithful representation of $G$ over $\F_p$ (for example,
the regular representation). Then
the open subset $U$ of $V$ on which $G$ acts freely is nonempty,
and there is a quotient variety $U/G$ over $\F_p$. Consider the
Frobenius morphism $F\colon U/G\arrow U/G$, which is a morphism
over $\F_p$. The pullback by $F$ of the $G$-torsor $U\arrow U/G$
is isomorphic to the same $G$-torsor, since the commutative
diagram
$$\xymatrix@C-10pt@R-10pt{
U \ar[r]_F\ar[d] & U\ar[d] \\
U/G \ar[r]_F & U/G
}$$
is a pullback of $G$-torsors. (This uses that the elements of $G=G(\F_p)$
give $\F_p$-automorphisms of $U$, which therefore commute
with Frobenius.)

Let $F_1\colon (U/G)_k\arrow (U/G)_k$ be the base change to $k$
of the morphism $F$ over $\F_p$. It follows from the previous paragraph
that the pullback by $F_1$ of the $G$-torsor $U_k\arrow (U/G)_k$
is isomorphic to the same $G$-torsor. Since $k$ is perfect,
we can also identify $F_1$ (after an automorphism of the scheme
$(U/G)_k$) with the absolute Frobenius morphism on $(U/G)_k$. As a result,
the pullback $F_1^*\colon k(U/G)\arrow k(U/G)$ sends
$k(U/G)^*$ into $(k(U/G)^*)^p$. Therefore, $F_1^*$ acts by
zero on $H^{n+1,n}(F)$ for all $n\geq 1$, by the interpretation
in terms of differential forms (section \ref{background}):
the pullback of a form
$db/b$ is of the form $d(c^p)/c^p=0$.

As a result, every invariant for $G$-torsors in $H^{n+1,n}$ with
$n\geq 1$ is zero in $H^{n+1,n}(k(U/G))$. Since the $G$-torsor
over $U/G$ is versal (section \ref{background}),
it follows that every such invariant is zero.
\end{proof}

\section{Invariants of $\mu_p$}

In this section, we use Theorem \ref{filtration}
to compute the cohomological invariants
of the group scheme $\mu_p$ of $p$th roots of unity
over any field of characteristic $p$. More generally,
we find the invariants for the product of $\mu_p$ with any group scheme.

The invariants for $(\mu_p)^r$ with values in $H^{n,n}$ were computed
by Vial, as part of his determination of the operations
on Milnor $K$-theory of fields \cite[Theorem 3.4]{Vial}:

\begin{theorem}
\label{vialops}
Let $k$ be a field of characteristic $p>0$, and let $n$ and $r$
be natural numbers. Then
$$\Inv^{n,n}_k((\mu_p)^r)\cong
\oplus_{I\subset \{1,...,r\}} H^{n-|I|,n-|I|}(k).$$
In more detail, every invariant for $(\mu_p)^r$ over $k$
with values in $H^{n,n}$ has the form
$$u(\alpha_1,\ldots,\alpha_r)=\sum_{I\subset \{1,\ldots,r\}}
c_I\prod_{i\in I}\alpha_i$$
for some elements $c_I\in H^{n-|I|,n-|I|}(k)$. Here
$\alpha_1,\ldots,\alpha_r$ are $\mu_p$-torsors
over a field extension $E/k$, and
we use the identification $H^1(E,\mu_p)\cong H^{1,1}(E)$.
\end{theorem}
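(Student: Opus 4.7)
My plan is to apply Theorem \ref{bm} to the coordinate representation: take $V=A^r_k$ with the $i$-th factor of $(\mu_p)^r$ acting by multiplication on the $i$-th coordinate, and let $U\subset V$ be the free locus, namely the open subscheme where every coordinate is nonzero. Then $U/G$ is identified with the same open subscheme via the coordinatewise $p$-th power map $t_i\mapsto s_i:=t_i^p$, with function field $k(s_1,\ldots,s_r)$. The versal torsor attaches to the $i$-th factor of $G$ the Kummer class $\{s_i\}\in H^{1,1}(k(s_1,\ldots,s_r))$. Theorem \ref{bm} then gives
$$\Inv^{n,n}_k((\mu_p)^r)\cong H^{n,n}(k(s_1,\ldots,s_r))_{\bal},$$
so the task reduces to identifying the balanced subgroup with $\oplus_I H^{n-|I|,n-|I|}(k)\cdot\prod_{i\in I}\{s_i\}$.

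For the easy inclusion, the function field of $(U\times U)/G$ is obtained from $k(s_1,\ldots,s_r,s_1',\ldots,s_r')$ by adjoining elements $\rho_i:=t_i/t_i'$ satisfying $\rho_i^p=s_i/s_i'$; eliminating $s_i=\rho_i^ps_i'$, this field is purely transcendental, equal to $k(s_1',\ldots,s_r',\rho_1,\ldots,\rho_r)$. The two projections send $s_i$ to $\rho_i^ps_i'$ and to $s_i'$, respectively. Since $\{\rho_i^ps_i'\}=p\{\rho_i\}+\{s_i'\}=\{s_i'\}$ in $H^{1,1}$, expanding $\prod_{i\in I}\{\rho_i^ps_i'\}$ by multilinearity yields $\prod_{i\in I}\{s_i'\}$, so both pullbacks of $c_I\prod_{i\in I}\{s_i\}$ agree and the proposed elements are balanced.

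For the converse, I would use the Milnor exact sequence
$$0\arrow K_n^M(F)/p\arrow K_n^M(F(s))/p\arrow \oplus_v K_{n-1}^M(F(v))/p\arrow 0$$
(valid in every characteristic) iteratively in the variables $s_1,\ldots,s_r$ to decompose an arbitrary $\omega\in K_n^M(k(s_1,\ldots,s_r))/p$ into a constant part plus residue terms indexed by multi-indices of closed points. A residue of $\omega$ at a closed point of the form $s_i=a$ with $a\neq 0$ pulls back via the two projections to residues supported on the distinct divisors $\rho_i^ps_i'=a$ and $s_i'=a$ of $(U\times U)/G$; matching residues under the balanced condition thus forces all such nontrivial residues to vanish. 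What survives are the residues at $s_i=0$, which contribute exactly the terms $c_I\prod_{i\in I}\{s_i\}$ with $c_I\in K_{n-|I|}^M(k)/p=H^{n-|I|,n-|I|}(k)$, and injectivity of the candidate map follows from uniqueness of the iterated Milnor splitting.

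The main obstacle is the multivariable residue bookkeeping: tracking how iterated residues transform under all the substitutions $s_i\mapsto\rho_i^ps_i'$ simultaneously and verifying that the balanced condition truly kills every residue away from $s_i=0$. A cleaner alternative is induction on $r$, with base case $r=1$ provided by Proposition \ref{mup}: for general $r$, specialize $\alpha_r$ to the Kummer class $\{t_r\}$ over $k(t_r)$, apply the inductive hypothesis to the resulting invariant of $(\mu_p)^{r-1}$ over $k(t_r)$, and observe that each coefficient in that expansion is itself a cohomological invariant of $\mu_p$ over $k$, to which the base case then applies to yield the formula claimed.
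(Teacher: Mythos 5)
The paper offers no proof of this statement: it is imported verbatim from Vial, so there is nothing internal to compare against, and your argument has to stand on its own. It essentially does. The setup is right: $U=(G_m)^r$ is exactly the free locus in $V=A^r_k$, the quotient $U/G$ is again $(G_m)^r$ via $s_i=t_i^p$, and $k((U\times U)/G)=k(s_1',\ldots,s_r',\rho_1,\ldots,\rho_r)$ with the two projections sending $s_i$ to $\rho_i^p s_i'$ and to $s_i'$; so by Theorem \ref{bm} balancedness is the identity $\omega(\rho_1^ps_1',\ldots)=\omega(s_1',\ldots)$ in $K^M_n/p$ of a purely transcendental field, and your residue computation is correct: for a closed point $v\ne 0$ of $A^1$ in the variable $s_r$, the divisor $V(\pi_v(s_r'))$ dominates $U/G$ under the first projection (the image of $\rho_r^ps_r'$ in its residue field is transcendental over $F(v)$), so the first pullback has zero residue there while the second has residue $\partial_v\omega$ base-changed along the purely transcendental extension $F(v)\subset F(v)(\rho_1,\ldots,\rho_r,\ldots)$, and injectivity of $K^M_*/p$ under such extensions kills $\partial_v\omega$. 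The one step you gesture at but must actually write down is why the coefficients of the resulting Milnor splitting $\omega=c_\emptyset+\{s_r\}c_{\{r\}}$ over $F=k(s_1,\ldots,s_{r-1})$ are again balanced in the remaining variables: specialize the balanced identity at $\rho_r=1$ and at $\rho_1=\cdots=\rho_{r-1}=1$ respectively (both sides are unramified along those loci) and invoke uniqueness of the splitting; this is the hinge that makes the induction on $r$ close up, and it is where the ``bookkeeping'' actually lives.

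Two corrections. First, your ``cleaner alternative'' has a false base case: Proposition \ref{mup} computes $\Inv^{n+1,n}_k(\mu_p)$, not $\Inv^{n,n}_k(\mu_p)$; the $r=1$, $H^{n,n}$ case is itself an instance of the theorem you are proving (it is what the paper quotes from Vial in the later section on $H^{n,n}$-invariants), so the product-decomposition induction in the style of Proposition \ref{mupprodnn} only reduces $r>1$ to $r=1$ and cannot replace the residue argument. Second, Theorem \ref{bm} as stated requires $k$ infinite, so a finite base field needs a separate (routine) reduction, e.g.\ by passing to $k(x)$ and using injectivity of $H^{n,n}(k)\arrow H^{n,n}(k(x))$.
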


We now find the invariants of $\mu_p$ with values in $H^{n+1,n}$.

\begin{proposition}
\label{mup}
Let $k$ be a field of characteristic $p>0$. Then
$$\Inv^{n+1,n}_k(\mu_p)\cong H^{n+1,n}(k)\oplus H^{n,n-1}(k).$$
In more detail, every invariant
for $\mu_p$ over $k$ with values in $H^{n+1,n}$ has the form
$$u(\beta)=v+w\beta$$
for some $v\in H^{n+1,n}(k)$ and $w\in H^{n,n-1}(k)$. Here $\beta$
denotes any $\mu_p$-torsor over a field extension $E/k$, and
we use the identification $H^1(E,\mu_p)\cong H^{1,1}(k)$.
\end{proposition}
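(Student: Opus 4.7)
The plan is to apply Theorem \ref{bm} to the standard versal torsor for $\mu_p$: take $V=A^1_k$ with $\mu_p$ acting by scalar multiplication, and $U=V\setminus\{0\}$. Then $U/\mu_p$ is $A^1\setminus\{0\}$ with coordinate $s=t^p$, where $t$ is the coordinate on $U$. The universal torsor over $k(s)$ corresponds to $\{s\}\in H^{1,1}(k(s))$. One computes that $(U\times U)/\mu_p$ has function field $k(y,s_2)$ where $y=t_1/t_2$ and $s_2=t_2^p$, and the two projections induce field maps $\phi_1,\phi_2\colon k(s)\to k(y,s_2)$ with $\phi_1(s)=y^ps_2$ and $\phi_2(s)=s_2$. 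So by Theorem \ref{bm}, $\Inv^{n+1,n}_k(\mu_p)\cong H^{n+1,n}(k(s))_{\bal}$, and the task is to identify this with $H^{n+1,n}(k)\oplus\{s\}\cdot H^{n,n-1}(k)$.

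Construction and injectivity of the map $(v,w)\mapsto v+\{s\}\cdot w$ are easy. Since $y^p\in k(y,s_2)^{*p}$, we have $\phi_1^*\{s\}=\{y^ps_2\}=\{s_2\}=\phi_2^*\{s\}$ in $H^{1,1}(k(y,s_2))$, so $v+\{s\}\cdot w$ is balanced for every $v\in H^{n+1,n}(k)$ and $w\in H^{n,n-1}(k)$. Injectivity follows by taking the tame residue at $s=0$: by Theorem \ref{filtration}, $\partial_0(\{s\}\cdot w)=w$ while $\partial_0 v=0$.

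The main work is surjectivity. Let $\alpha\in H^{n+1,n}(k(s))_{\bal}$. By Theorem \ref{bm}, $\alpha$ is unramified at every closed point of $A^1\setminus\{0\}$. The central step, which I expect to be the main obstacle, is showing that $\alpha$ is also tame at $s=0$ and $s=\infty$. The key observation is that $\phi_1=\sigma\circ\phi_2$, where $\sigma$ is the $k(y)$-automorphism of $k(y,s_2)$ sending $s_2$ to $y^ps_2$; balancing then forces the image $\phi_2^*\alpha\in H^{n+1,n}(k(y)((s_2)))$ to be $\sigma$-invariant. Using the explicit isomorphism $\Omega^n_{k(y)}\cong U_j/U_{j-1}$ of Theorem \ref{filtration} (for $p\nmid j$), or the analogous extension of $\Omega^{n-1}_{k(y)}/Z^{n-1}_{k(y)}$ by $\Omega^n_{k(y)}/Z^n_{k(y)}$ (for $p\mid j$), the automorphism $\sigma$ acts on each graded piece by multiplication by $y^{-pj}$, since $\sigma(s_2^{-j})=y^{-pj}s_2^{-j}$. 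For $j\geq 1$ the only $\sigma$-invariant element of each graded piece is zero: in the case $p\nmid j$ this is because $y^{-pj}-1\neq 0$ in $k(y)$ and $\Omega^n_{k(y)}$ is a torsion-free $k(y)$-module, while in the case $p\mid j$ it follows from the identity $d((y^{-pj}-1)\omega)=(y^{-pj}-1)d\omega$ (valid since $y^{-pj}=(y^{-j})^p$ is annihilated by $d$), which forces $d\omega=0$ whenever $(y^{-pj}-1)\omega$ is closed, and similarly for the $\Omega^{n-1}/Z^{n-1}$ summand. Iterating down the filtration yields $\alpha\in U_0$, i.e., $\alpha$ is tame at $s=0$, and the argument at $s=\infty$ is symmetric.

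Once tameness is established, Theorem \ref{k(t)}(2) concludes the argument: since $\alpha$ is tame at $0,\infty$ and unramified at every other closed point of $\P^1_k$, its residues $\partial_0\alpha,\partial_\infty\alpha\in H^{n,n-1}(k)$ satisfy the single relation $\partial_0\alpha+\partial_\infty\alpha=0$ from the Faddeev-type sequence. Setting $w=\partial_0\alpha$ and noting $\partial_\infty(\{s\}\cdot w)=-w$, the class $\alpha-\{s\}\cdot w$ has trivial residues at both $0$ and $\infty$, hence by Theorem \ref{k(t)}(2) is a constant $v\in H^{n+1,n}(k)$. Translating via the universal torsor $\{s\}\in H^{1,1}(k(s))$ yields the claimed formula $u(\beta)=v+w\beta$ for every $\mu_p$-torsor $\beta$ over any extension field of $k$.
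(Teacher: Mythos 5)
Your proof is correct, and it reaches the same two pillars as the paper (Theorem \ref{bm} to reduce to balanced classes on $k(s)$, then Theorem \ref{k(t)} for the endgame), but the central step --- tameness of the versal class at $0$ and $\infty$ --- is organized genuinely differently. The paper writes $u(\{t\})$ as an explicit sum of differential symbols, substitutes $t=x^p y$ and $t=y$, and compares the two pullbacks along the divisor $x=0$ (your $y=0$): one pullback is trivially unramified there, while the other has its wild ramification amplified from level $j$ to level $pj$; since $p\mid pj$, vanishing in $U_{pj}/U_{pj-1}$ forces closedness conditions on residue forms over $k(y)$, which are then unpacked to contradict $u(\{t\})\notin U_{j-1}$. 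You instead compare along the other divisor $s_2=0$, noting that the two pullbacks differ by the automorphism $\sigma\colon s_2\mapsto y^p s_2$, which acts on each graded piece $U_j/U_{j-1}$ at $s_2=0$ by multiplication by $y^{-pj}\neq 1$; this eigenvalue computation kills every $\sigma$-invariant class in positive filtration at once, treating $p\nmid j$ and $p\mid j$ almost uniformly and avoiding explicit symbol representatives. Two small points you should make explicit: (i) multiplication by $y^{-pj}$ is well defined on $\Omega^n_{k(y)}/Z^n_{k(y)}$ precisely because $y^{-pj}$ is a $p$th power, hence preserves closed forms and commutes with $d$ --- this is what makes both halves of your eigenvalue argument run; and (ii) to pass from ``$\phi_2^*\alpha$ is tame at $s_2=0$'' back to ``$\alpha$ is tame at $s=0$'' you need that $k(s)\subset k(y,s_2)$ is unramified at $s_2=0$ with residue extension $k\subset k(y)$, so that the induced maps on graded pieces $\Omega^n_k\to\Omega^n_{k(y)}$ and $\Omega^n_k/Z^n_k\to\Omega^n_{k(y)}/Z^n_{k(y)}$ are injective. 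Both facts are easy, and with them your argument is complete.
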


\begin{proof}
Let $u$ be an invariant for $\mu_p$ over $k$ with values
in $H^{n+1,n}$.
Let $\{t\}$ denote the $\mu_p$-torsor over the field $k(t)$ associated
to $t\in k(t)^*/(k(t)^*)^p\cong H^1(k(t),\mu_p)$. Then $u$ gives
an element $u(\{t\})\in H^{n+1,n}(k(t))$. Here $\{t\}$ is a versal
torsor for $\mu_p$, ccrresponding to the $\mu_p$-torsor
$U\arrow U/\mu_p$ where $U=A^1_k-0$; so $u$ is determined
by the element $u(\{t\})$ in $H^{n+1,n}(k(t))$.

We know that $u(\{t\})$ is unramified on $U/\mu_p\cong A^1_k-0$ by Theorem
\ref{bm}. Let us show that it is also tamely ramified at
$t=0$ in $\P^1_k$; the same argument gives that $u(\{t\})$
is tamely ramified at $t=\infty$.
If $u(\{t\})$ is not tamely ramified
at $t=0$, then $u(\{t\})$ is in
$U_j-U_{j-1}$ for some $j>0$, with respect to the valuation
$t=0$ on $k(t)$, in the notation of section
\ref{ramsection}. Suppose first that $p\nmid j$;
then, by Theorem \ref{filtration}, we can write
$$u(\{t\})=\sum \frac{a}{t^j}\frac{db_1}{b_1}\wedge\cdots\wedge
\frac{db_n}{b_n}\pmod{U_{j-1}}$$
with $a$ in the local ring $O_{A^1,0}$
and $b_1,\ldots,b_n\in O_{A^1,0}^*$. (The expression
is meant to indicate a finite sum with 
$a,b_1,\ldots,b_n$ denoting different functions in each term.)

We know that $u(\{t\})$ is balanced, meaning that its pullback
by the two morphisms $(U\times U)/\mu_p\arrow U/\mu_p$
are equal (Theorem \ref{bm}). We can identify
the function field of $(U\times U)/\mu_p$ with the rational
function field $k(x,y)$, and balancedness
means that $u(\{x^py\})=u(\{y\})$. (This is clear directly, since
$x^py$ and $y$ define isomorphic $\mu_p$-torsors over $k(x,y)$.)
So we must have
$$\sum \frac{a(x^py)}{x^{pj}y^j}\frac{db_1(x^py)}{b_1(x^py)}\wedge\cdots\wedge
\frac{db_n(x^py)}{b_n(x^py)}=
\sum \frac{a(y)}{y^j}\frac{db_1(y)}{b_1(y)}\wedge\cdots\wedge
\frac{db_n(y)}{b_n(y)}$$
in $H^{n+1,n}(k(x,y))$.
The element on the right is clearly unramified along the divisor
$x=0$ in $A^2_k=\Spec\, k[x,y]$, and so the element on the left
is also unramified along $x=0$. That element is visibly
in $U_{pj}$ with respect to the valuation $x=0$, and so its
class in $U_{pj}/U_{pj-1}$ must be zero. Since the residue
field for that valuation on $k(x,y)$ is $k(y)$,
Theorem \ref{filtration} gives that the form
$$\sum \frac{a(0)}{y^j}\frac{db_1(0)}{b_1(0)}\wedge\cdots\wedge
\frac{db_n(0)}{b_n(0)}$$
in $\Omega^n_{k(y)}$ is {\it closed}. That is,
$$0=\sum \frac{1}{y^j}da(0)\wedge \frac{db_1(0)}{b_1(0)}\wedge\cdots\wedge
\frac{db_n(0)}{b_n(0)}
-j\sum \frac{a(0)}{y^{j+1}}dy\wedge\frac{db_1(0)}{b_1(0)}\wedge\cdots\wedge
\frac{db_n(0)}{b_n(0)}$$
in $\Omega^{n+1}_{k(y)}$. The differential forms on $k(y)$ are easy
to describe:
$$\Omega^{n+1}_{k(y)}\cong k(y)\otimes_k \Omega^{n+1}_k
\oplus dy\cdot k(y)\otimes_k \Omega^n_k.$$
So both sums in the expression above must be zero.
Since we are assuming that $p\nmid j$, it follows that both
$\sum da(0)\wedge (db_1(0)/b_1(0))\wedge \cdots$ in $\Omega^{n+1}_k$
and $\sum a(0) (db_1(0)/b_1(0))\wedge\cdots$ in $\Omega^n_k$
are zero. The second statement means that the element
$u((t))\in U_j=U_j(k(t))$ is actually in $U_{j-1}$, contradicting our 
assumption.

Now suppose that $u(\{t\})$ is in $U_j-U_{j-1}$ (with respect
to the valuation $t=0$ on $k(t)$) with
$j>0$ and $p|j$. Because $k(t)^*=t^{\Z}\times O_{A^1,0}^*$, we can
write $u(\{t\})$ as a sum of two types of terms:
$$u(\{t\})=
\sum \frac{a}{t^j}\frac{db_1}{b_1}\wedge\cdots\wedge
\frac{db_n}{b_n}+
\sum \frac{e}{t^j}\frac{dt}{t}\frac{dc_1}{c_1}\wedge\cdots\wedge
\frac{dc_n}{c_{n-1}}$$
with $a(t)$ and $e(t)$ in $O_{A^1,0}$ and $b_i(t)$ and $c_i(t)$
in $O_{A^1,0}^*$.

As in the previous argument, the elements $x^py$ and $y$
in $k(x,y)^*$ determine isomorphic $\mu_p$-torsors
over $k(x,y)$, and so the pullbacks of $u(\{t\})$ to
$H^{n+1,n}(k(x,y))$ by $t=y$ and $t=x^py$ must be equal.
The first pullback is clearly unramified along the divisor
$x=0$ in $A^2_k=\Spec\, k[x,y]$, and so the second pullback must
also be. That is,
$$\sum \frac{a(x^py)}{x^{pj}y^j}\frac{db_1(x^py)}{b_1}(x^py)\wedge\cdots\wedge
\frac{db_n(x^py)}{b_n(x^py)}+
\sum \frac{e(x^py)}{x^{pj}y^j}\frac{dy}{y}\wedge\frac{dc_1(x^py)}{c_1(x^py)}
\wedge\cdots\wedge
\frac{dc_{n-1}(x^py)}{c_{n-1}(x^py)}$$
in $H^{n+1,n}(k(x,y))$ is unramified along $x=0$. It is visibly
in $U_{pj}$ with respect to the valuation $x=0$, and so its class
in $U_{pj}/U_{pj-1}$ must be zero. By Theorem \ref{filtration},
this means that the form
$$\sum \frac{a(0)}{y^j}\frac{db_1(0)}{b_1}(0)\wedge\cdots\wedge
\frac{db_n(0)}{b_n(0)}+
\sum \frac{e(0)}{y^j}\frac{dy}{y}\wedge\frac{dc_1(0)}{c_1(0)}
\wedge\cdots\wedge
\frac{dc_{n-1}(0)}{c_{n-1}(0)}$$
in $\Omega^n_{k(y)}$ is closed. That is, using that $p|j$,
$$0=\sum \frac{1}{y^j}da(0)\wedge\frac{db_1(0)}{b_1}(0)\wedge\cdots\wedge
\frac{db_n(0)}{b_n(0)}+
\sum \frac{1}{y^j}de(0)\wedge\frac{dy}{y}\wedge\frac{dc_1(0)}{c_1(0)}
\wedge\cdots\wedge
\frac{dc_{n-1}(0)}{c_{n-1}(0)}$$
in $\Omega^{n+1}_{k(y)}$. Since
$$\Omega^{n+1}_{k(y)}\cong k(y)\otimes_k \Omega^{n+1}_k
\oplus dy\cdot k(y)\otimes_k \Omega^n_k,$$
it follows that the form $\sum da(0)\wedge (db_1(0)/b_1(0))\wedge\cdots$
is zero in $\Omega^{n+1}_k$ and $\sum de(0)\wedge (dc_1(0)/c_1(0))
\wedge\cdots$ is zero in $\Omega^n_k$. That is,
$\sum a(0)\wedge (db_1(0)/b_1(0))\wedge\cdots$
in $\Omega^{n}_k$ is closed, and $\sum e(0)\wedge (dc_1(0)/c_1(0))
\wedge \cdots$ in $\Omega^{n-1}_k$ is closed. Since $p|j$, this says
exactly (by Theorem \ref{filtration}) that the element
$$u(\{t\})=
\sum \frac{a}{t^j}\frac{db_1}{b_1}\wedge\cdots\wedge
\frac{db_n}{b_n}+
\sum \frac{e}{t^j}\frac{dt}{t}\frac{dc_1}{c_1}\wedge\cdots\wedge
\frac{dc_{n-1}}{c_{n-1}}$$
in $H^{n+1,n}(k(t))$ is zero in $U_j/U_{j-1}$, contradicting
our assumption.

Thus we have shown that $u(\{t\})$ in $H^{n+1}(k(t))$ is tamely
ramified at $t=0$ in $\P^1_k$. By the same argument, it is tamely
ramified at $t=\infty$. By Theorem \ref{k(t)},
the subgroup of elements of $H^{n+1,n}(k(t))$ that are 
unramified on $A^1_k-0$ and tamely ramified at 0 and $\infty$
is isomorphic to $H^{n+1,n}(k)\oplus H^{n,n-1}(k)$. Thus
$\Inv^{n+1,n}_k(\mu_p)$ injects into that direct sum. Since
we already know invariants for $\mu_p$ that give all elements
of that direct sum, we have
$$\Inv^{n+1,n}_k(\mu_p)=H^{n+1,n}(k)\oplus H^{n,n-1}(k).$$
\end{proof}

\begin{proposition}
\label{mupprod}
Let $H$ be an affine group scheme of finite type over a field $k$.
Then
$$\Inv^{n+1,n}_k(\mu_p\times H)\cong \Inv_k^{n+1,n}(H)\oplus
\Inv_k^{n,n-1}(H).$$
The isomorphism sends invariants $a$ and $b$ for $H$ (with $a$ of 
bidegree $(n+1,n)$ and $b$ of bidegree $(n,n-1)$) to 
$a+b\{t\}$, where $\{t\}$ is the obvious invariant
for $\mu_p$ of bidegree $(1,1)$, coming from the isomorphism
$H^1(F,\mu_p)\cong H^{1,1}(F)$ for fields $F$ over $k$.
\end{proposition}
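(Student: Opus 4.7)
The plan is to define a map
$$\Phi\colon \Inv^{n+1,n}_k(H)\oplus \Inv^{n,n-1}_k(H)\arrow \Inv^{n+1,n}_k(\mu_p\times H)$$
by $\Phi(a,b)(\alpha,\beta):=a(\beta)+b(\beta)\cdot\alpha$, where $\alpha\in H^1(F,\mu_p)$ is identified with its image in $H^{1,1}(F)$ and the product is cup product, and then to show that $\Phi$ is bijective. Naturality of $\Phi(a,b)$ in the field $F/k$ and in the pair of torsors is immediate from that of $a$, $b$, and cup product, so $\Phi(a,b)$ is indeed an invariant.

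For injectivity, suppose $\Phi(a,b)=0$. Plugging in the trivial $\mu_p$-torsor for $\alpha$ forces $a=0$. To conclude $b=0$, I would fix $F/k$ and $\beta\in H^1(F,H)$, pull back to the rational function field $F(t)$, and observe that $b(\beta)_{F(t)}\cdot\{t\}\in H^{n+1,n}(F(t))$ lies in the tame subgroup $U_0$ at the valuation $t=0$ with residue $b(\beta)\in H^{n,n-1}(F)$ by Theorem \ref{filtration}; this residue must vanish, giving $b(\beta)=0$.

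For surjectivity, take $u\in \Inv^{n+1,n}_k(\mu_p\times H)$ and set $a(\beta):=u(0,\beta)$, where $0$ denotes the trivial $\mu_p$-torsor; this defines an invariant for $H$ of bidegree $(n+1,n)$, and $v:=u-\Phi(a,0)$ satisfies $v(0,\beta)=0$ for every $\beta$. For each $F/k$ and each $\beta\in H^1(F,H)$, the rule $E\mapsto \bigl(\alpha\mapsto v(\alpha,\beta_E)\bigr)$ on field extensions $E/F$ is a normalized cohomological invariant for $\mu_p$ of bidegree $(n+1,n)$ over the base field $F$. Proposition \ref{mup}, applied over $F$, then yields a unique $b(\beta)\in H^{n,n-1}(F)$ with $v(\alpha,\beta_E)=b(\beta)_E\cdot\alpha$ for every extension $E/F$ and every $\alpha\in H^1(E,\mu_p)$.

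The main obstacle is to verify that $\beta\mapsto b(\beta)$ is functorial in $F$, hence is itself a cohomological invariant for $H$. Given $F\subset F'$ and $\beta\in H^1(F,H)$, set $\beta':=\beta_{F'}$; for every $\alpha\in H^1(F'(t),\mu_p)$, both $b(\beta)_{F'(t)}\cdot\alpha$ and $b(\beta')_{F'(t)}\cdot\alpha$ compute $v(\alpha,\beta_{F'(t)})$, so $(b(\beta')-b(\beta)_{F'})_{F'(t)}\cdot\alpha=0$. Specializing to $\alpha=\{t\}$ and taking the residue at $t=0$ via Theorem \ref{filtration} yields $b(\beta')=b(\beta)_{F'}$ in $H^{n,n-1}(F')$, so $b\in\Inv^{n,n-1}_k(H)$. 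By construction, $u=\Phi(a,b)$, which completes the proof.
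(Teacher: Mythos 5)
Your proof is correct and follows essentially the same route as the paper: fix the $H$-torsor $\beta$, apply Proposition \ref{mup} over its field of definition to the resulting invariant of $\mu_p$, and use the uniqueness of the coefficients to see that they assemble into invariants of $H$. Your explicit residue computations (for injectivity and for functoriality of $\beta\mapsto b(\beta)$) are just an unpacking of that uniqueness statement, which the paper invokes directly.
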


\begin{proof}
Let $V$ be a $k$-vector space on which $H$ acts by affine
transformations, and suppose that $H$ acts freely
on a nonempty open subset $U$ of $V$ and the quotient
scheme $U/H$ exists. (Such pairs $(V,U)$ do exist
\cite[Remark 2.7]{Totarobook}.)

Let $u$ be an invariant of $\mu_p\times H$ over $k$ with values
in $H^{n+1,n}$. For any field $L$ over $k$ and any $H$-torsor
$\beta$ over $L$, we get an invariant $u_{\beta}$ of $\mu_p$
over $L$ with values in $H^{n+1,n}$ by defining
$$u_{\beta}(\alpha)=u(\alpha,\beta)$$
for any $\mu_p$-torsor $\alpha$ over an extension field
of $L$. By Proposition \ref{mup}, there are unique elements
$v\in H^{n+1,n}(L)$ and $w\in H^{n,n-1}(L)$ such that
$u_{\beta}(\alpha)=v+w\alpha$ for all $\mu_p$-torsors
$\alpha$ over fields over $L$. Here we are identifying
$H^1(E,\mu_p)$ with $H^{1,1}(E)$, for fields $E$ over $L$.

By that uniqueness, $v$ and $w$ are invariants of $H$-torsors
$\beta$ on fields over $k$. These invariants satisfy (and are
characterized uniquely by): for every $(\mu_p\times H)$-torsor
$(\alpha,\beta)$ on a field $E$ over $k$,
$$u(\alpha,\beta)=v(\beta)+w(\beta)\alpha.$$
Thus every invariant for $\mu_p\times H$ has this form,
with the invariants $v$ and $w$ uniquely determined.
Conversely, for any invariants $v$ and $w$ for $H$ over $k$,
the formula above defines an invariant for $\mu_p\times H$.
Thus we have shown that 
$$\Inv^{n+1,n}_k(\mu_p\times H)\cong \Inv_k^{n+1,n}(H)\oplus
\Inv_k^{n,n-1}(H).$$
\end{proof}

\section{Invariants of $\Z/p$}

Next, we find the cohomological invariants of $\Z/p$. When $k$
is perfect, this was mostly done
in Theorem \ref{h1fingroup}.
Here we consider any field of characteristic $p$, as is needed
for inductive arguments. More generally,
we find the invariants for the product of $\Z/p$ with any group.
Combining this with Proposition \ref{mupprod},
we determine all invariants of the group scheme
$(\Z/p)^r\times (\mu_p)^s$ (Theorem \ref{abgp}).

\begin{proposition}
\label{zp}
Let $k$ be a field of characteristic $p>0$. Then
$$\Inv^{n+1,n}_k(\Z/p)\cong H^{n+1,n}(k)\oplus H^{n,n}(k).$$
Explicitly, every invariant for $\Z/p$ over $k$ with values
in $H^{n+1,n}$ has the form $u(x)=v+xw$ for some $v$
in $H^{n+1,n}(k)$ and $w$ in $H^{n,n}(k)$. Here $x$
denotes the class of any $\Z/p$-torsor over a field $E/k$
in $H^1(E,\Z/p)\cong H^{1,0}(E)$.
\end{proposition}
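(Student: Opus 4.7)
The plan is to follow the strategy of Proposition \ref{mup} using the Artin--Schreier versal $\Z/p$-torsor. Take $U = \Spec k[t]$ with $\Z/p$ acting by $t \mapsto t + 1$; then $U/(\Z/p) = \Spec k[s]$ with $s = t^p - t$, and the versal torsor corresponds to $[s] \in H^{1,0}(k(s))$. By Theorem \ref{bm}, an invariant $\alpha$ corresponds to a balanced class $\alpha([s]) \in H^{n+1,n}(k(s))$, which is necessarily unramified on $A^1_k$. Writing the function field of $(U\times U)/(\Z/p)$ as $k(s_1, u)$ with $u = t_1 - t_2$ (so the second coordinate projects via $s_2 = s_1 - (u^p - u)$) and setting $c = u^p - u \in k(u)$, balancedness reads $\alpha([s_1]) = \alpha([s_1 - c])$ in $H^{n+1,n}(k(s_1, u))$.

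First I would show that $\alpha([s])$ lies in $U_1$ at the point $\infty \in \P^1_k$. Suppose for contradiction $\alpha([s]) \in U_j \setminus U_{j-1}$ for some $j \geq 2$, with leading class $\omega \in \Omega^n_k$ (or a pair $(\omega_1, \omega_2) \in \Omega^n_k/Z^n_k \oplus \Omega^{n-1}_k/Z^{n-1}_k$ when $p \mid j$) via Theorem \ref{filtration}. At the valuation $s_1 = \infty$ on $k(s_1, u)$ (with residue field $k(u)$), both pullbacks lie in $U_j^{k(u)}$ with the same leading class, so their difference lies in $U_{j-1}^{k(u)}$. Expanding $(s_1 - c)^j = s_1^j - jc\, s_1^{j-1} + O(s_1^{j-2})$ and using the rewriting $(a/t^i)(dt/t) \equiv (a/(it^i))(da/a)$ modulo $U_{i-1}$ (for $p \nmid i$) from the text preceding Theorem \ref{filtration}, one computes the leading class of the difference in $U_{j-1}/U_{j-2}$ as a nonzero rational multiple of $c\omega$ (or the analogous expression involving $du \wedge \omega_2$ and $c\omega_1$ when $p \mid j$). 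Since $c \in k(u) \setminus k$, vanishing forces $\omega = 0$ (or $\omega_1 = \omega_2 = 0$), contradicting the choice of $j$.

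Next, write $\alpha([s]) = s\omega + g$ where $\omega \in \Omega^n_k$ represents the class of $\alpha([s])$ in $U_1/U_0$ and $g \in U_0$ at $\infty$. Since $\alpha([s])$ and $s\omega$ are unramified at every closed point of $A^1_k$, so is $g$; by Theorem \ref{k(t)}, $g$ is tame everywhere on $\P^1_k$ with vanishing residues at finite points, and the residue-sum formula forces the residue at $\infty$ to vanish too, so $g = v$ for some $v \in H^{n+1,n}(k)$. Plugging $\alpha([s]) = s\omega + v$ back into the balanced equation yields $-c\omega = 0$ in $H^{n+1,n}(k(s_1, u))$, hence $(u^p - u)\omega = 0$ in $H^{n+1,n}(k(u))$ by the injectivity of the constant embedding from Theorem \ref{k(t)}.

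The main obstacle is the key lemma: for $\omega \in \Omega^n_k$, $(u^p - u)\omega = 0$ in $H^{n+1,n}(k(u))$ if and only if $\omega \in \Omega^n_{\log, k} = H^{n,n}(k)$. The ``$\Leftarrow$'' direction is the identity $(u^p - u)\xi = \mathcal{P}(u\xi)$ for log $\xi$, which vanishes in $H^{n+1,n}(k(u))$. For ``$\Rightarrow$'', the wild part of $(u^p - u)\omega$ at $u = \infty$ lies in $U_p$ with leading class $([\omega], 0) \in \Omega^n_k/Z^n_k \oplus \Omega^{n-1}_k/Z^{n-1}_k$, so vanishing forces $\omega$ closed. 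A Cartier decomposition $\omega = d\eta + a^p \xi$ together with the identity $\mathcal{P}(ua\xi) = (u^p a^p - ua)\xi$ yields $(u^p - u)\omega \equiv -u^p(\omega^{(p)} - \omega)$ modulo $\mathcal{P}$ and exact forms, where $\omega^{(p)} = \sum \bar{a}_i^p \xi_i$ for a decomposition $\omega = \sum \bar{a}_i \xi_i$ into log forms. A separate filtration analysis at $u = \infty$ shows that $u^p \tau = 0$ in $H^{n+1,n}(k(u))$ forces $\tau$ to be exact in $\Omega^n_k$, so $\omega^{(p)} - \omega$ is exact; by Illusie's characterization $\Omega^n_{\log, k} = \{\omega \in Z^n_k : C(\omega) = \omega\}$, this places $\omega$ in $\Omega^n_{\log, k}$. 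Combining, $\alpha([s]) = v + [s]\cdot w$ with $w \in H^{n,n}(k)$ corresponding to $\omega$; uniqueness follows because $\alpha$ evaluated at the trivial torsor recovers $v$.
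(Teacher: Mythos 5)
Your overall strategy is legitimate and genuinely different from the paper's: you characterize the balanced classes in $H^{n+1,n}(k(s))$ directly, in the style of the proof of Proposition \ref{mup}, rather than using the normalization of the invariant together with Izhboldin's exact sequence for cyclic degree-$p$ extensions (Theorem \ref{cyclic}), which is how the paper gets $\alpha([s])=[s]v$ with $v\in H^{n,n}(k(s))$ and hence tameness-plus-$U_1$ at $\infty$ in one stroke. Your key lemma --- $(u^p-u)\omega=0$ in $H^{n+1,n}(k(u))$ if and only if $\omega\in\Omega^n_{\log,k}$ --- is correct, and your sketch of it is essentially right: the identity $(u^p-u)\omega=\sum\mathcal{P}(ua_i\xi_i)-u^p\mathcal{P}(\omega)$ (valid because $u^p\,d\tau=d(u^p\tau)$, so multiplication by $u^p$ is defined on $\Omega^n_k/d\Omega^{n-1}_k$) reduces everything to showing $u^p\tau=0$ forces $\tau$ exact, which follows from $U_p/U_{p-1}$ (giving $\tau$ closed), then Cartier plus $U_1/U_0$ at $u=\infty$, and finally $\ker\mathcal{P}=\Omega^n_{\log,k}$. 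Steps (c)--(e) are also fine.

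The genuine gap is in your first step, the claim that $\alpha([s])\in U_1$ at $\infty$, specifically in the case $p\mid j$. Writing the top part of $\alpha([s])$ as $s^j\omega_1^{\mathrm{rep}}+s^{j-1}ds\wedge\omega_2^{\mathrm{rep}}\pmod{U_{j-1}}$ with constant coefficients, the difference $\alpha([s_1])-\alpha([s_1-c])$ contributes $(s_1^j-(s_1-c)^j)\omega_1^{\mathrm{rep}}=jc\,s_1^{j-1}\omega_1^{\mathrm{rep}}+O(s_1^{j-2})$; when $p\mid j$ the coefficient $j$ vanishes mod $p$, so the $\omega_1$-part lands already in $U_{j-2}$ and contributes \emph{nothing} to $U_{j-1}/U_{j-2}$. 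A direct computation shows the class of the difference in $U_{j-1}/U_{j-2}\cong\Omega^n_{k(u)}$ is $-\tfrac{c}{j-1}\,d\omega_2^{\mathrm{rep}}$, whose vanishing kills $\omega_2$ but gives no control on $\omega_1$; your claimed ``nonzero rational multiple of $c\omega$'' is simply not what comes out. Descending further in the filtration does not obviously help, since the first level at which $\omega_1$ reappears (governed by the first nonvanishing $\binom{j}{i}$ mod $p$) also receives uncontrolled contributions from the lower-order part of $\alpha([s])$. Two ways to close the gap: (a) also examine the place $u=\infty$ of $k(s_1,u)$, with residue field $k(s_1)$ --- there $\alpha([s_1])$ is unramified (it is pulled back from the residue field), while $\alpha([s_1-c])$ lies in $U_{pj}$ with leading class $((-1)^j\omega_1,0)$ in $\Omega^n/Z^n\oplus\Omega^{n-1}/Z^{n-1}$, so $\omega_1\in Z^n_k$, i.e.\ $\omega_1=0$ in $\Omega^n_k/Z^n_k$; combined with $\omega_2=0$ from the $s_1=\infty$ analysis this finishes the case $p\mid j$; or (b) follow the paper: reduce to normalized invariants, note $\alpha([s])$ dies over $k(t)$ with $t^p-t=s$, and apply Theorem \ref{cyclic} to get $\alpha([s])\in[s]\cdot H^{n,n}(k(s))$, which is visibly in $U_1$ at $\infty$ with logarithmic leading class, bypassing both the wild analysis and the key lemma.
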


\begin{proof}
Let $G=\Z/p$ act freely on the affine line $U$ over $k$ by translations.
Then $U\arrow U/G\cong A^1$ is a versal torsor $\xi$ for $G$.
Let $u$ be any cohomological invariant for $G$ over $k$ with values
in $H^{n+1,n}$; then $u$
is determined by $u(\xi)$ in $H^{n+1,n}(k(U/G))=H^{n+1,n}(k(t))$.

A cohomological invariant for any group scheme over $k$
is called {\it normalized }if
it is equal to zero on the trivial torsor over $k$.
The group of invariants
with values in $H^{n+1,n}$
is (clearly) the direct sum of the constant invariants $H^{n+1,n}(k)$
and the normalized invariants.

So it suffices to consider a normalized invariant $u$ for $G=\Z/p$
over $k$. Since the $G$-torsor $\xi$ over $U/G$ pulls back
to a trivial torsor over $U$, $u(\xi)$ in $H^{n+1,n}(k(U/G))$
pulls back to zero in $H^{n+1,n}(k(U))$. We now use the following
result of Izhboldin's \cite[Theorem B]{Izhboldintorsion}.

\begin{theorem}
\label{cyclic}
Let $F$ be a field of characteristic $p>0$, and let $E/F$ be a cyclic
extension of degree $p$. Then the sequence
$$H^{n,n}(F) \arrow H^{n+1,n}(F) \arrow H^{n+1,n}(E)$$
is exact. Here the second homomorphism is the obvious pullback,
and the first homomorphism is the product with the class of $E/F$
in $H^{1,0}(F)$.
\end{theorem}

It follows that $u(\xi)=[t]v$ for some $v$ in $H^{n,n}(k(t))\cong
\Omega^n_{\log,k(t)}$.
(Here we use that the $\Z/p$-covering $U\arrow U/G\cong A^1_k$
corresponds
to the element $t\in k(t)/\mathcal{P}(k(t))\cong H^{1,0}(k(t))$.)
In the description of $H^{n+1,n}(k(t))$ by differential forms
(section \ref{background}), it follows
that $u(\xi)$ is a sum $\sum t(da_1/a_1)\wedge\cdots\wedge(da_n/a_n)$
with $a_1,\ldots,a_n\in k(t)^*$. In coordinates $y=1/t$, this says
that $u(\xi)= \sum (1/y)(da_1/a_1)\wedge\cdots\wedge(da_n/a_n)$
with $a_i\in k(y)^*$.
Because $1/y$ has only a simple pole at $y=0$, the element
$u(\xi)$ is not too ramified at the point $y=0$ (corresponding to $t=\infty$)
in $\P^1_k$. Namely,
in the notation of section \ref{ramsection},
$u(\xi)$ is in $U_1$ with respect to the valuation $y=0$
on $k(y)=k(t)$.

Using that $k(y)^*=y^{\Z}\times O_{A^1_y,0}^*$, we can rewrite $u(\xi)$
as
$$u(\xi)=\sum \frac{1}{y}\frac{db_1}{b_1}\wedge\cdots\wedge
\frac{db_n}{b_n}
+\sum \frac{1}{y}\frac{dy}{y}\wedge\frac{dc_1}{c_1}
\wedge\cdots\wedge \frac{dc_{n-1}}{c_{n}},$$
with $b_i,c_i$ units at $y=0$. The forms in the second sum here are exact,
being equal to $d(-(1/y)(dc_1/c_1)\wedge\cdots\wedge(dc_{n-1}/c_{n-1}))$.
So $u(\xi)$ in $H^{n+1,n}(k(y))$ is represented by the form
$\sum (1/y)(db_1/b_1)\wedge\cdots\wedge(db_{n}/b_{n})$
with $b_i\in O_{A^1_y,0}^*$. By the formula for the isomorphism
$U_1/U_0\cong \Omega^1_k$ (Theorem \ref{filtration}) associated
to the choice of uniformizer $y$, it follows
that the class of $u(\xi)$ in $U_1/U_0\cong \Omega^n_k$
is in $\Omega^n_{\log,k}\cong H^{n,n}(k).$

We know that each element $\sigma$ of $H^{n,n}(k)$ gives a normalized
cohomological invariant for $G=\Z/p$ over $k$ with values
in $H^{n+1,n}$, by the product $H^1_{\et}(k,G)\times H^{n,n}(k)\arrow
H^{n+1,n}(k)$. It is immediate that $\sigma(\xi)$
in $H^{n+1,n}(k(t))$ has class (with respect to the valuation
$t=\infty$) in $U_1/U_0$ equal to $\sigma$. So, by subtracting
off an invariant of this form, we can assume that our normalized
invariant $u$ has the property that $u(\xi)$ 
in $H^{n+1,n}(k(t))$ has class in $U_1/U_0$
(at $t=\infty$) equal to zero. Equivalently, $u(\xi)$ is tamely
ramified at $t=\infty$. We want to show that a normalized invariant with
this property is zero.

We know that $u(\xi)$ in $H^{n+1,n}(k(t))$
is unramified over $U/G=A^1_k=\Spec \, k[t]$,
by Theorem \ref{bm}. By Theorem \ref{k(t)},
since $u(\xi)$ is unramified on $A^1_k$ and tamely ramified at
$t=\infty$, it is in fact unramified on all of $\P^1_k$ and comes
from an element of $H^{n+1,n}(k)$. But we took $u$ to be a normalized
invariant, and so $u(\xi)$ pulls back to zero in $H^{n+1,n}(k(U))$,
whereas pullback to $H^{n+1,n}(k(U))$ has trivial kernel on the subgroup
$H^{n+1,n}(k)\subset H^{n+1,n}(k(U/G))$. So $u(\xi)=0$ and hence
$u=0$. Thus the only invariants for $G=\Z/p$ are those listed.
\end{proof}

\begin{proposition}
\label{zpprod}
Let $H$ be an affine group scheme of finite type over a field
$k$ of characteristic $p>0$. Then
$$\Inv^{n+1,n}(\Z/p\times H)\cong \Inv^{n+1,n}_k(H)\oplus \Inv^{n,n}_k(H).$$
Explicitly, every invariant for $\Z/p\times H$ over $k$ with values
in $H^{n+1,n}$ has the form $u(\alpha,\beta)=v(\beta)+\alpha \, w(\beta)$
for some invariants $v$ of $H$ in $H^{n+1,n}$ and $w$ of $H$
in $H^{n,n}$. Here $\alpha$
is the class of any $\Z/p$-torsor over a field $E/k$
in $H^1(E,\Z/p)\cong H^{1,0}(E)$.
\end{proposition}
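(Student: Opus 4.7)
The plan is to imitate the proof of Proposition \ref{mupprod}, substituting Proposition \ref{zp} for Proposition \ref{mup} at the key step. Let $u$ be a cohomological invariant of $\Z/p\times H$ over $k$ with values in $H^{n+1,n}$. For any field $L$ over $k$ and any $H$-torsor $\beta$ over $L$, define an invariant $u_\beta$ of $\Z/p$ over $L$ with values in $H^{n+1,n}$ by
$$u_\beta(\alpha) = u(\alpha,\beta)$$
for $\alpha$ a $\Z/p$-torsor over any field extension of $L$. Functoriality of $u_\beta$ in the field variable (for fields over $L$) follows immediately from functoriality of $u$.

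By Proposition \ref{zp} applied over $L$, there exist unique elements $v(\beta)\in H^{n+1,n}(L)$ and $w(\beta)\in H^{n,n}(L)$ such that
$$u_\beta(\alpha) = v(\beta) + \alpha\cdot w(\beta)$$
for every $\Z/p$-torsor $\alpha$ over a field extension of $L$, where we use the identification $H^1(E,\Z/p)\cong H^{1,0}(E)$. The uniqueness of $v(\beta)$ and $w(\beta)$ forces compatibility with pullback along any morphism of fields $L\to L'$ over $k$ that carries one $H$-torsor to another: the pullback of $u_\beta$ equals $u_{\beta_{L'}}$, so by uniqueness the pullback of $v(\beta)$ is $v(\beta_{L'})$ and similarly for $w$. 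Thus $v$ and $w$ are well-defined cohomological invariants of $H$ over $k$, of bidegree $(n+1,n)$ and $(n,n)$ respectively.

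Conversely, given any $v\in\Inv^{n+1,n}_k(H)$ and $w\in\Inv^{n,n}_k(H)$, the formula $(\alpha,\beta)\mapsto v(\beta)+\alpha\cdot w(\beta)$ manifestly defines a cohomological invariant of $\Z/p\times H$ with values in $H^{n+1,n}$. These two constructions are inverse to each other, giving the claimed isomorphism. The only real content is that uniqueness in Proposition \ref{zp} automatically upgrades the pointwise decomposition to a natural one, which I expect to be the mild formal step deserving the most care, but no new ramification analysis or calculation is needed beyond what was done for Proposition \ref{zp}.
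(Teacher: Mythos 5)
Your proof is correct and follows essentially the same route as the paper's: restrict $u$ to $u_\beta$ for each $H$-torsor $\beta$ over a field $L/k$, apply the classification of $\Z/p$-invariants over $L$ to get unique coefficients $v(\beta)$ and $w(\beta)$, and use that uniqueness to deduce naturality of $v$ and $w$ in $\beta$. You even invoke the right ingredient, Proposition \ref{zp}, where the paper's text has a slip and cites Proposition \ref{mup} instead.
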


\begin{proof}
Let $V$ be a $k$-vector space on which $H$ acts by affine
transformations, and suppose that $H$ acts freely
on a nonempty open subset $U$ of $V$ and the quotient
scheme $U/H$ exists. (Such pairs $(V,U)$ do exist
\cite[Remark 2.7]{Totarobook}.)

Let $u$ be an invariant of $\Z/p\times H$ over $k$ with values
in $H^{n+1,n}$. For any field $L$ over $k$ and any $H$-torsor
$\beta$ over $L$, we get an invariant $u_{\beta}$ of $\Z/p$
over $L$ with values in $H^{n+1,n}$ by defining
$$u_{\beta}(\alpha)=u(\alpha,\beta)$$
for any $\Z/p$-torsor $\alpha$ over an extension field
of $L$. By Proposition \ref{mup}, there are unique elements
$v\in H^{n+1,n}(L)$ and $w\in H^{n,n}(L)$ such that
$u_{\beta}(\alpha)=v+w\alpha$ for all $\Z/p$-torsors
$\alpha$ over fields over $L$. Here we are identifying
$H^1(E,\Z/p)$ with $H^{1,0}(E)$, for fields $E$ over $L$.

By that uniqueness, $v$ and $w$ are invariants of $H$-torsors
$\beta$ on fields over $k$. These invariants satisfy (and are
characterized uniquely by): for every $(\Z/p\times H)$-torsor
$(\alpha,\beta)$ on a field $E$ over $k$,
$$u(\alpha,\beta)=v(\beta)+w(\beta)\alpha.$$
Thus every invariant for $\Z/p\times H$ has this form,
with the invariants $v$ and $w$ uniquely determined.
Conversely, for any invariants $v$ and $w$ for $H$ over $k$,
the formula above defines an invariant for $\Z/p\times H$.
Thus we have shown that 
$$\Inv^{n+1,n}_k(\Z/p\times H)\cong \Inv_k^{n+1,n}(H)\oplus
\Inv_k^{n,n}(H).$$
\end{proof}

Combining several earlier results, we now compute
all cohomological invariants of the group scheme $(\Z/p)^r\times (\mu_p)^s$.

\begin{theorem}
\label{abgp}
Let $k$ be a field of characteristic $p>0$, and let $r,s,n$ be natural
numbers. Then every cohomological invariant for $(\Z/p)^r
\times (\mu_p)^s$ over $k$ with values in $H^{n+1,n}$ is
of the form
$$u([a_1],\ldots,[a_r],\{b_1\},\ldots,\{b_s\})=
\sum_{I\subset \{1,\ldots,s\}}c_I\prod_{i\in I}\{b_i\}
+\sum_{j=1}^r [a_j]\sum_{I\subset \{1,\ldots,s\}}e_{j,I}\prod_{i\in I}\{b_i\}
$$
for some (unique) elements
$c_I$ in $H^{n-|I|+1,n-|I|}(k)$ and $e_{j,I}$ in $H^{n-|I|,n-|I|}(k)$.
That is,
$$\Inv_k((\Z/p)^r\times (\mu_p)^s)\cong
\oplus_{I\subset \{1,\ldots,s\}}H^{n-|I|+1,n-|I|}(k)
\oplus \oplus_{j=1}^r \oplus_{I\subset \{1,\ldots,s\}}H^{n-|I|,n-|I|}(k).$$
\end{theorem}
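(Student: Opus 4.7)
The plan is to combine Propositions \ref{mupprod} and \ref{zpprod} with Theorem \ref{smoothmilnor} in a two-stage induction, peeling off the factors of the product one at a time.

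First, I would iterate Proposition \ref{mupprod} to strip off the $s$ copies of $\mu_p$. Each application replaces $\Inv^{n+1,n}_k(\mu_p\times H)$ by $\Inv^{n+1,n}_k(H)\oplus\Inv^{n,n-1}_k(H)$, recording a factor of $\{b_i\}$ whenever the second summand is chosen. After $s$ iterations I obtain
\begin{equation*}
\Inv^{n+1,n}_k((\Z/p)^r\times(\mu_p)^s)\;\cong\;\bigoplus_{I\subset\{1,\ldots,s\}}\Inv^{n-|I|+1,\,n-|I|}_k((\Z/p)^r),
\end{equation*}
where the summand indexed by $I$ corresponds to invariants of the form $c_I\prod_{i\in I}\{b_i\}$ with $c_I\in\Inv^{n-|I|+1,\,n-|I|}_k((\Z/p)^r)$.

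Next, I would iterate Proposition \ref{zpprod} to compute $\Inv^{m+1,m}_k((\Z/p)^r)$ for each $m$. Each step decomposes $\Inv^{m+1,m}_k(\Z/p\times H)$ as $\Inv^{m+1,m}_k(H)\oplus\Inv^{m,m}_k(H)$. The key point is that after the $\mu_p$'s have already been removed, the remaining group $(\Z/p)^j$ is a constant, hence smooth, group scheme over $k$, so Theorem \ref{smoothmilnor} applies and gives $\Inv^{m,m}_k((\Z/p)^j)=H^{m,m}(k)$. This forces the coefficient of $[a_j]$ at each step to be a constant, so no cross-terms like $[a_i][a_j]$ can appear. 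After $r$ iterations,
\begin{equation*}
\Inv^{m+1,m}_k((\Z/p)^r)\;\cong\;H^{m+1,m}(k)\oplus\bigoplus_{j=1}^r H^{m,m}(k),
\end{equation*}
with the summand indexed by $j$ consisting of invariants $[a_j]\cdot e_j$ for $e_j\in H^{m,m}(k)$.

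Substituting the second decomposition into the first, with $m=n-|I|$, yields the desired direct-sum isomorphism and the explicit formula stated in the theorem. The one subtlety is the order of the two stages: one must strip the $\mu_p$'s first and the $\Z/p$'s second. If one reversed the order, one would need to know $\Inv^{m,m}_k((\Z/p)^{r-1}\times(\mu_p)^s)$, and since $\mu_p$ is not smooth in characteristic $p$, Theorem \ref{smoothmilnor} would not apply. Beyond this ordering issue, no step is genuinely hard; the whole proof is a bookkeeping of two recursive decompositions combined with one appeal to smoothness.
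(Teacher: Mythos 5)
Your proposal is correct and uses exactly the same ingredients as the paper's proof (Propositions \ref{mupprod} and \ref{zpprod} together with Theorem \ref{smoothmilnor} applied to the smooth group $(\Z/p)^r$); the paper merely presents the two stages in the opposite order, first computing $\Inv^{m+1,m}_k((\Z/p)^r)$ and then adjoining the $\mu_p$ factors, which is logically the same decomposition. Your observation that one cannot peel a $\Z/p$ off the full product without confronting $\Inv^{n,n}$ of a group containing $\mu_p$ factors is a fair point, though that route could also be salvaged via Proposition \ref{smoothprod} and Theorem \ref{vialops}.
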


\begin{proof}
The group scheme $(\Z/p)^r$ is smooth over $k$,
and so all its invariants in $H^{n,n}$ are constant (Theorem
\ref{smoothmilnor}). Applying Proposition \ref{zpprod} (on products
with $\Z/p$),
we find that
$$\Inv^{n+1,n}_k((\Z/p)^r)\cong 
H^{n+1,n}(k) \oplus \oplus_{j=1}^r H^{n,n}(k).$$
Applying Proposition \ref{mupprod} (on products with $\mu_p$)
gives the invariants for $(\Z/p)^r\times (\mu_p)^s$.
\end{proof}

\section{Symmetric groups}
\label{symmsect}

For all finite groups (as opposed to more general finite group schemes),
it may be possible to determine the mod $p$
cohomological invariants over all
fields of characteristic $p$, not just perfect fields
as in Theorem \ref{h1fingroup}. Perhaps all invariants
come from the abelianization of the group. In this section, we prove
this in the case of the symmetric groups.

Equivalently, we determine the cohomological invariants
of \etale algebras in characteristic 2. There are analogies
with Serre's calculation in characteristic not 2. Regardless
of the characteristic, all invariants of \etale algebras
with odd-primary coefficients are constant (by Theorem \ref{symmodd} and
\cite[section 24]{GMS}). Over a field $k$ of characteristic not 2,
$\Inv_k^*(S_n,\F_2)$ is a free module over $H^*(k,\F_2)$ with basis
$1=w_0,w_1,\ldots,w_m$, where $m=\lfloor n/2\rfloor$
\cite[Theorem 25.13]{GMS}. Here the elements
$w_i$ are the Stiefel-Whitney classes of the trace form $\tr(xy)$
associated to an \etale algebra. For $k$ of characteristic 2, Theorem
\ref{symm2} says that there is only an analog of $w_1$.

\begin{theorem}
\label{symmodd}
Let $k$ be a field of characteristic $p>2$, and let $n$ be a positive
integer. For each integer $r$, every invariant of the symmetric
group $S_n$ over $k$ with values in mod $p$ \etale motivic
cohomology ($H^{r,r}$ or $H^{r+1,r}$) is constant.
\end{theorem}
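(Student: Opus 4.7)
The bidegree $(r,r)$ statement is immediate from Theorem \ref{smoothmilnor}: the constant group scheme $S_n$ is smooth over $k$, so every invariant with values in $H^{r,r}$ is constant.

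For bidegree $(r+1,r)$, the plan is a Sylow plus Weyl-invariance argument. Choose a generically free representation $V$ of $S_n$ as in Theorem \ref{bm}, with open free locus $U$. Fix a Sylow $p$-subgroup $P\subset S_n$, and let $W:=N_{S_n}(P)/P$, whose order is prime to $p$. Because $H^{r+1,r}$ of a field in characteristic $p$ is Galois cohomology of $\Omega^r_{\log,k_s}$, the classical corestriction-restriction identity $\mathrm{cor}\circ\mathrm{res}=[S_n:P]$ for the finite separable extension $k(U/P)/k(U/S_n)$ shows that pullback is injective on $p$-torsion. Since the induced $S_n$-torsor on $U/P$ is the pullback of the versal one on $U/S_n$, this gives an injective restriction map $\Inv_k^{r+1,r}(S_n)\hookrightarrow\Inv_k^{r+1,r}(P)$, whose image sits in the $W$-fixed subgroup $\Inv_k^{r+1,r}(P)^W$ (inner automorphisms of $S_n$ do not change an $S_n$-torsor).

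It therefore suffices to show $\Inv_k^{r+1,r}(P)^W=H^{r+1,r}(k)$. When $n<p$, $P$ is trivial and the claim is vacuous; when $p\leq n<p^2$, $P=(\Z/p)^m$ with $m=\lfloor n/p\rfloor$, and Theorem \ref{abgp} gives
$$\Inv_k^{r+1,r}(P)\cong H^{r+1,r}(k)\oplus\bigoplus_{j=1}^{m}H^{r,r}(k)\cdot\alpha_j,$$
where $\alpha_j\in H^{1,0}$ denotes the class of the $j$-th $\Z/p$-factor. The Weyl group contains an $(\F_p)^*$ acting by scalar multiplication on each $\alpha_j$. Since $p>2$, one can pick $\lambda\in(\F_p)^*$ with $\lambda\neq 1$; then $W$-invariance forces $(1-\lambda^{-1})w_j\alpha_j=0$ for each $j$. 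Since $1-\lambda^{-1}$ is a unit in $\F_p$ and the cup product $H^{r,r}(k)\to H^{r+1,r}(k(t))$, $w\mapsto[t]\cdot w$, is injective (as can be read off from the graded piece $U_1/U_0$ at $t=\infty$ in Theorem \ref{filtration}), each $w_j$ vanishes. This settles all $n<p^2$.

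The main obstacle is the case $n\geq p^2$, where the Sylow $P$ becomes an iterated wreath product $\Z/p\wr\cdots\wr\Z/p$, combined via direct product with factors coming from the base-$p$ expansion of $n$. My plan is a secondary induction on the wreath structure of $P$, using Propositions \ref{mup}, \ref{zp}, \ref{mupprod}, \ref{zpprod}, and Theorem \ref{abgp} to inductively describe $\Inv_k^{r+1,r}(P)$ one level at a time. At each stage the relevant non-constant coefficients should be killed because $W$ still contains an $(\F_p)^*$-factor acting on the corresponding Artin-Schreier class by multiplication, so the same scalar argument applies. The whole strategy hinges on $p>2$: for $p=2$ the group $(\F_p)^*$ is trivial and no scalar killing is available, consistent with the nontrivial mod $2$ invariants that appear in Theorem \ref{symm2}.
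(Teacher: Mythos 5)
Your bidegree $(r,r)$ argument is exactly the paper's. For bidegree $(r+1,r)$ you take a genuinely different route: the paper argues geometrically, compactifying the standard representation to $X=(\P^1)^n$ with $X/S_n\cong\P^n$, observing that the versal class is unramified away from the discriminant divisor $\Delta$, killing the ramification along $\Delta$ by pulling back along the degree-$2$ (hence tame, since $p>2$) cover $X/A_n\arrow X/S_n$ and invoking Theorem \ref{filtration}, and then descending to $H^{r+1,r}(k)$ by iterating Theorem \ref{k(t)} over $\P^n$. Your Sylow reduction is sound as far as it goes: corestriction does exist for $H^{r+1,r}=H^1_{\Gal}(-,\Omega^r_{\log})$ along finite separable extensions, the pullback of the versal $S_n$-torsor to $k(U/P)$ is induced from the versal $P$-torsor, so $\mathrm{cor}\circ\mathrm{res}=[S_n:P]$ does give injectivity of $\Inv_k^{r+1,r}(S_n)\arrow\Inv_k^{r+1,r}(P)$, and the image is $N_{S_n}(P)/P$-invariant. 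For $p\leq n<p^2$ the scalar action of $(\F_p)^*$ on each $\Z/p$-factor, combined with the uniqueness of coefficients in Theorem \ref{abgp}, correctly kills the nonconstant part.

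The genuine gap is the case $n\geq p^2$, which you identify but do not close. There the Sylow $p$-subgroup $P$ is built from iterated wreath products $\Z/p\wr\cdots\wr\Z/p$, and nothing in the paper computes $\Inv_k^{r+1,r}$ of such a group: Propositions \ref{mupprod} and \ref{zpprod} and Theorem \ref{abgp} only treat \emph{direct} products with $\mu_p$ and $\Z/p$, not semidirect products or extensions, so there is no way to ``describe $\Inv_k^{r+1,r}(P)$ one level at a time'' with the stated tools. You also cannot reduce from $\Z/p\wr\Z/p$ to its base subgroup $(\Z/p)^p$ by another transfer, since that subgroup has index $p$. Already for $\Z/p\wr\Z/p$ the abelianization is $(\Z/p)^2$, so the group has nonconstant invariants of the form $[\chi(\cdot)]\cdot w$ for $\chi\in\Hom(P,\Z/p)$, and one would have to show both that these are all of them (an unproved computation) and that the Weyl action kills them; the assertion that ``$W$ still contains an $(\F_p)^*$-factor acting on the corresponding Artin--Schreier class by multiplication'' is not verified for the classes attached to the top quotient of the wreath product. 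As written, the proposal proves the theorem only for $n<p^2$; to finish along these lines you would need a new computation of the invariants of wreath products, whereas the paper's geometric argument sidesteps the Sylow subgroup entirely.
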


\begin{proof}
For $H^{r,r}$, this follows from Theorem \ref{smoothmilnor}.
So let $u$ be an invariant for $G=S_n$ over $k$ with values
in $H^{r+1,r}$. Let $V$ be the standard
representation of $G$, of dimension $n$ over $k$. Then $u$ gives an element
of $H^{r+1,r}(k(V/G))$, and $u$ is determined by this element,
by Theorem \ref{bm}.

The action of $G$ of $V$ extends to the
permutation action of $G$ on $X=(\P^1)^n$ over $k$, with $X/G\cong \P^n$.
The group $G$ acts freely on $X$ outside the union of the
$\binom{n}{2}$ irreducible divisors $x_i=x_j$ in $X$,
where $1\leq i<j\leq n$.
These divisors are permuted transitively by $G$, and so the morphism
$X\arrow X/G=\P^n$ is ramified only over one irreducible divisor,
the discriminant $\Delta\subset\P^n$.

By Theorem \ref{bm}, using that $X$ is a compactification
of a representation of $G$, the element $u\in H^{r+1,r}(k(X/G))$
is unramified outside the divisor $\Delta$.
Likewise, the alternating group $A_n$ acts freely on $X=(\P^1)^n$
outside a closed subset of codimension at least 2, and so the pullback
of $u$ to $H^{r+1,r}(k(X/A_n))$ is unramified along every
irreducible divisor in $X/A_n$.

Since $p$ is odd and the class $u$ pulls back to an unramified class
by the double cover $X/A_n\arrow X/S_n$, $u$ is in fact unramified
along every irreducible divisor in $X/S_n\cong \P^n$. (This follows
from the description of $H^{r+1,r}(k(X/S_n))/H^{r+1,r}_{\nr}(k(X/S_n))$
in Theorem \ref{filtration}, where ``nr'' denotes
the subgroup of classes unramified along $\Delta$. Use
that a uniformizer $t$ (the discriminant polynomial)
in $k(X/S_n)$ along $\Delta$ pulls back in $k(X/A_n)$
to $u^2$, for some uniformizer $u$
along the inverse image of $\Delta$.)

By Theorem \ref{k(t)}, every unramified cohomology class in $H^{r+1,r}$
of the function field of $\P^1$ over a field $k_0$ is pulled back
from a unique class on $k_0$. Applying this repeatedly gives the same
statement on the function field of $\P^n$. It follows that the class
$u$ is pulled back from $H^{r+1,r}(k)$. Thus $u$ is constant
as an invariant of $G$.
\end{proof}

\begin{theorem}
\label{symm2}
Let $k$ be a field of characteristic $2$, and let $n\geq 2$.
For each integer $r$, every invariant of the symmetric
group $S_n$ over $k$ with values in $H^{r,r}$ is constant.
Also, the group of invariants with values in $H^{r+1,r}$
is $H^{r+1,r}(k)\oplus H^{r,r}(k)$. Every invariant for $S_n$ over $k$
in $H^{r+1,r}$ has the form
$$u(x)=c+\disc(x)e$$
for some (unique) $c\in H^{r+1,r}(k)$ and $e\in H^{r,r}(k)$.
Here $\disc(x)$ is the invariant of $S_n$ in $H^{1,0}$
corresponding to the sign homomorphism $S_n\arrow \Z/2$.
\end{theorem}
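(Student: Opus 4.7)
The $H^{r,r}$ claim is immediate from Theorem \ref{smoothmilnor}. For $H^{r+1,r}$ my plan is to reduce to normalized invariants $u$ and to produce a unique $w\in H^{r,r}(k)$ with $u=\disc\cdot w$; that $c+\disc\cdot e$ is always an invariant is manifest from the sign homomorphism $S_n\arrow \Z/2$ and the cup product $H^{1,0}\otimes H^{r,r}\arrow H^{r+1,r}$. Following the setup of the proof of Theorem \ref{symmodd}, I work with the versal torsor $\xi\colon U\arrow U/S_n$ inside the standard representation and with the compactification $X=(\P^1)^n\arrow X/S_n=\P^n$; Theorem \ref{bm} together with freeness of the $S_n$-action off the transposition divisors in $X$ gives that $u(\xi)\in H^{r+1,r}(k(\P^n))$ is unramified on $\P^n\setminus \Delta$. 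What is new in characteristic $2$ is that I must control the wild ramification at $\Delta$, since the coprime-degree trick from the proof of Theorem \ref{symmodd} is unavailable.

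The key step is a local analysis at the generic point of $\Delta$. The inertia subgroup of the cover $X\arrow \P^n$ there is $\langle\tau\rangle\cong\Z/2$ for a transposition $\tau$, and Hensel's lemma applied to the universal degree-$n$ polynomial over the henselization $K^h_\Delta$ factors it as one separable quadratic times $n-2$ linear factors. Interpreting $S_n$-torsors as rank-$n$ \'etale algebras, this identifies $\xi|_{K^h_\Delta}$ with the torsor induced from some $\langle\tau\rangle$-torsor $\beta\in H^1(K^h_\Delta,\Z/2)$, so $u(\xi)|_{K^h_\Delta}$ equals $u|_{\langle\tau\rangle}$ evaluated on $\beta$. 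Proposition \ref{zp}, combined with normalization, gives $u|_{\langle\tau\rangle}(\beta)=\beta\cdot w$ for a unique $w\in H^{r,r}(k)$, and since the composition $\langle\tau\rangle\inj S_n\surj \Z/2$ (sign) is the identity, $\disc(\xi)|_{K^h_\Delta}=\beta$ as well. Thus $u'=u-\disc\cdot w$ satisfies $u'(\xi)|_{K^h_\Delta}=0$, making $u'(\xi)$ unramified at $\Delta$.

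With $u'(\xi)$ now unramified at every divisor of $\P^n$, iterating Theorem \ref{k(t)} as in the proof of Theorem \ref{symmodd} forces $u'(\xi)$ to come from $H^{r+1,r}(k)$. Normalization of $u'$ together with triviality of $\xi$ over $k(U)$ makes $u'(\xi)$ vanish in $H^{r+1,r}(k(U))$, and injectivity of $H^{r+1,r}(k)\inj H^{r+1,r}(k(U))$ via any $k$-point of $U$ collapses this to $u'(\xi)=0$; Theorem \ref{bm} then yields $u=\disc\cdot w$. Uniqueness reduces to showing $\disc\cdot w=0$ implies $w=0$: evaluating on the $S_n$-torsor induced from the Artin-Schreier $\Z/2$-torsor of class $[t]\in H^{1,0}(k(t))$ converts this into injectivity of $w\mapsto [t]\cdot w$ from $H^{r,r}(k)$ to $H^{r+1,r}(k(t))$, which is already read off the $U_1/U_0$ residue at $t=\infty$ in the proof of Proposition \ref{zp}. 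The main obstacle is the local identification of $\xi|_{K^h_\Delta}$ in the second paragraph; everything else then assembles from the earlier theorems.
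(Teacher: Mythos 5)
Your overall strategy---kill the ramification of $u(\xi)$ along $\Delta$ by subtracting $\disc\cdot w$, with $w$ read off from the restriction of $u$ to a transposition---is a legitimate alternative to the paper's induction, and the outer layers (unramifiedness off $\Delta$, the descent via Theorem \ref{k(t)}, the uniqueness argument via the $U_1/U_0$ class at $t=\infty$) are fine. But the key local step fails for $n\geq 4$. Over the henselization $K^h_\Delta$ at the generic point of $\Delta$, the universal polynomial does \emph{not} factor as a separable quadratic times $n-2$ linear factors: its reduction over the residue field $k(\Delta)=k(x_1,x_3,\dots,x_n)^{S_{n-2}}$ is $(T-x_1)^2\,g(T)$ with $g$ \emph{irreducible} of degree $n-2$ (since $[k(\Delta)(x_3):k(\Delta)]=n-2$), so Hensel's lemma gives one ramified quadratic factor times one unramified degree-$(n-2)$ factor that is a field. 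Equivalently, the decomposition group at $\Delta$ is all of $S_2\times S_{n-2}$, and $\xi|_{K^h_\Delta}$ is induced from an $(S_2\times S_{n-2})$-torsor $(\beta,\gamma)$ with $\gamma$ a nontrivial unramified $S_{n-2}$-torsor, not from a $\langle\tau\rangle$-torsor. Hence neither $u(\xi)|_{K^h_\Delta}=u|_{\langle\tau\rangle}(\beta)$ nor $\disc(\xi)|_{K^h_\Delta}=\beta$ is justified; the correct statements are $u(\xi)|_{K^h_\Delta}=u|_{S_2\times S_{n-2}}(\beta,\gamma)$ and $\disc(\xi)|_{K^h_\Delta}=\beta+\disc(\gamma)$.

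To repair this you must control the $\gamma$-contribution. Proposition \ref{zpprod} together with Theorem \ref{smoothmilnor} gives $u|_{S_2\times S_{n-2}}(\beta,\gamma)=v(\gamma)+\beta\, w$ with $w\in H^{r,r}(k)$ constant and $v$ an invariant of $S_{n-2}$; after subtracting $\disc\cdot w$ you are left with $v(\gamma)-\disc(\gamma)w$ at $\Delta$, and you still need an argument that an invariant evaluated on a torsor extending over the henselian valuation ring lands in the unramified subgroup --- this is not supplied by Theorem \ref{bm}, which only gives unramifiedness over the free locus. Once that is supplied, your one-step argument does close up, but by then it has absorbed essentially the same inputs as the paper's proof, which instead shows that restriction $\NormInv_k^{r+1,r}(S_n)\arrow\NormInv_k^{r+1,r}(S_2\times S_{n-2})$ is injective (using that $X/(S_2\times S_{n-2})\arrow X/S_n$ has a degree-one point over $\Delta$, so the completions along the corresponding divisors agree) and then iterates down to $(\Z/2)^{\lfloor n/2\rfloor}$, using the $S_{\lfloor n/2\rfloor}$-symmetry to cut the answer to a single copy of $H^{r,r}(k)$. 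Note that your argument as written is complete only for $n=2,3$, where the degree-$(n-2)$ factor really is split.
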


\begin{proof}
Every invariant of $S_n$ with values in $H^{r,r}$ is constant
by Theorem \ref{smoothmilnor}. We now consider invariants
in $H^{r+1,r}$. I claim that the restriction
$$\NormInv_k^{r+1,r}(S_n)\arrow \NormInv_k^{r+1,r}(S_2\times S_{n-2})$$
is injective. Indeed, let $u$ be a normalized invariant
for $S_n$ that restricts to 0 as an invariant
of $S_2\times S_{n-2}$. As in the proof of Theorem \ref{symmodd},
consider the action of $G=S_n$ on $X=(\P^1)^n$. We know
that $u$ is determined by its class in $H^{r+1,r}(k(X/S_n))$,
and that this class is unramified outside the discriminant
divisor $\Delta$ in $X/S_n\cong \P^n$.

We are given that $u$ pulls back to 0
in $H^{r+1,r}(k(X/(S_2\times S_{n-2})))$. The point is that $S_n$ acts
transitively on the set of divisors $x_i=x_j$ in $X=(\P^1)^n$,
and the stabilizer subgroup of the divisor $x_1=x_2$
is $S_2\times S_{n-2}$. As a result, the map
$X/(S_2\times S_{n-2})\arrow X/S_n$ splits completely over $\Delta$;
that is, the completions of the two function fields along the corresponding
divisors are isomorphic. It follows that $u\in H^{r+1,r}(k(X/S_n))$
is unramified along $\Delta$. Since $u$ is also unramified along all
other irreducible divisors in $\P^n_k$, $u$ is pulled back
from $H^{r+1,r}(k)$. Since $u$ pulls back to 0 as an invariant
of $S_2\times S_{n-2}$, $u$ is equal to 0 in $H^{r+1,r}(k)$,
as we want.

By Proposition \ref{zpprod}, we have $\NormInv_k^{r+1,r}(S_2
\times S_{n-2})\cong \NormInv_k^{r+1,r}(S_{n-2})\oplus \Inv_k^{r,r}
(S_{n-2})$. Since $S_{n-2}$ is smooth over $k$, $\Inv_k^{r,r}(S_{n-2})$
is isomorphic to $H^{r,r}(k)$ by Theorem \ref{smoothmilnor}.
So $\NormInv_k^{r+1,r}(S_2\times S_{n-2})
\cong \NormInv_k^{r+1,r}(S_{n-2})\oplus H^{r,r}(k)$.
Let $m=\lfloor n/2\rfloor$. Repeatedly applying the isomorphism just mentioned
together with the previous paragraph's result, we find that restricting
from $S_n$ to its subgroup $(\Z/2)^m$ gives an injection
$$\varphi\colon \NormInv_k^{r+1,r}(S_n)\inj \oplus_{i=1}^m H^{r,r}(k).$$
Since the normalizer of $(\Z/2)^m$ in $S_n$ contains $S_m$,
the image of $\varphi$ must be fixed by $S_m$. So we have an injection
$$\NormInv_k^{r+1,r}(S_n)\inj H^{r,r}(k).$$
That is, every normalized invariant $u$ of $S_n$
is determined by its restriction to the subgroup
$H=\langle (12)\rangle\cong \Z/2\subset S_n$, where it has the form
$u([a])=[a]e$ for some $e\in H^{r,r}(k)$, writing $[a]$ for an element
of $H^{1,0}$.

Conversely, for any $e\in H^{r,r}(k)$,
there is a normalized invariant of $S_n$ that restricts
to the invariant $u([a])=[a]e$ on the subgroup $H$;
namely, the pullback of $e\in \NormInv_k^{r+1,r}(\Z/2)\cong H^{r,r}(k)$
via the sign homomorphism $S_n\arrow \Z/2$. (Here we use
that the composition $\langle (12)\rangle\subset S_n\arrow \Z/2$
is the identity.) Thus we have shown that $\NormInv_k^{r+1,r}(S_n)
\cong H^{r,r}(k)$.
\end{proof}

\section{Invariants with values in $H^{n,n}$}

Although much of this paper is on invariants of affine
group schemes in $H^{n+1,n}$, in this section we prove analogous
results for invariants in $H^{n,n}$, the other part of mod $p$ \etale
motivic cohomology for fields. Specifically, we compute the invariants
for $\mu_p\times H$ and $\Z/p\times H$ for any group scheme $H$.

First, Vial computed the invariants of $\mu_p$ with values
in $H^{n,n}$, as mentioned in Theorem \ref{vialops}:
for any field $k$ of characteristic $p>0$,
$$\Inv_k^{n,n}(\mu_p)\cong H^{n,n}(k)\oplus H^{n-1,n-1}(k).$$
In more detail, for $a\in H^{n,n}(k)$ and $b\in H^{n-1,n-1}(k)$,
the corresponding invariant of a $\mu_p$-torsor $\xi$
over a field $F/k$ is $a+b\xi$, where $\xi\in H^1(F,\mu_p)=H^{1,1}(F)$.

From there, we can compute the invariants of $\mu_p\times H$ in $H^{n,n}$
for any group $H$.

\begin{proposition}
\label{mupprodnn}
Let $H$ be an affine group scheme of finite type over a field $k$.
Then
$$\Inv^{n,n}_k(\mu_p\times H)\cong \Inv_k^{n,n}(H)\oplus
\Inv_k^{n-1,n-1}(H).$$
The isomorphism sends invariants $a$ and $b$ for $H$ (with $a$ of 
bidegree $(n,n)$ and $b$ of bidegree $(n-1,n-1)$) to 
$a+b\xi$, where $\xi$ is the obvious invariant
for $\mu_p$ of bidegree $(1,1)$, coming from the isomorphism
$H^1(F,\mu_p)\cong H^{1,1}(F)$ for fields $F$ over $k$.
\end{proposition}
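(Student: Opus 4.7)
The plan is to follow the proof of Proposition \ref{mupprod} verbatim, replacing the role of Proposition \ref{mup} with Vial's computation of $\Inv_k^{n,n}(\mu_p) \cong H^{n,n}(k) \oplus H^{n-1,n-1}(k)$ stated immediately before the proposition. The geometric input (the existence of a $(V,U)$ pair for $H$) is not actually needed in the argument; the proof is purely formal and relies only on the explicit form of the mod $p$ invariants of $\mu_p$.

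First I would take a cohomological invariant $u$ of $\mu_p \times H$ over $k$ with values in $H^{n,n}$. For any field $L$ over $k$ and any $H$-torsor $\beta$ over $L$, define an invariant $u_\beta$ of $\mu_p$ over $L$ with values in $H^{n,n}$ by
$$u_\beta(\alpha) = u(\alpha,\beta)$$
for any $\mu_p$-torsor $\alpha$ over an extension field of $L$. By Vial's formula, there are unique elements $v(\beta) \in H^{n,n}(L)$ and $w(\beta) \in H^{n-1,n-1}(L)$ such that $u_\beta(\alpha) = v(\beta) + w(\beta)\,\alpha$ for all $\mu_p$-torsors $\alpha$ over fields over $L$, under the identification $H^1(E,\mu_p) \cong H^{1,1}(E)$.

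Next I would check that $v$ and $w$ are functorial in $\beta$: for a field extension $L \subset L'$ over $k$, the uniqueness in Vial's decomposition, applied to the restriction of $u$ to $H$-torsors obtained by pullback, forces the pullbacks of $v(\beta)$ and $w(\beta)$ to agree with $v(\beta_{L'})$ and $w(\beta_{L'})$, respectively. Thus $v$ and $w$ are cohomological invariants of $H$ of bidegrees $(n,n)$ and $(n-1,n-1)$. By construction, for every $(\mu_p \times H)$-torsor $(\alpha,\beta)$ over a field $E/k$,
$$u(\alpha,\beta) = v(\beta) + w(\beta)\,\alpha,$$
and $v$, $w$ are uniquely characterized by this identity.

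Conversely, given any invariants $v \in \Inv_k^{n,n}(H)$ and $w \in \Inv_k^{n-1,n-1}(H)$, the formula $(\alpha,\beta) \mapsto v(\beta) + w(\beta)\,\alpha$ is manifestly functorial in the pair $(\alpha,\beta)$ and therefore defines a cohomological invariant of $\mu_p \times H$ with values in $H^{n,n}$. The two constructions are mutually inverse, giving the asserted isomorphism. There is no genuine obstacle in the argument: the only nontrivial input is Vial's uniqueness statement, which is already available from Theorem \ref{vialops}, and the rest is a formal variable-splitting argument identical to that of Proposition \ref{mupprod}.
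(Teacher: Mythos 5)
Your proof is correct and is essentially the paper's own argument: the paper likewise runs the proof of Proposition \ref{mupprod} verbatim with Vial's computation of $\Inv_k^{n,n}(\mu_p)$ in place of Proposition \ref{mup}, using the uniqueness of the decomposition $u_\beta(\alpha)=v+w\alpha$ to see that $v$ and $w$ are themselves invariants of $H$. Your observation that the versal pair $(V,U)$ is never actually used is also accurate.
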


\begin{proof}
We follow the proof of Proposition \ref{mupprod} almost verbatim.
Let $V$ be a $k$-vector space on which $H$ acts by affine
transformations, and suppose that $H$ acts freely
on a nonempty open subset $U$ of $V$ and the quotient
scheme $U/H$ exists. (Such pairs $(V,U)$ do exist
\cite[Remark 2.7]{Totarobook}.)

Let $u$ be an invariant of $\mu_p\times H$ over $k$ with values
in $H^{n,n}$. For any field $L$ over $k$ and any $H$-torsor
$\beta$ over $L$, we get an invariant $u_{\beta}$ of $\mu_p$
over $L$ with values in $H^{n,n}$ by defining
$$u_{\beta}(\alpha)=u(\alpha,\beta)$$
for any $\mu_p$-torsor $\alpha$ over an extension field
of $L$. By Vial's result above, there are unique elements
$v\in H^{n,n}(L)$ and $w\in H^{n-1,n-1}(L)$ such that
$u_{\beta}(\alpha)=v+w\alpha$ for all $\mu_p$-torsors
$\alpha$ over fields over $L$. Here we are identifying
$H^1(E,\mu_p)$ with $H^{1,1}(E)$, for fields $E$ over $L$.

By that uniqueness, $v$ and $w$ are invariants of $H$-torsors
$\alpha$ on fields over $k$. These invariants satisfy (and are
characterized uniquely by): for every $(\mu_p\times H)$-torsor
$(\alpha,\beta)$ on a field $E$ over $k$,
$$u(\alpha,\beta)=v(\beta)+w(\beta)\alpha.$$
Thus every invariant for $\mu_p\times H$ has this form,
with the invariants $v$ and $w$ uniquely determined.
Conversely, for any invariants $v$ and $w$ for $H$ over $k$,
the formula above defines an invariant for $\mu_p\times H$.
Thus we have shown that 
$$\Inv^{n,n}_k(\mu_p\times H)\cong \Inv_k^{n,n}(H)\oplus
\Inv_k^{n-1,n-1}(H).$$
\end{proof}

We now compute the invariants of $\Z/p\times H$ with values
in $H^{n,n}$. More generally, we can handle $G\times H$ for
any smooth $k$-group $G$.

\begin{proposition}
\label{smoothprod}
Let $k$ be a field of characteristic $p>0$.
Let $G$ and $H$ be affine $k$-group schemes of finite type
over $k$ with $G$ smooth over $k$. Then
$$\Inv_k^{n,n}(G\times H)\cong \Inv_k^{n,n}(H).$$
\end{proposition}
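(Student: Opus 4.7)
The approach mirrors Propositions \ref{mupprod}, \ref{zpprod}, and \ref{mupprodnn}: one reduces a two-variable invariant to a one-variable invariant by fixing the $H$-torsor, and then invokes the already-known description of $\Inv_L^{n,n}(G_L)$ provided by Theorem \ref{smoothmilnor}. The point is that on the $G$-side, the coefficient module $H^{n,n}$ is exactly the case where smoothness forces invariants to be constant.

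Concretely, I would fix $u \in \Inv_k^{n,n}(G \times H)$. For any field $L$ over $k$ and any $H$-torsor $\beta$ over $L$, the rule
$$u_\beta(\alpha) := u(\alpha,\beta),$$
for $\alpha$ a $G$-torsor over an extension field of $L$, defines a cohomological invariant of $G_L$ over $L$ with values in $H^{n,n}$. Since smoothness is preserved under the base change $k \to L$, the group scheme $G_L$ is smooth and affine over $L$, so Theorem \ref{smoothmilnor} (applied over $L$) gives $\Inv_L^{n,n}(G_L) = H^{n,n}(L)$. Thus there is a unique element $v(\beta) \in H^{n,n}(L)$ with $u_\beta(\alpha) = v(\beta)$ for every $G$-torsor $\alpha$; explicitly, $v(\beta) = u(e_L, \beta)$ where $e_L$ is the trivial $G$-torsor over $L$.

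Next, I would check that $\beta \mapsto v(\beta)$ is functorial in the pair $(L,\beta)$, so that it defines an element $v \in \Inv_k^{n,n}(H)$; this is immediate from functoriality of $u$ together with the uniqueness of $v(\beta)$. Conversely, any $v \in \Inv_k^{n,n}(H)$ yields an invariant of $G \times H$ via $(\alpha,\beta) \mapsto v(\beta)$, and the two assignments $u \mapsto v$ and $v \mapsto ((\alpha,\beta) \mapsto v(\beta))$ are mutually inverse. I do not expect a real obstacle: the substantive input is Theorem \ref{smoothmilnor}, and the only subtlety is verifying that its hypotheses (affine, of finite type, smooth, with positive characteristic base field) descend to $G_L$ over $L$, which they clearly do.
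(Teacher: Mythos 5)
Your proposal is correct and is essentially the paper's own argument: the paper proves this by the same specialization template as Propositions \ref{mupprod}, \ref{zpprod}, and \ref{mupprodnn}, with Theorem \ref{smoothmilnor} (applied over each extension field $L$) supplying the fact that $\Inv_L^{n,n}(G_L)=H^{n,n}(L)$, so that $u(\alpha,\beta)$ depends only on $\beta$. No gaps.
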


The proof is identical to that of Proposition \ref{mupprodnn},
starting from the fact that $\Inv_F^{n,n}(G)\cong H^{n,n}(F)$
for every field $F$ over $k$ (Theorem \ref{smoothmilnor}).

\section{Operations on \etale motivic cohomology of fields}
\label{operationsect}

Vial found all operations on Milnor $K$-theory mod $l$
of fields over a given field $k$ \cite[Theorem 1]{Vial}. Roughly speaking,
all operations are spanned by Kahn and Rost's divided power operations.
(By contrast, Steenrod operations are essentially trivial
on the motivic cohomology of fields.)
Here $l$ may be equal to the characteristic of $k$, and so Vial's
result describes all operations on the mod $p$ \etale motivic cohomology
groups $H^{n,n}$ of fields of characteristic $p$.

We now find all operations on the mod $p$ \etale
motivic cohomology groups, both $H^{m,m}$ and $H^{m+1,m}$, in
characteristic $p$. (Think of $H^{m,m}$ or $H^{m+1,m}$ as a functor
from fields over $k$ to sets; then an ``operation'' means a natural
transformation from one such functor to another. In particular, operations
are not assumed to be additive.) In short, only the known operations
exist. The proofs use the computation of the cohomological invariants
of the group scheme $(\Z/p)^r\times (\mu_p)^s$ (Theorem \ref{abgp}).

We state four theorems, describing operations from $H^{m,m}$ or $H^{m+1,m}$
to $H^{r,r}$ or $H^{r+1,r}$. First, here is Vial's theorem on operations
from $H^{m,m}$ to $H^{r,r}$, in the case of mod $p$
cohomology for fields of characteristic $p$.
If $p=2$ and $m\geq 2$, or if $p$ is odd
and $m\geq 2$ is even, then (by Kahn and Rost)
there are {\it divided power }operations
$\gamma_i\colon H^{m,m}(F)\arrow H^{im,im}(F)$ for all $i\geq 0$
and all fields $F$ of characteristic $p$, defined
on a sum of symbols $s_j=\{b_{j1},\ldots,b_{jm}\}$ by
$$\gamma_i\bigg( \sum_{j=1}^n s_j\bigg) =\sum_{|T|=i} \prod_{j\in T}s_j,$$
where the sum runs over all subsets $T$ of $\{1,\ldots,n\}$
of order $i$. These are typically not additive operations; instead,
they satisfy $\gamma_i(x+y)=\sum_{j=0}^i \gamma_j(x)\gamma_{i-j}(y)$
\cite[Properties 2.3]{Vial}.

\begin{theorem}
\label{vialthm}
(Vial)
For a field $k$ of characteristic $p>0$, the group of operations
$H^{m,m}\arrow H^{r,r}$ on fields over $k$ is of the form:

(1) if $m=0$: $H^{r,r}(k)^{\oplus p}$;

(2) if $p=2$ and $m=1$, or $p$ is odd and $m\geq 1$ is odd:
$H^{r,r}(k)\oplus H^{r-m,r-m}(k)$, with every operation
of the form $u(x)=c+ex$ for some (unique) $c$ and $e$;

(3) if $p=2$ and $m\geq 2$, or $p$ is odd and $m\geq 2$ is even:
every operation has the form $u(x)=\sum_{i\geq 0}c_i\gamma_i(x)$
for some (unique) elements $c_i$ in $H^{r-im,r-im}(k)$.
\end{theorem}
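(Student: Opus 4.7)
The plan is to reduce operations $H^{m,m} \to H^{r,r}$ to cohomological invariants of $(\mu_p)^{nm}$ (computed by Theorem \ref{vialops}), and then extract the divided-power structure by imposing the relations in Milnor $K$-theory mod $p$ together with naturality in $n$.

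First I would set up a versal element for ``sums of $n$ symbols''. Fix $n \geq 1$, let $F_n = k(x_{ij} : 1 \leq i \leq n,\, 1 \leq j \leq m)$, and put
\[\alpha_n = \sum_{i=1}^n \{x_{i1}, \ldots, x_{im}\} \in H^{m,m}(F_n).\]
Since every element of $H^{m,m}(F)$, for any field $F/k$, is the image of $\alpha_n$ under some specialization $F_n \to F$, naturality forces any operation $u$ to be determined by the sequence $(u(\alpha_n))_{n \geq 0}$. Moreover, the assignment $(\beta_{ij}) \mapsto u\bigl(\sum_i \{\beta_{i1}, \ldots, \beta_{im}\}\bigr)$ defines a cohomological invariant of $(\mu_p)^{nm}$ over $k$ with values in $H^{r,r}$. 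By Theorem \ref{vialops},
\[\Inv_k^{r,r}\bigl((\mu_p)^{nm}\bigr) \cong \bigoplus_{I \subset \{1,\ldots,n\} \times \{1,\ldots,m\}} H^{r-|I|, r-|I|}(k),\]
so $u(\alpha_n) = \sum_I d_I^{(n)} \prod_{(i,j) \in I} \{x_{ij}\}$ for unique $d_I^{(n)} \in H^{r-|I|, r-|I|}(k)$.

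Next I would impose the relations in $H^{m,m}$. Multilinearity of the symbol map in each entry $x_{ij}$, combined with Kato's presentation (Proposition \ref{katopres}), forces a block structure: only subsets $I$ that are a disjoint union of complete blocks $\{(i,1), \ldots, (i,m)\}$ for $i$ in some $T \subset \{1, \ldots, n\}$ can contribute, since a partial block produces terms incompatible with $\{xy, x_{i2}, \ldots\} = \{x, x_{i2}, \ldots\} + \{y, x_{i2}, \ldots\}$. The $S_n$-symmetry of $\alpha_n$ under permuting the $n$ summands collapses the coefficient to depend only on $|T|$, yielding
\[u(\alpha_n) = \sum_{k=0}^n c_k^{(n)}\, \gamma_k(\alpha_n)\]
with $c_k^{(n)} \in H^{r-km, r-km}(k)$, where $\gamma_k$ is the Kahn--Rost divided power. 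Specializing $x_{nj} \mapsto 1$ identifies $\alpha_n$ with $\alpha_{n-1}$ and gives $c_k^{(n)} = c_k^{(n-1)}$ for $k < n$, so each $c_k$ stabilizes in $n$, and the formula $u(x) = \sum_{k \geq 0} c_k\, \gamma_k(x)$ recovers $u$ on any $x \in H^{m,m}(F)$ (a finite sum of symbols, so the series is finite).

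Finally I would treat the three cases. In case (1), $m = 0$, $H^{0,0}(F) = \F_p$ is constant in $F$, so an operation is a set map $\F_p \to H^{r,r}(k)$, giving $H^{r,r}(k)^{\oplus p}$. In case (2), for $m = 1$ every element of $H^{1,1}$ is already a single symbol, so $u$ is literally an invariant of $\mu_p$ and Vial's formula gives $H^{r,r}(k) \oplus H^{r-1,r-1}(k)$; for $p$ odd and $m \geq 3$ odd, graded commutativity forces $\alpha^2 = 0$ for every $\alpha \in H^{m,m}$, and the rule $\gamma_2(\alpha + \beta) = \gamma_2(\alpha) + \alpha\beta + \gamma_2(\beta)$ would then require $\alpha\beta$ to depend only on $\alpha + \beta$, which fails on two-symbol elements, so $c_k = 0$ for $k \geq 2$ and only $u(x) = c_0 + c_1 x$ survives. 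In case (3), Kahn--Rost construct the divided powers in the required degrees, and every sequence $(c_k)$ gives a valid operation, matching the stated formula. The main obstacle I expect is the combinatorial reduction forcing $I$ to consist of complete blocks and collapsing to the $\gamma_k$'s; the obstruction argument ruling out higher divided powers in case (2) is also delicate, since one must exclude every non-additive candidate, not just the naive formula.
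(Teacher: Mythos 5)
The paper does not prove Theorem \ref{vialthm}: it is quoted from Vial \cite{Vial} with a citation and no argument. Your proposal is essentially correct, and it follows exactly the template that the paper itself uses for the companion results on operations into $H^{r+1,r}$ (Theorems \ref{Milnortoetale}, \ref{etaletoMilnor}, \ref{operations}): restrict to sums of $n$ symbols to get an invariant of $(\mu_p)^{nm}$, expand it in the canonical form with unique coefficients, use the relations in $H^{m,m}$ to kill all terms except complete blocks, and then use the $S_n$-symmetry and stabilization in $n$. Three small points. First, the block-structure step is cleanest via the vanishing relation (set $x_{ij}=1$, so the $i$th symbol vanishes and the value must be independent of $x_{il}$ for $l\neq j$; uniqueness of the coefficients in Theorem \ref{vialops} then kills every $I$ containing a partial block) --- this is what the paper does in the proof of Theorem \ref{Milnortoetale}; Proposition \ref{katopres} is Kato's presentation of $\Omega^n_k$ and is not the relevant presentation for $H^{m,m}\cong K^M_m/p$. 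Second, in case (2) with $p$ odd and $m\geq 3$, rather than arguing that $\gamma_2$ is ill-defined (which by itself does not exclude a well-defined linear combination of the $\gamma_k$), argue as the paper does for Theorem \ref{Milnortoetale}: transposing two summands of $T$ fixes $\alpha_n$ but multiplies $\prod_{i\in T}s_i$ by $-1$, so uniqueness of coefficients forces $c_T=-c_T=0$ for $|T|\geq 2$. Third, since Theorem \ref{vialops} is itself part of Vial's operations paper, your deduction is non-circular only because Vial establishes the invariants computation independently before deducing the operations theorem --- which he does, so your route is in fact a faithful reconstruction of his.
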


We now state the other three theorems on operations.

\begin{theorem}
\label{Milnortoetale}
For a field $k$ of characteristic $p>0$, the group of operations
$H^{m,m}\arrow H^{r+1,r}$ on fields over $k$ is as listed
in Theorem \ref{vialthm}, but with the coefficients $c$, $e$,
and so on in $H^{j+1,j}$ rather than $H^{j,j}$.
\end{theorem}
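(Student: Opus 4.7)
The plan is to run Vial's proof of Theorem \ref{vialthm} with the target $H^{r,r}$ replaced by $H^{r+1,r}$ throughout. The key input in Vial's argument is the classification of cohomological invariants of $(\mu_p)^s$ in the target cohomology theory: for $H^{r,r}$ this is Theorem \ref{vialops}, and for $H^{r+1,r}$ it is Theorem \ref{abgp}. Crucially, these classifications have the same combinatorial indexing by subsets $I\subseteq\{1,\ldots,s\}$; only the coefficient groups differ by a uniform shift of the first index, from $H^{j,j}$ to $H^{j+1,j}$.

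I would first dispose of the small cases. For $m=0$, naturality forces an operation $\F_p\to H^{r+1,r}$ to pick $p$ classes from $H^{r+1,r}(k)$. For $m=1$, the identification $H^{1,1}(F)\cong H^1(F,\mu_p)$ shows that operations $H^{1,1}\to H^{r+1,r}$ are exactly the cohomological invariants $\Inv_k^{r+1,r}(\mu_p)$, which Proposition \ref{mup} computes as $H^{r+1,r}(k)\oplus H^{r,r}(k)$. For $m\geq 2$, I would evaluate a given operation $u$ on the universal $n$-fold sum of symbols $\xi_n=\sum_{i=1}^n\{x_{i1},\ldots,x_{im}\}\in H^{m,m}(L_n)$, where $L_n=k(x_{ij})$. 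The composition $(y_{ij})\mapsto u(\xi_n(y))$ is a cohomological invariant of $(\mu_p)^{nm}$ with values in $H^{r+1,r}$, so Theorem \ref{abgp} writes it as $\sum_I c_I\prod_{(i,j)\in I}\{y_{ij}\}$ with $c_I\in H^{r-|I|+1,r-|I|}(k)$. The remaining constraints---$S_n$-invariance across the $n$ symbols, $(S_m)^{\times n}$-symmetry (with sign when $p$ is odd and $m$ is odd) within each symbol, the Steinberg vanishing obtained by specializing $x_{i,1}=a$ and $x_{i,2}=1-a$, and compatibility between $u(\xi_n)$ and $u(\xi_{n+1})$ under $x_{n+1,j}=1$---then force the $c_I$ to assemble into either the divided-power shape $\sum_i c_i\gamma_i(x)$ with $c_i\in H^{r+1-im,r-im}(k)$, or the linear shape $c+ex$ when signs prohibit higher divided powers.

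The main obstacle is verifying that every step of Vial's combinatorial manipulation is genuinely formal in the choice of target, depending only on the shape of $\Inv_k^{*+1,*}((\mu_p)^{nm})$ and not on features specific to Milnor $K$-theory such as its ring structure. Uniqueness of the coefficients $c_I$ requires that multiplication by distinct symbols $\{y_{ij}\}$ is injective on the appropriate quotients of $H^{r+1,r}$; this can either be read off from Theorem \ref{abgp} directly, or checked using the residue machinery of Theorems \ref{filtration} and \ref{k(t)} applied to $L_n$. Translating Vial's Steinberg-based identities is the most delicate point, since those identities arise from relations in the source rather than from any structure on the target; but once extracted, the resulting constraints on the $c_I$ match Vial's by formal symbolic correspondence, yielding the stated description.
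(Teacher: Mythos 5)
Your proposal is correct and follows essentially the same route as the paper: dispose of $m=0$ and $m=1$ (the latter via Proposition \ref{mup}), then for $m\geq 2$ evaluate the operation on sums of $n$ symbols, classify the result as an invariant of $(\mu_p)^{nm}$ via Theorem \ref{abgp}, and use the uniqueness of the coefficients together with the vanishing of a symbol when some slot equals $1$ and the (anti)symmetry under permuting the summands to force the divided-power (or, for $p$ odd and $m$ odd, linear) shape. The only cosmetic difference is that the paper does not need the Steinberg specialization $x_{i,1}=a$, $x_{i,2}=1-a$; the specializations to $1$ and the permutation symmetries already suffice.
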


\begin{theorem}
\label{etaletoMilnor}
For a field $k$ of characteristic $p>0$, every operation
$H^{m+1,m}\arrow H^{r,r}$ on fields over $k$ is constant.
In particular, every normalized operation is zero.
\end{theorem}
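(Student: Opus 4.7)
The plan is to encode an operation $u\colon H^{m+1,m}\to H^{r,r}$ as a collection of cohomological invariants of the group schemes $(\Z/p)^N\times(\mu_p)^{Nm}$ and apply the earlier calculations. The key observation is that, by Proposition \ref{smoothprod} (applied $N$ times, since each $\Z/p$ factor is smooth) combined with Theorem \ref{vialops}, every $H^{r,r}$-valued invariant of $(\Z/p)^N\times(\mu_p)^{Nm}$ has the form $\sum_{I\subset\{1,\ldots,Nm\}}c_I\prod_{(j,i)\in I}\{b_{ji}\}$ with $c_I\in H^{r-|I|,r-|I|}(k)$; in particular, it depends only on the $\mu_p$-torsors, not on the $\Z/p$-torsors.

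First, I will reduce to sums of symbols: since $H^{m+1,m}(F)$ is generated by symbols $[a,b_1,\ldots,b_m\}$ (section \ref{background}), every class is a finite sum of such symbols, so it suffices to prove $u$ is constant on arbitrary sums $\sum_{j=1}^N[a_j,b_{j1},\ldots,b_{jm}\}$. Second, for each $N\geq 1$ I will verify that the symbol map $([a],\{b_i\})\mapsto [a,b_1,\ldots,b_m\}$ descends to a well-defined natural transformation from $H^1(-,\Z/p)\times H^1(-,\mu_p)^m$ to $H^{m+1,m}$, and then define a cohomological invariant
$$\Phi_N\colon H^1\bigl(-,(\Z/p)^N\times(\mu_p)^{Nm}\bigr)\longrightarrow H^{r,r}$$
by $\Phi_N([a_j],\{b_{ji}\})=u\bigl(\sum_{j=1}^N[a_j,b_{j1},\ldots,b_{jm}\}\bigr)$.

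Third, I will apply the structural description recalled above: $\Phi_N$ is independent of the $[a_j]$'s. Specializing all $a_j$ to $0$, each symbol $[0,b_{j1},\ldots,b_{jm}\}$ vanishes in $H^{m+1,m}$, so $\Phi_N$ equals the constant value $c:=u(0)\in H^{r,r}(k)$ in this specialization. By the uniqueness of the coefficients $c_I$ in Theorem \ref{vialops}, this forces $c_\emptyset=c$ and $c_I=0$ for $I\neq\emptyset$, i.e.\ $\Phi_N\equiv c$. Hence $u$ takes the constant value $c$ on every sum of symbols, and therefore on all of $H^{m+1,m}$.

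The only point requiring verification is the well-definedness of the symbol map, namely that $[a,b_1,\ldots,b_m\}$ is independent of the chosen representatives of $[a]\in H^{1,0}(F)$ and $\{b_i\}\in H^{1,1}(F)$. This reduces to the two observations that $[u^p-u,b_1,\ldots,b_m\}$ equals the class of $\mathcal{P}(u(db_1/b_1)\wedge\cdots\wedge(db_m/b_m))$, which is $0$ in the cokernel of $\mathcal{P}$, and that $d(b_ic^p)/(b_ic^p)=db_i/b_i$ in characteristic $p$. Once these are in place, the rest is a direct application of the quoted results, with no further analytical difficulty.
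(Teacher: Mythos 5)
Your proposal is correct and follows essentially the same route as the paper: restrict $u$ to sums of $N$ symbols to get an invariant of $(\Z/p)^N\times(\mu_p)^{Nm}$, use Proposition \ref{smoothprod} to see it is independent of the $\Z/p$-coordinates, and then specialize all $a_j$ to $0$ so the input vanishes. The extra appeal to the uniqueness of coefficients in Theorem \ref{vialops} is harmless but unnecessary, since independence of the $[a_j]$ already forces $\Phi_N([a_j],\{b_{ji}\})=\Phi_N([0],\{b_{ji}\})=u(0)$.
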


\begin{theorem}
\label{operations}
For a field $k$ of characteristic $p>0$ and a natural number $m$,
every operation
$H^{m+1,m}\arrow H^{r+1,r}$ on fields over $k$ is of the form
$u(x)=c+ex$ for some (unique) elements
$c\in H^{r+1,r}(k)$ and $e\in H^{r-m,r-m}(k)$.
In particular, every normalized operation is additive.
\end{theorem}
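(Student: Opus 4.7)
My plan is to study an arbitrary operation $u\colon H^{m+1,m}\to H^{r+1,r}$ by restricting it to generic sums of symbols and interpreting the result via the classification of cohomological invariants of $(\Z/p)^N\times(\mu_p)^{Nm}$ supplied by Theorem~\ref{abgp}. Setting $c=u(0)\in H^{r+1,r}(k)$ and replacing $u$ by $u-c$, I may assume $u$ is normalized; it then suffices to produce $e\in H^{r-m,r-m}(k)$ with $u(x)=ex$.

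The key intermediate step is to evaluate $u$ on the generic symbol $[a,b_1,\ldots,b_m\}$. By naturality, the assignment $([a],\{b_1\},\ldots,\{b_m\})\mapsto u([a,b_1,\ldots,b_m\})$ is a cohomological invariant of $\Z/p\times(\mu_p)^m$ with values in $H^{r+1,r}$, and Theorem~\ref{abgp} expresses it uniquely as
$$\sum_I c_I\prod_{i\in I}\{b_i\}+[a]\sum_I e_I\prod_{i\in I}\{b_i\}$$
with $c_I\in H^{r-|I|+1,r-|I|}(k)$ and $e_I\in H^{r-|I|,r-|I|}(k)$. Substituting $a=0$ kills the symbol, and the uniqueness of the decomposition for invariants of $(\mu_p)^m$ forces every $c_I=0$. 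For each index $j$, substituting $b_j=1$ also kills the symbol and the factor $\{b_j\}$; re-reading the surviving identity as an invariant of $\Z/p\times(\mu_p)^{m-1}$ and applying uniqueness once more, I conclude $e_I=0$ whenever $j\notin I$. Ranging over $j$ leaves only $I=\{1,\ldots,m\}$, and with $e:=e_{\{1,\ldots,m\}}$ I obtain $u([a,b_1,\ldots,b_m\})=e\cdot[a,b_1,\ldots,b_m\}$.

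To promote this to arbitrary elements, I use that every class in $H^{m+1,m}(F)$ is a finite sum of symbols (from the description by differential forms in Section~\ref{background}). For the generic sum $\xi_N=\sum_{i=1}^N[a_i,b_{i,1},\ldots,b_{i,m}\}$, the element $u(\xi_N)$ is a cohomological invariant of $(\Z/p)^N\times(\mu_p)^{Nm}$. Theorem~\ref{abgp} gives a unique expression of the form $\sum_I c_I\prod\{b_\cdot\}+\sum_{j=1}^N[a_j]\,E_j(b_\cdot)$; crucially, no term in $[a_j][a_{j'}]$ with $j\neq j'$ can appear, because such a product lies in $H^{2,0}(k)=0$, since fields of characteristic $p$ have $p$-cohomological dimension at most $1$. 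Setting $a_{j'}=0$ for all $j'\neq j$ collapses $\xi_N$ to the single symbol $[a_j,b_{j,\cdot}\}$; the formula from the preceding step and uniqueness of coefficients then force $c_I=0$ and $E_j(b_\cdot)=e\prod_i\{b_{j,i}\}$, yielding $u(\xi_N)=e\xi_N$.

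I expect the third step to be the main obstacle: the passage from single symbols to arbitrary sums requires invoking the classification for the larger product group and carefully matching coefficients. The saving grace is precisely the vanishing of $H^{2,0}(k)$, which forbids nonlinear cross terms and thereby reduces every invariant in the relevant bidegree to something linear in the $[a_j]$'s. This is exactly what distinguishes the present situation from Vial's Theorem~\ref{vialthm} on $H^{m,m}\to H^{r,r}$, where divided power operations produce genuinely nonlinear operations.
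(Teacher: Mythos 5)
Your argument is correct and follows essentially the same route as the paper: restrict the operation to generic sums of symbols, invoke Theorem \ref{abgp} for $(\Z/p)^N\times(\mu_p)^{Nm}$, and use the vanishing specializations $a_i=0$ and $b_{ij}=1$ together with the uniqueness of coefficients to kill all terms except the linear one. The only cosmetic differences are that you treat the single-symbol case first and then specialize the $N$-symbol case down to it (where the paper handles $N$ symbols directly and uses permutation symmetry to equate the $e_i$), and your remark that $H^{2,0}(k)=0$ forbids cross terms is true but redundant, since Theorem \ref{abgp} already presents the classification with no such terms.
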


\begin{proof} (Theorem \ref{Milnortoetale})
In the notation of section \ref{background},
every element of $H^{m+1,m}(F)$ (for a field $F$ over $k$) can be written
as a finite sum of symbols
$$x=\sum_{i=1}^n [a_i,b_{i1},\ldots,b_{im}\}$$
with $a_i\in F$ and $b_{ij}\in F^*$.
Moreover, this expression in $H^{m+1,m}(F)$ only depends on the
classes of $a_i$ in $F/\mathcal{P}(F)=H^{1,0}(F)$ and $b_{ij}$
in $(F^*)/(F^*)^p=H^{1,1}(F)$. Here $\mathcal{P}(a)=a^p-a$.

Let $u$ be an operation from $H^{m,m}$ to $H^{r+1,r}$ on fields
over $k$. The case $m=0$ is easy, since $H^{0,0}(F)\cong \F_p$
for every field $F$ over $k$. So assume that $m$ is positive.
If $m=1$, then an operation from $H^{1,1}$ to $H^{r+1,r}$ is the same
as an invariant of the group scheme $\mu_p$ over $k$ with
values in $H^{r+1.r}$, and these are described in Proposition
\ref{mup}. So we can assume that $m$ is at least 2.

Let $n$ be a positive integer, and write
$\vec{n}$ for the set $\{1,\ldots,n\}$. Applying the operation
$u$ to sums of
$n$ symbols gives an invariant of the group scheme $(\mu_p)^{mn}$
over $k$ with values in $H^{r+1,r}$. By Theorem \ref{abgp}, this has the form,
for $x=\sum_{i=1}^n\{b_{i1},\ldots,b_{im}\}$:
$$u(x) =\sum_{T\subset
\vec{n}\times\vec{m}}c_T
\prod_{(i,j)\in T}\{b_{ij}\}$$
for some (unique) elements $c_T\in H^{r-|T|+1,r-|T|}(k)$.

If $b_{ij}=1\in k^*$ for some pair $(i,j)$, then $\{b_{i1},\ldots,b_{im}\}=0$,
and so the operation above must be independent of $b_{il}$ for all
$l\neq j$. By the uniqueness in Theorem \ref{abgp}, it follows that
$u$ must have the form:
$$u\bigg( \sum_{i=1}^n\{b_{i1},\ldots,b_{im}\}\bigg) =\sum_{T\subset
\vec{n}}c_T
\prod_{i\in T}\{b_{i1},\ldots,b_{im}\}.$$
Also, the operation must be independent of the order of the $n$ summands
in $x$. If $p=2$, or if $p>2$ and $m$
is even, then multiplication of elements of $H^{m,m}$ is commutative.
In that case, $u$ must have the form:
$$u\bigg( \sum_{i=1}^n\{b_{i1},\ldots,b_{im}\}\bigg) =\sum_{j=0}^n c_j
\sum_{\substack{T\subset \vec{n}\\|T|=j}}
\prod_{i\in T}\{b_{i1},\ldots,b_{im}\}.$$
Thus every operation is a linear combination (with coefficients
in $H^{*+1,*}$) of divided power operations. Conversely, divided
power operations are well-defined under our assumptions (that
$m\geq 2$ and, if $p$ is odd, then $m$ is even),
by Theorem \ref{vialthm}. Here we have considered
operations on elements of $H^{m,m}$ written as a sum of a fixed number
of symbols, but (since we can take one symbol to be zero) these descriptions
must be compatible as the number of symbols varies. This completes
the proof under the assumptions mentioned.

There remains the case where $p>2$ and $m$ is odd. Here multiplication
of elements of $H^{m,m}$ is anti-commutative. In this case, since the operation
$u(x)$ must be unchanged
after switching two summands of $T$, we must have (in the notation above)
$c_T=-c_T$ for every set $T\subset \{1,\ldots,n\}$ of order at least 2.
Since $c_T$ is an element of an
$\F_p$-vector space with $p$ odd, that means that $c_T=0$ if $T$
has order at least 2. So $u$ has the form
$$u\bigg( \sum_{i=1}^n\{b_{i1},\ldots,b_{im}\}\bigg) =c+\sum_{i=1}^n e_i
\{b_{i1},\ldots,b_{im}\}.$$
Using again that $u$ is unchanged by permuting the summands,
we have $e_1=\cdots=e_n$. So $u$ has the form
$$u(x)=c+ex$$
for some $c,e\in H^{*+1.*}(k)$.
\end{proof}

\begin{proof}
(Theorem \ref{etaletoMilnor})
Let $u$ be an operation from $H^{m+1,m}$ to $H^{r,r}$ on fields
over $k$. Applying $u$ to sums of $n$ symbols,
$$u\bigg( \sum_{i=1}^n [a_i,b_{i1},\ldots,b_{im}\}\bigg) $$
gives an invariant
of the group scheme $(\Z/p)^n\times (\mu_p)^{mn}$ over $k$
with values in $H^{r,r}$. By Proposition \ref{smoothprod},
such an invariant must be independent of $a_1,\ldots,a_n\in H^{1,0}(k)$.
But if we take those elements to be zero, then the element
$\sum_{i=1}^n [a_i,b_{i1},\ldots,b_{im}\}$ in $H^{m+1,m}$
is zero. So every operation from $H^{m+1,m}$ to $H^{r,r}$
is constant.
\end{proof}

\begin{proof}
(Theorem \ref{operations})
Let $u$ be any operation from $H^{m+1,m}$ to $H^{r+1,r}$
on fields over $k$. For a positive integer $n$,
restricting $u$ to sums of $n$ symbols
gives an invariant of the group scheme $(\Z/p)^n\times (\mu_p)^{mn}$
over $k$ with values in $H^{r+1,r}$. By Theorem \ref{abgp},
we can write $u$ on an element $x=\sum_{i=1}^n [a_i,b_{i1},\ldots,b_{im}\}$
as
$$u(x)=
\sum_{T\subset \vec{n}\times \vec{m}}c_T\prod_{(i,j)\in T}\{b_{ij}\}
+\sum_{l=1}^n [a_l]\sum_{T\subset \vec{n}\times \vec{m}}e_{l,T}
\prod_{(i,j)\in T}\{b_{ij}\}$$
for some (unique) elements
$c_T$ in $H^{r-|T|+1,r-|T|}(k)$ and $e_{l,T}$ in $H^{r-|T|,r-|T|}(k)$.

In fact, all coefficients $c_T$ with $T$ nonempty are zero, because
the input $x$ in $H^{r+1,r}$ is zero if all $a_i$ are zero,
no matter what the $b_{i,j}$ are.
Thus $u$ can be written as:
$$u\bigg( \sum_{i=1}^n [a_i,b_{i1},\ldots,b_{im}\}\bigg) =
c+\sum_{l=1}^n [a_l]\sum_{T\subset \vec{n}\times\vec{m}}e_{l,T}
\prod_{(i,j)\in T}\{b_{ij}\}.$$

Next, let $1\leq i\leq m$. Note that the term
$[a_i,b_{i1},\ldots,b_{im}\}$ in $x$ is zero if $a_i$ is zero
or if any of $b_{i1},\ldots,b_{im}$ is 1.
So, if $a_i$ is 0, then $u(x)$ must be independent of
$b_{i1},\ldots,b_{im}$; and if some $b_{ij}$ is equal to 1,
then $u(x)$ must be independent of $a_i$. Using the uniqueness
of the coefficients (from Theorem \ref{abgp}) again, it follows
that $e_{s,T}$ is zero for all $T\neq \{s\}\times \vec{m}$.
That is, $u$ can be written as:
$$u\bigg( \sum_{i=1}^n [a_i,b_{i1},\ldots,b_{im}\}\bigg) =
c+\sum_{i=1}^n
[a_i,b_{i1},\ldots,b_{im}\}e_i$$
for some elements $c$ in $H^{r+1,r}(k)$ and $e_i$
in $H^{r-m,r-m}(k)$. 

Finally, the operation $u$ must be unchanged if we permute
the $n$ summands in the input. It follows that $e_1=\cdots=e_n$.
That is, the operation $u$ is given on sums of $n$ symbols by
$$u(x)=c+xe$$
for some (uniquely determined) $c$ in $H^{r+1,r}(k)$
and $e$ in $H^{r-m,r-m}(k)$. Since we can take one symbol to be zero,
these elements $c$ and $e$
must be unchanged if we change the number $n$ of symbols in $x$.
That is,
the operation $u$ is given by $u(x)=c+xe$ on all of $H^{m+1,m}(F)$,
for fields $F$ over $k$.
\end{proof}

\section{Invariants of the even orthogonal group in characteristic 2}
\label{orthosect}

Define
a quadratic form $q_0$ on a vector space $V$ over a field $k$ to be
{\it nonsingular }if the orthogonal complement $V^{\perp}\subset V$
has dimension at most 1 and $q_0$ is nonzero at each nonzero element of
$V^{\perp}$. {\it Quadratic forms
will be understood to be nonsingular in this paper. }One reason
for the importance of this class
of quadratic forms is that the simple algebraic groups
of type $B_n$ and $D_n$ over any field are essentially
automorphism groups of nonsingular
quadratic forms. Note that
if $k$ has characteristic 2, then the bilinear form
$b_0(x,y)=q_0(x+y)-q_0(x)-q_0(y)$ associated
to $q_0$ is alternating. So $V^{\perp}$ has dimension 0 if $q_0$
has even dimension and dimension 1 if $q_0$ has odd dimension.

Let $q_0$ be a quadratic form of even dimension
over a field $k$ of characteristic 2. In Theorems
\ref{evenorthog} and \ref{oplus}, we compute
the cohomological invariants for the orthogonal
group $O(q_0)$ and its identity
component, which we call $SO(q_0)$ (even though $O(2n)$
is contained in $SL(2n)$ in characteristic 2).
We consider
the invariants for values in $H^{m+1,m}$; since these group schemes
are smooth, their invariants in $H^{m,m}$ are constant
by Theorem \ref{smoothmilnor}. In short, the fundamental
invariants are the discriminant (or Arf invariant)
and the Clifford invariant, which are described in the proof
of Theorem \ref{evenorthog}.

\begin{theorem}
\label{evenorthog}
Let $k$ be a field of characteristic 2,
$m$ an integer. Let $q_0$ be a quadratic form of dimension $2n$
over $k$ with $n\geq 1$.
Then
$$\Inv_k^{m+1,m}(O(q_0))\cong H^{m+1,m}(k)\oplus H^{m,m}(k)\oplus
H^{m-1,m-1}(k).$$
Explicitly, we can view the invariants for $O(q_0)$
as the invariants of
quadratic forms of dimension $2n$ over fields
$F/k$. Every invariant has the form
$$u(q)=c+\disc(q)e+\clif(q)f$$
for some (uniquely determined) $c\in H^{m+1,m}(k)$,
$e\in H^{m,m}(k)$, and $f\in H^{m-1,m-1}(k)$.
\end{theorem}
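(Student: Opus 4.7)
First I would produce the three types of invariants explicitly. The Arf (discriminant) invariant comes from the Dickson homomorphism $O(q_0)\arrow \Z/2$, giving $\disc(q)\in H^{1,0}(F)$; concretely, if $q\simeq \perp_{i=1}^n [a_i,b_i]$, then $\disc(q)$ is the class of $\sum_i a_ib_i$ modulo $\mathcal{P}(F)$. The Clifford invariant $\clif(q)\in H^{2,1}(F)\cong \Br(F)[2]$ is the Brauer class of the (even) Clifford algebra $C(q)$. For $c\in H^{m+1,m}(k)$, $e\in H^{m,m}(k)$, $f\in H^{m-1,m-1}(k)$, the expression $c+\disc(q)e+\clif(q)f$ is a well-defined invariant of bidegree $(m+1,m)$, and specializing at explicit diagonal forms over $k$ (first at a hyperbolic form to isolate $c$, then at forms with varying Arf class to isolate $e$, then at forms with varying Clifford class to isolate $f$) shows that $c$, $e$, $f$ are uniquely determined, yielding the injection $H^{m+1,m}(k)\oplus H^{m,m}(k)\oplus H^{m-1,m-1}(k)\inj \Inv_k^{m+1,m}(O(q_0))$.

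For the upper bound, I would use the versal torsor given by the generic diagonal form $q=\perp_{i=1}^n[a_i,b_i]$ over $F_0=k(a_1,b_1,\ldots,a_n,b_n)$, arising from the free action of $O(q_0)$ on an open locus in the representation parameterizing ordered orthogonal decompositions of $V$ into binary subspaces. By Theorem \ref{bm}, a normalized invariant $u$ of $O(q_0)$ is determined by its value on this generic form as a balanced class in $H^{m+1,m}(F_0)$. The stabilizer of the diagonal decomposition is $H=(\prod_i O([a_i,b_i]))\rtimes S_n\subset O(q_0)$, and restriction yields an injection $\Inv_k^{m+1,m}(O(q_0))\inj \Inv_k^{m+1,m}(H)$ because every quadratic form of dimension $2n$ on any extension field diagonalizes into binary forms over a further finite extension.

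Over the separable closure each $O([a_i,b_i])$ becomes the split group $\mathbb{G}_m\rtimes \Z/2$, and Hilbert 90 kills the $\mathbb{G}_m$-part; tracking this, the $H$-torsor data reduces to a torsor for $((\Z/2)\times \mu_2)^n \rtimes S_n$, corresponding to the $n$ pairs of symbols $([a_i],\{b_i\})\in H^{1,0}(F)\times H^{1,1}(F)$ permuted by $S_n$. Theorem \ref{abgp} describes invariants of $(\Z/2)^n\times (\mu_2)^n$ in $H^{m+1,m}$ explicitly as polynomials in these symbols. Taking $S_n$-invariants, following the strategy of Theorem \ref{symm2} (restriction to the subgroup $\langle (12)\rangle\times S_{n-2}$ together with Weyl-group symmetry), I would show that the surviving $S_n$-symmetric contributions are exactly constants plus the first-order elementary symmetric function (recovering $\disc\cdot e$) and the second-order symmetric combination (recovering $\clif\cdot f$).

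The main obstacle is ruling out higher-degree symmetric products of three or more binary symbols, which would be the characteristic-$2$ analogs of the Stiefel--Whitney classes $w_k$ for $k\geq 3$ that survive in odd characteristic. I expect this to follow, in parallel with the proof of Theorem \ref{symm2}, by combining the balancedness condition of Theorem \ref{bm} with the tame/wild ramification analysis of Theorem \ref{filtration}: one exhibits a degeneration of the generic form $q$ along a divisor in $(U\times U)/O(q_0)$ where a higher symmetric polynomial in the $([a_i],\{b_i\})$ acquires nontrivial class in some $U_j/U_{j-1}$ with $j>0$, while $\disc$ and $\clif$ remain in the unramified subgroup. This characteristic-$2$ failure of the higher Stiefel--Whitney classes is the new phenomenon, distinguishing the present result from Serre's calculation in characteristic not $2$.
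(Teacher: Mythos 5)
There is a genuine gap at the step you yourself flag as ``the main obstacle'': ruling out the higher-degree symmetric products of the binary symbols. You propose to do this by a ramification/degeneration argument on $(U\times U)/O(q_0)$ using Theorems \ref{bm} and \ref{filtration}, but you only say you ``expect'' it to work and give no actual divisor or computation; as written this is a conjecture, not a proof. It is also not the mechanism that succeeds. Once you have restricted to $(\Z/2)^n\times(\mu_2)^n$ and written, via Theorem \ref{abgp},
$$u\Big(\sum_{i=1}^n b_i\langle\langle a_i]]\Big)=\sum_{I}c_I\prod_{i\in I}\{b_i\}+\sum_{j}[a_j]\sum_{I}e_{j,I}\prod_{i\in I}\{b_i\},$$
the higher terms die by elementary specializations of quadratic forms, with no further ramification input: setting all $a_i=0$ makes the form hyperbolic, so $u$ is constant and the uniqueness in Theorem \ref{abgp} forces $c_I=0$ for $I\neq\emptyset$; setting a single $a_j=0$ makes the $j$-th binary summand hyperbolic, so $u$ cannot depend on $b_j$, forcing $e_{l,I}=0$ whenever $j\in I$ and $l\neq j$. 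Permutation symmetry then equalizes the surviving coefficients, and the two remaining terms are exactly $\disc(q)e$ and $\clif(q)f$. All the tame/wild ramification analysis is already packaged inside Theorem \ref{abgp} (via Propositions \ref{mup} and \ref{zp}); invoking it again at the level of $O(q_0)$ is both unnecessary and unsubstantiated in your sketch.

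Two smaller points. First, ``Hilbert 90 kills the $\mathbb{G}_m$-part'' of $O(H)\cong\mathbb{G}_m\rtimes\Z/2$ is false as stated: the twisted forms of $\mathbb{G}_m$ that arise are norm-one tori of the Arf extension, whose $H^1$ is $F^*/N(E^*)$ and is generally nonzero --- indeed this is precisely where the $\mu_2$-parameter $\{b\}$ of a binary form lives. The correct (and simpler) statement is that every binary nonsingular form over $F$ is $b\langle\langle a]]$, so $H^1(F,\Z/2\times\mu_2)\arrow H^1(F,O(H))$ is surjective; one should restrict to the subgroup scheme $\Z/2\times\mu_2$ rather than pass through the semidirect product and $S_n$. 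Second, your hedge ``diagonalizes into binary forms over a further finite extension'' would not yield injectivity of restriction on invariants if the extension were actually needed; it is not needed, since in characteristic $2$ every nonsingular even-dimensional form decomposes into binary forms over the ground field itself (\cite[Corollary 7.3.2]{EKM}), which is what makes $H^1(F,(\Z/2)^n\times(\mu_2)^n)\arrow H^1(F,O(nH))$ surjective over every $F$.
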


Note the contrast with Serre's calculation in characteristic not 2:
for a quadratic form $q_0$ of dimension $m$
over a field $k$ of characteristic
not 2, 
$\Inv_k^*(O(q_0),\F_2)$ is a free module over $H^*(k,\F_2)$ with basis
the Stiefel-Whitney classes $1=w_0,w_1,w_2,
\ldots,w_m$ \cite[Theorem 17.3]{GMS}. (In characteristic not 2,
the weight makes no difference; that is, the \etale motivic cohomology
group $H^i_{\et}(F,\Z/2(j))$ is the same for all $j\geq 0$.)

In characteristic 2, Theorem \ref{evenorthog} says that
there are analogs of $w_1$ (the discriminant
or Arf invariant in $H^{1,0}$) and $w_2$ (the Clifford invariant
in $H^{2,1}$), but no analogs of the higher Stiefel-Whitney classes.
This is a bit disappointing, but note that even in characteristic not 2,
the first two Stiefel-Whitney classes of a quadratic form are far
more important than the higher ones. For example, if $F$ is a field
of characteristic not 2 in which $-1$ is a square, then $w_1$ and $w_2$
give isomorphisms $w_1\colon I/I^2\arrow H^1(F,\Z/2)$ 
and $w_2\colon I^2/I^3\arrow H^2(F,\Z/2)$, but all Stiefel-Whitney classes
of positive degree vanish on $I^3$ \cite[Exercise 5.14]{EKM}.
Thus, for $j\geq 3$, the isomorphism
$I^j/I^{j+1}\cong H^j(F,\Z/2)$ proved by Orlov-Vishik-Voevodsky \cite{OVV}
does not come from invariants
defined on all quadratic forms of a given dimension, but only
from invariants on some subclass of forms.

This line of thought suggests looking at the invariants
of the connected group $SO(q)$
and its double cover $\Spin(q)$ in characteristic 2.
In this paper, we only find the invariants for $SO(q)$. 
We know that $\Spin(q)$ will have a nontrivial
invariant in $H^{3,2}$ by Kato's isomorphism
$$I^{n+1}_q(k)/I^{n+2}_q(k)\cong H^{n+1,n}(k),$$
applied in the case $n=2$
\cite{Katoquadratic}. (We use the notation of \cite[section 9.B]{EKM}:
$I_q(k)$ is the quadratic Witt group, which is a module over the bilinear
Witt ring $W(k)$, and $I^n_q(k):=I^{n-1}I_q(k)$ for $n\geq 1$. For
the hyperbolic form $q_0=nH$, torsors for $\Spin(q_0)$ over $k$
give quadratic
forms in $I^3_q(k)$.)
This invariant for $\Spin(q)$ was
generalized by Merkurjev to the Rost invariant
of any simply connected group \cite[Part 2, Theorem 9.11]{GMS}.
For $n\leq 14$, some higher-degree invariants
of $\Spin(n)$ have been constructed by Rost and Garibaldi
in characteristic not 2 and by the author in characteristic 2
\cite[section 23]{Garibaldi}, \cite{Totarospin}.
It would be interesting to construct
invariants for spin groups of higher dimensions.

\begin{proof}
(Theorem \ref{evenorthog})
For any field $F$ over $k$, $H^1(F,O(q_0))$ can be identified
with the set of isomorphism classes of quadratic
forms over $F$ of dimension $2n$ \cite[equation 29.28]{KMRT}.
So computing the invariants
for $O(q_0)$ amounts to computing the invariants for quadratic
forms of dimension $2n$. In particular, this description
shows that the invariants of $O(q_0)$ 
are the same for all quadratic
forms $q_0$ of dimension $2n$ over $k$. So we can assume that
$q_0$ is the simplest quadratic form, $q_0=nH$, the orthogonal
direct sum of $n$ copies of the hyperbolic plane $q_H(x,y)=xy$.

The group scheme $\Z/2$ is contained in $O(H)$ by switching $x$ and $y$,
and this commutes with the action of the group scheme
$\mu_2$ by scalar multiplication. So we have a subgroup
$\Z/2\times \mu_2$ in $O(H)$, and hence a subgroup $(\Z/2)^n\times (\mu_2)^n$
in $O(nH)$. Let $F$ be a field over $k$.
For elements $a\in F$ and $b\in F^*$, which give
a $\Z/2$-torsor $[a]$ and a $\mu_2$-torsor $(b)$ over $F$,
the associated 2-dimensional quadratic form
(given by $H^1(F,\Z/2\times\mu_2)\arrow H^1(F,O(H))$
can be written as $b\langle\langle a]]=b[1,a]=bx^2+bxy+aby^2$.
Every quadratic
form of dimension 2 over $F$ arises this way; that is, every
form of dimension 2
is a scalar multiple of a 1-fold Pfister form \cite[section 9.B]{EKM}.
Moreover, every quadratic form over $F$ of dimension $2n$ is an orthogonal
direct sum of 2-dimensional forms \cite[Corollary 7.3.2]{EKM}, and so
$$H^1(F,(\Z/2)^n\times (\mu_2)^n)\arrow H^1(F, O(nH))$$
is surjective.

As a result, for the quadratic form $q_0=nH$, the restriction
$$\Inv^{m+1,m}_k(O(q_0))\arrow \Inv^{m+1,m}_k((\Z/2)^n\times (\mu_2)^n)$$
is injective. By Theorem \ref{abgp}, every invariant for $O(q_0)$
over $k$ with values in $H^{m+1,m}$ can be written as:
$$u\bigg( \sum_{i=1}^n b_i\langle\langle a_i]]\bigg) =
\sum_{I\subset \{1,\ldots,n\}}c_I\prod_{i\in I}\{b_i\}
+\sum_{j=1}^n [a_j]\sum_{I\subset \{1,\ldots,n\}}e_{j,I}\prod_{i\in I}\{b_i\}
$$
for some (uniquely determined) $c_I\in H^{m-|I|+1,m-|I|}(k)$ and
$e_{j,I}\in H^{m-|I|,m-|I|}(k)$.

If $a_1=\cdots=a_n=0$, then the quadratic form
$\sum_i b_i\langle\langle a_i]]$ is hyperbolic. So the invariant above
is constant (independent of $b_1,\ldots,b_n\in k^*$) in that case.
By the uniqueness in Theorem \ref{abgp}, it follows that
$c_I=0$ for all $I\neq \emptyset$.

Next, if $a_j=0$, then the quadratic form
$b_j\langle\langle a_j]]$ is hyperbolic, and so
the invariant above is independent of $b_j\in k^*$. So $e_{l,I}=0$
unless $I$ is empty or $I=\{l\}$. Thus the invariant has the form
$$u\bigg( \sum_{i=1}^n b_i\langle\langle a_i]]\bigg) =
c+\sum_{j=1}^n [a_j]e_j+\sum_{j=1}^n [a_j,b_j\}f_j,$$
for some (uniquely determined) $c\in H^{m+1,m}(k)$,
$e_j\in H^{m,m}(k)$, and $f_j\in H^{m-1,m-1}(k)$.

The invariant $u$ must be invariant under permuting the $n$ pairs
$(a_1,b_1),\ldots,(a_n,b_n)$. It follows that $e_1=\cdots=e_n$
and $f_1=\cdots=f_n$. That is,
$$u\bigg( \sum_{i=1}^n b_i\langle\langle a_i]]\bigg) =
c+\bigg[\sum_{j=1}^n [a_j]\bigg] e+\bigg[\sum_{j=1}^n [a_j,b_j\}
\bigg]f$$
for some (uniquely determined) $c\in H^{m+1,m}(k)$,
$e\in H^{m,m}(k)$, and $f\in H^{m-1,m-1}(k)$. 

The {\it discriminant }(or {\it Arf invariant}) $\disc(q)$ of the quadratic
form $q=\sum_{i=1}^n b_i\langle\langle a_i]]$ is $\sum_{j=1}^n a_j\in
k/\mathcal{P}(k)=H^{1,0}(k)$ \cite[Example 13.5]{EKM}.
Also, the {\it Clifford invariant }$\clif(q)$ is
$\sum_{j=1}^n [a_j,b_j\} \in H^{2,1}(k)=\Br(k)[2]$
\cite[section 14]{EKM}. Since these
are known to be invariants of quadratic forms,
we have determined all the invariants
for $O(q_0)$.
\end{proof}

\section{Invariants of $O(2n+1)$ and $SO(2n+1)$}

Let $q_0$ be a quadratic form on a vector space $V$
of dimension $2n+1$ over a field $k$ of characteristic 2.
(Quadratic forms are understood to be nonsingular
in the sense of section \ref{orthosect}.)
Then the orthogonal group $O(q_0)$ is not smooth
over $k$; it is a product $\mu_2\times SO(q_0)$, with
$SO(q_0)$ smooth and connected over $k$.
In this section, we determine the invariants
for both $O(q_0)$ and $SO(q_0)$. Note a difference
between even- and odd-dimensional quadratic forms in characteristic 2:
the discriminant of an odd-dimensional quadratic form lies
in $H^{1,1}(k)=H^1(k,\mu_2)$, not in $H^{1,0}(k)=H^1(k,\Z/2)$.

\begin{theorem}
\label{oddorthog}
Let $k$ be a field of characteristic 2, $n$ a positive integer.
Let $q_0$ be a quadratic form
of dimension $2n+1$ over $k$. For any integer $m$,
$$\Inv_k^{m+1,m}(O(q_0))\cong H^{m+1,m}(k)\oplus H^{m,m-1}(k)\oplus
H^{m-1,m-1}(k)\oplus H^{m-2,m-2}(k).$$
Explicitly, we can view the invariants of $O(q_0)$
as the invariants for
quadratic forms $q$ of dimension $2n+1$ over fields
$F/k$. Every invariant has the form
$$u(q)=c+\disc(q)e+\clif(q)f+\clif(q)\disc(q)g$$
for some (uniquely determined) $c\in H^{m+1,m}(k)$,
$e\in H^{m,m-1}(k)$, $f\in H^{m-1,m-1}(k)$,
and $g\in H^{m-2,m-2}(k)$.
\end{theorem}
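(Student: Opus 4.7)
The plan is to mirror the strategy of Theorem~\ref{evenorthog} in odd dimension. For any field $F/k$, $H^1(F, O(q_0))$ classifies nonsingular quadratic forms of dimension $2n+1$ over $F$, and every such form is isometric to
$$q \;\cong\; b_1\langle\langle a_1]] \perp \cdots \perp b_n\langle\langle a_n]] \perp \langle c\rangle$$
for some $a_i \in F$ and $b_i, c \in F^*$. Consequently the natural map $H^1(F, (\Z/2)^n \times (\mu_2)^{n+1}) \to H^1(F, O(q_0))$ is surjective, so restriction gives an injection
$$\Inv_k^{m+1,m}(O(q_0)) \hookrightarrow \Inv_k^{m+1,m}((\Z/2)^n \times (\mu_2)^{n+1}).$$
By Theorem~\ref{abgp}, any invariant $u$ on the right is uniquely expressed as a linear combination of monomials in the symbols $[a_j]$, $\{b_i\}$, and $\{c\}$ with coefficients in the appropriate groups $H^{*,*}(k)$.

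Next I would impose the standard vanishing constraints. When $a_j = 0$ the factor $b_j\langle\langle a_j]]$ is hyperbolic, so $q$ is independent of $b_j$; uniqueness in Theorem~\ref{abgp} then forces the ``constant'' coefficients $c_I$ to vanish whenever $I \cap \{1,\ldots,n\} \neq \emptyset$, and the $[a_l]$-coefficients $e_{l,I}$ to vanish unless $I \subset \{l, n+1\}$. Applying permutation symmetry in the pairs $(a_l, b_l)$, the invariant reduces to
\begin{align*}
u \;=\; &c + e\{c\} + f\sum_l[a_l,b_l\} + g\{c\}\sum_l[a_l,b_l\} \\
&+ d_1\sum_l[a_l] + d_2\{c\}\sum_l[a_l],
\end{align*}
with $c \in H^{m+1,m}(k)$, $e \in H^{m,m-1}(k)$, $f \in H^{m-1,m-1}(k)$, $g \in H^{m-2,m-2}(k)$, and extra coefficients $d_1 \in H^{m,m}(k)$, $d_2 \in H^{m-1,m-1}(k)$.

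Identifying $\{c\} = \disc(q)$ and $\sum_l[a_l, b_l\} = \clif(q)$ as the classical discriminant and Clifford invariants (standard; see \cite[\S 13--14]{EKM}), the four coefficients $c, e, f, g$ reproduce exactly the invariants in the theorem. The main obstacle is to rule out the two extra terms: $\sum_l[a_l]$ is the Arf invariant of the even-dimensional complement $\sum b_l\langle\langle a_l]]$, and in characteristic~$2$ this complement is not uniquely determined by $q$ (Witt cancellation can fail for odd-dimensional forms). To force $d_1 = 0$ I would exhibit, over a sufficiently generic test field $F$, two decompositions $\sum b_i\langle\langle a_i]] \perp \langle c\rangle \cong \sum b_i'\langle\langle a_i']] \perp \langle c\rangle$ of the same form $q$ with $\sum a_i \neq \sum a_i'$ modulo $\mathcal P(F)$; an analogous (or derived) construction with $\{c\} \neq 0$ handles $d_2$. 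This rigidity argument is the hard part --- it is genuinely a characteristic-$2$ phenomenon, and everything above hinges on finding a concrete non-uniqueness witness.

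Finally, since $1$, $\disc(q)$, $\clif(q)$, and $\disc(q)\clif(q)$ are well known to define invariants of $O(q_0)$-torsors, these realize all the claimed coefficients, and uniqueness of $c, e, f, g$ follows from the injection established at the outset.
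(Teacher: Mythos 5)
Your overall strategy -- restrict to a product of copies of $\Z/2$ and $\mu_2$ that surjects onto $H^1(-,O(q_0))$, apply Theorem \ref{abgp}, and cut down by the vanishing and symmetry constraints -- is essentially the paper's (the paper peels off one factor at a time, restricting to $O(q_1)\times\mu_2$ and quoting Theorem \ref{evenorthog}, but that is only a difference of organization). However, there are two genuine problems. First, the step you yourself flag as ``the hard part'' is exactly the content of the proof, and it has a concrete, one-line witness that you need to supply: for any $b\in F^*$ and $a\in F$, the form $\langle b\rangle+b\langle\langle a]]$ is isotropic, hence isomorphic to $\langle b\rangle+H$ (this is the standard failure of Witt cancellation in characteristic 2, \cite[equation 8.7]{EKM}). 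Comparing $u$ on $\langle b\rangle+b\langle\langle a]]+(n-1)H$ and on $\langle b\rangle+nH$, and invoking the uniqueness of coefficients for invariants of $\Z/2\times\mu_2$ from Theorem \ref{abgp}, is what produces the needed relations. Without this (or some equivalent witness) the argument does not close.

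Second, and more seriously, your bookkeeping of the Clifford invariant is wrong, and as a result you are trying to prove relations that are false. For an odd-dimensional form $q=\langle c\rangle+r$ with $r=\sum_l b_l\langle\langle a_l]]$, the Clifford invariant is $\clif(q)=\clif(cr)=\clif(r)+\disc(r)\{c\}=\sum_l[a_l,b_l\}+\bigl(\sum_l[a_l]\bigr)\{c\}$ (see \cite[Corollary IV.7.3.2]{Knus}); it is \emph{not} $\sum_l[a_l,b_l\}$. Indeed $\clif(r)=\sum_l[a_l,b_l\}$ alone is not an invariant of $q$, because the complement $r$ is not determined by $q$: in the witness above, the two decompositions have $\clif(r)=[a,b\}$ and $\clif(r')=0$, which differ, while $\clif(r)+\disc(r)\{b\}$ agrees for both. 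Consequently the correct relations forced by the cancellation failure are $d_1=0$ and $d_2=f$ -- so that the terms $f\sum_l[a_l,b_l\}$ and $d_2\{c\}\sum_l[a_l]$ assemble into the single well-defined invariant $f\,\clif(q)$ -- not $d_1=d_2=0$ as you assert. An attempt to prove $d_2=0$ must fail (it would show the Clifford invariant is not an invariant), and the final identification of your four surviving coefficients with $c+\disc(q)e+\clif(q)f+\clif(q)\disc(q)g$ only works after this correction, using also that $\clif(q)\disc(q)=\clif(r)\{c\}$ since $\{c,c\}=0$.
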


\begin{proof}
Regardless of the choice of form $q_0$, the $O(q_0)$-torsors
over a field $F/k$ can be identified (up to isomorphism)
with the quadratic forms of dimension $2n+1$ over $F$.
Every nonsingular quadratic form on a vector space $V$
of dimension $2n+1$ over a field $F/k$ can be written
as the orthogonal direct sum of the 1-dimensional form $V^{\perp}$,
described by an element of $H^1(F,\mu_2)$, and a nonsingular
form of dimension $2n$. For an element $b_0$ in $F^*$, we write
$\langle b_0\rangle$ for the 1-dimensional quadratic form
$q(x)=b_0x^2$. So we can write $q_0=\langle b_0\rangle+q_1$
for some $b_0$ in $k^*$ and some nonsingular quadratic form $q_1$
over $k$ of dimension $2n$.
(Here $q_1$ is not uniquely determined by $q_0$.) Since every quadratic
form of dimension $2n+1$ over a field $F/k$ can be similarly decomposed
as $\langle b\rangle+r$,
the map $H^1(F,O(q_1)\times \mu_2)\arrow H^1(F,O(q_0))$
is surjective.

It follows that the restriction
$$\Inv^{m+1,m}_k(O(q_0))\arrow \Inv^{m+1,m}_k(O(q_1)\times \mu_2)$$
is injective. By
Theorems \ref{mupprod} and \ref{evenorthog}, it follows that
every invariant for $O(q_0)$ has the form
$$u(\langle b\rangle+r)=c+\disc(r)e+\clif(r)f
+\{b\}g+\disc(r)\{b\}h+\clif(r)\{b\}l$$
for some (unique) $c\in H^{m-1,m}(k)$,
$e\in H^{m,m}(k)$, $f\in H^{m-1,m-1}(k)$,
$g\in H^{m,m-1}(k)$, $h\in H^{m-1,m-1}(k)$,
and $l\in H^{m-2,m-2}(k)$.

For a field $F$ over $k$ and any $b$ in $F^*$ and $a_1$ in $F$, the quadratic
form $\langle b\rangle+b\langle\langle a_1]]$
is isotropic, by inspection, and so it is isomorphic
to $\langle b\rangle+H$, where $H$ is the hyperbolic plane.
(This is a known failure of cancellation for quadratic
forms in characteristic 2 \cite[equation 8.7]{EKM}.)
So the given invariant $u$ must take the same value on
$\langle b\rangle+b\langle\langle a_1]]+(n-1)H$
as on $\langle b\rangle+nH$. That is,
$$c+\{b\}g=c+[a_1]e+[a_1,b\}f+\{b\}g+[a_1,b\}h$$
as invariants of $\mu_p\times \Z/p$ (where we used that
$\{b,b\}=0$ in $H^{2,2}$). By the description of the invariants
for $\mu_p\times \Z/p$ in Theorem \ref{abgp},
it follows that $e=0$ and $f=h$.
Thus the invariant $u$ has the form
$$u(\langle b\rangle+r)=c+(\clif(r)+\disc(r)\{b\})f
+\{b\}g+\clif(r)\{b\}l$$
for some (unique) $c\in H^{m-1,m}(k)$,
$f\in H^{m-1,m-1}(k)$, $g\in H^{m,m-1}(k)$,
and $l\in H^{m-2,m-2}(k)$.

Here $\{b\}$ in $H^{1,1}$ is an invariant of $q=\langle b\rangle
+r$, the {\it
discriminant }$\disc(q)$ (called the ``half-discriminant''
in \cite[IV.3.1.3]{Knus}).
(It is clear that this is an invariant of $q$,
because it describes the restriction of $q$ to the 1-dimensional
subspace $V^{\perp}\subset V$.) The other known invariant of odd-dimensional
quadratic forms in characteristic 2
is the {\it Clifford invariant} in the Brauer group $H^{2,1}$,
given by \cite[Corollary IV.7.3.2]{Knus}:
\begin{align*}
\clif(\langle b\rangle +r)&=\clif(br)\\
&= \clif(r)+\disc(r)\{b\}.
\end{align*}
Since $\clif(r)\{b\}$ is equal to $\clif(q)\disc(q)$,
that is also an invariant of $q$. Thus we have found all the invariants
of $q$.
\end{proof}

Since $O(2n+1)$ is not a smooth group scheme, its invariants
in $H^{m,m}$ are not immediate from Theorem \ref{smoothmilnor},
but they are easy to compute:

\begin{proposition}
Let $k$ be a field of characteristic 2,
$n$ a positive integer,
$q_0$ a quadratic form
of dimension $2n+1$ over $k$.
Then
$$\Inv_k^{m,m}(O(q_0))\cong H^{m,m}(k)\oplus H^{m-1,m-1}(k)$$
for every integer $m$.
Explicitly, we can view the invariants for $O(q_0)$
as the invariants of quadratic forms $q$ of dimension $2n+1$
over fields $F/k$. Every invariant in $H^{m,m}$ has the form
$$u(q)=c+\disc(q)e$$
for some (uniquely determined) $c\in H^{m,m}(k)$ and
$e\in H^{m-1,m-1}(k)$.
\end{proposition}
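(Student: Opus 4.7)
The plan is to reduce directly to tools already established earlier in this section and in the paper: the splitting $O(q_0) \cong \mu_2 \times SO(q_0)$ recorded at the start of this section, combined with Proposition \ref{mupprodnn} and Theorem \ref{smoothmilnor}. No new cohomological calculation should be needed.

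Applying Proposition \ref{mupprodnn} with $p = 2$ and $H = SO(q_0)$ gives
$$\Inv^{m,m}_k(O(q_0)) \cong \Inv_k^{m,m}(SO(q_0)) \oplus \Inv_k^{m-1,m-1}(SO(q_0)),$$
and since $SO(q_0)$ is smooth over $k$, Theorem \ref{smoothmilnor} identifies each $\Inv_k^{n,n}(SO(q_0))$ with the constant invariants $H^{n,n}(k)$. Combining these yields the abstract isomorphism $\Inv^{m,m}_k(O(q_0)) \cong H^{m,m}(k) \oplus H^{m-1,m-1}(k)$. To exhibit every invariant in the stated form, I would unpack the isomorphism in Proposition \ref{mupprodnn}: it sends constants $c \in H^{m,m}(k)$ and $e \in H^{m-1,m-1}(k)$, regarded as invariants of $SO(q_0)$, to the invariant $(\alpha,\beta) \mapsto c + e\alpha$ of $\mu_2 \times SO(q_0)$, where $\alpha \in H^1(F,\mu_2) \cong H^{1,1}(F)$ is the $\mu_2$-component. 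Under the bijection between $O(q_0)$-torsors and quadratic forms $q$ of dimension $2n+1$, this $\mu_2$-component is the restriction of $q$ to the $1$-dimensional radical $V^\perp$, i.e.\ the discriminant $\disc(q) \in H^{1,1}(F)$ used in Theorem \ref{oddorthog}. Uniqueness of $c$ and $e$ is inherited from the uniqueness in Proposition \ref{mupprodnn}.

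The only step that requires any genuine check is the last identification, that under $O(q_0) \cong \mu_2 \times SO(q_0)$ the projection to the $\mu_2$-factor sends an $O(q_0)$-torsor to its discriminant. This should be immediate from the construction of the splitting, since the $\mu_2$-factor acts by scalars on the $1$-dimensional radical $V^\perp$ and trivially on its complement; hence the induced map $H^1(F,O(q_0)) \to H^1(F,\mu_2)$ records exactly the isomorphism class of $V^\perp$ as a $\mu_2$-torsor, which is the half-discriminant. Beyond that small bookkeeping, I do not expect any real obstacle: the proposition is essentially a corollary of Proposition \ref{mupprodnn} and Theorem \ref{smoothmilnor}.
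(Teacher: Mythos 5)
Your argument is correct, and it is a genuinely cleaner route than the one the paper takes. The paper does not use the global product decomposition $O(q_0)\cong\mu_2\times SO(q_0)$ here; instead it writes $q_0=\langle b_0\rangle+q_1$ with $q_1$ of dimension $2n$, uses the surjectivity of $H^1(F,O(q_1)\times\mu_2)\to H^1(F,O(q_0))$ (every odd-dimensional form decomposes as $\langle b\rangle+r$) to get an \emph{injection} $\Inv^{m,m}_k(O(q_0))\hookrightarrow\Inv^{m,m}_k(O(q_1)\times\mu_2)$, computes the target by Proposition \ref{smoothprod} and the known invariants of $\mu_2$ in $H^{m,m}$, and then must still observe that the resulting candidate expression $c+\{b\}e$ is realized by honest invariants of $q$ (constants and $\disc(q)e$) to see that the injection hits everything. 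Your version, applying Proposition \ref{mupprodnn} with $H=SO(q_0)$ and Theorem \ref{smoothmilnor} to the product $O(q_0)=\mu_2\times SO(q_0)$ itself, computes the group of invariants on the nose and so skips both the surjectivity-of-$H^1$ step and the realization step; the only thing left to check is the identification of the $\mu_2$-coordinate of a torsor with the discriminant, as you say. Two small points of bookkeeping there: the central $\mu_2$ acts by scalars on all of $V$, not just on $V^\perp$, so the correct description of the retraction $O(q_0)\to\mu_2$ is the character given by the action on the radical $V^\perp$ (whose restriction to the central scalars is the identity); and the resulting $\mu_2$-torsor of $\mathrm{Iso}(q_0,q)$ is the class of $q|_{V^\perp}$ \emph{relative to} $q_0|_{V_0^\perp}$, i.e.\ $\disc(q)-\disc(q_0)$, which differs from $\disc(q)$ by a constant and hence only reshuffles $c$. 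Neither affects the conclusion.
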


\begin{proof}
By the same argument as in the proof of Theorem \ref{oddorthog},
the restriction
$$\Inv^{m,m}_k(O(q_0))\arrow \Inv^{m,m}_k(O(q_1)\times \mu_2)$$
is injective, where we write $q_0=\langle b_0\rangle+q_1$
for a nonsingular quadratic form $q_1$ (not unique) of dimension $2n$.
By Propositions \ref{smoothprod}
and \ref{mup}, we know the invariants for $O(q_1)\times \mu_2$.
So any invariant $u$ in $H^{m,m}$ for quadratic forms $q$ of dimension $2n+1$
can be written as
$$u(\langle b\rangle+r)=c+\{b\}e$$
for some (unique) $c\in H^{m,m}(k)$ and $e\in H^{m-1,m-1}(k)$.
Here $\{b\}=\disc(q)$ is an invariant of $q=\langle b\rangle+r$.
Thus we have found all the invariants in $H^{m,m}$ for quadratic forms
of dimension $2n+1$.
\end{proof}

Now we turn to the smooth connected group $SO(2n+1)$. Since
it is smooth, its invariants in $H^{m,m}$
are all constant (Theorem \ref{smoothmilnor}). Here are
its invariants in $H^{m+1,m}$.

\begin{theorem}
\label{oddplus}
Let $k$ be a field of characteristic 2,
$m$ an integer, $n$ a positive integer, $q_0$
a quadratic form of dimension $2n+1$ over $k$. Then the group
of cohomological invariants for $SO(q_0)$ is given by 
$$\Inv_k^{m+1,m}(SO(q_0))\cong 
H^{m+1,m}(k)\oplus H^{m-1,m-1}(k).$$
Concretely, writing $[d]=\disc(q_0)\in H^{1,1}(k)$,
we can view the invariants for $SO(q_0)$
as the invariants of
quadratic forms $q$ of dimension $2n+1$ and discriminant $[d]$.
Every invariant has the form
$$u(q)=c+\clif(q)f$$
for some (uniquely determined) $c\in H^{m+1,m}(k)$ and 
$f\in H^{m-1,m-1}(k)$.
\end{theorem}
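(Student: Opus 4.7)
The plan is to deduce the theorem from Theorem \ref{oddorthog} via the isomorphism $O(q_0)\cong\mu_2\times SO(q_0)$ stated at the start of this section, together with Proposition \ref{mupprod}. Applied with $H=SO(q_0)$, Proposition \ref{mupprod} gives a direct-sum decomposition
$$\Inv^{m+1,m}_k(O(q_0))\cong \Inv^{m+1,m}_k(SO(q_0))\oplus \Inv^{m,m-1}_k(SO(q_0)),$$
in which the projection to the first summand is restriction of an invariant to $SO(q_0)$-torsors. Under the splitting, $SO(q_0)$-torsors over $F/k$ correspond to quadratic forms $q$ of dimension $2n+1$ with $\disc(q)=[d]$.

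Substituting $\disc(q)=[d]$ into the formula
$$u(q)=c+\disc(q)\,e+\clif(q)\,f+\clif(q)\disc(q)\,g$$
of Theorem \ref{oddorthog} gives
$$u|_{SO(q_0)}(q)=(c+[d]e)+\clif(q)(f+[d]g)=c'+\clif(q) f',$$
with $c'=c+[d]e\in H^{m+1,m}(k)$ and $f'=f+[d]g\in H^{m-1,m-1}(k)$. Taking $e=g=0$ shows $(c',f')$ already sweeps out all of $H^{m+1,m}(k)\oplus H^{m-1,m-1}(k)$, so combined with the surjectivity above we obtain a surjective parametrization $(c',f')\mapsto c'+\clif(q) f'$ onto $\Inv^{m+1,m}_k(SO(q_0))$.

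The main obstacle is injectivity of this parametrization: if $c'+\clif(q) f'=0$ as an invariant of $SO(q_0)$, then $c'=f'=0$. To argue this I would evaluate the vanishing relation on the $SO(q_0)$-torsor $q=\langle b_0\rangle+t\langle\langle a]]+(n-1)H$ over $F=k(a,t)$, for which $\clif(q)=[a,b_0\}+[a,t\}$. Taking the residue at $t=0$ annihilates the terms not involving $t$ and yields $a\cdot f'=0$ in $H^{m,m-1}(k(a))$. Passing to the valuation $v_\infty$ at $a=\infty$ on $k(a)$, the element $af'$ lies in $U_1$ with respect to $v_\infty$ (since $a$ has a simple pole there), and under the isomorphism $U_1/U_0\cong\Omega^{m-1}_k$ from Theorem \ref{filtration} its class corresponds to $f'$ itself via the inclusion $H^{m-1,m-1}(k)=\Omega^{m-1}_{\log,k}\hookrightarrow\Omega^{m-1}_k$. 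Hence $f'=0$, and evaluating the original relation at $q=q_0$ then forces $c'=0$.
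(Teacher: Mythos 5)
Your proof is correct, but it takes a genuinely different route from the paper's. The paper first reduces to trivial discriminant, writes $q_0=\langle 1\rangle+q_1$, and passes \emph{down} to the subgroup $O(q_1)\subset SO(q_0)$: since $H^1(F,O(q_1))\to H^1(F,SO(q_0))$ is surjective, $\Inv_k^{m+1,m}(SO(q_0))$ injects into $\Inv_k^{m+1,m}(O(q_1))$, so Theorem \ref{evenorthog} delivers both the shape $c+\disc(r)e+\clif(r)f$ and the uniqueness of the coefficients in one stroke; the unwanted term $e$ is then killed via the isotropy relation $\langle 1\rangle+\langle\langle a_1]]+(n-1)H\cong\langle 1\rangle+nH$ together with Proposition \ref{zp}. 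You instead pass \emph{up} to $O(q_0)$, using the splitting $O(q_0)\cong\mu_2\times SO(q_0)$ and Proposition \ref{mupprod} to see that restriction $\Inv_k^{m+1,m}(O(q_0))\to\Inv_k^{m+1,m}(SO(q_0))$ is surjective; Theorem \ref{oddorthog} then hands you the shape $c'+\clif(q)f'$ at once (the discriminant terms being absorbed into $c'$ and $f'$), but uniqueness must be supplied separately, and your residue argument for it is sound: the torsor $\langle b_0\rangle+t\langle\langle a]]+(n-1)H$ over $k(a,t)$ has Clifford invariant $[a,b_0\}+[a,t\}$, the residue at $t=0$ isolates $[a]f'$ in $H^{m,m-1}(k(a))$, and the class of $[a]f'$ in $U_1/U_0\cong\Omega^{m-1}_k$ at $a=\infty$ is $f'$ viewed inside $\Omega^{m-1}_{\log,k}\subset\Omega^{m-1}_k$ --- this is exactly the computation the paper carries out inside the proof of Proposition \ref{zp}, so you could equally have quoted the uniqueness statement of that proposition directly. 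The trade-off: the paper's proof of Theorem \ref{oddplus} is logically independent of Theorem \ref{oddorthog}, whereas yours leans on it (harmlessly, since \ref{oddorthog} is established first and is not circular); in exchange you avoid both the normalization to trivial discriminant and the isotropy relation, since that work is already packaged into Theorem \ref{oddorthog}.
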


\begin{proof}
Over any field $F/k$, the torsors for $SO(q_0)$ can be identified
(up to isomorphism) with the
quadratic forms $q$ of dimension $2n+1$ and discriminant $[d]$.
We have $\disc(aq)=\{a\}+\disc(q)$
in $H^{1,1}$,
and so these invariants are in fact independent of $[d]$.

So we can assume that $q_0$ has discriminant 
$1\in (k^*)/(k^*)^2\cong H^{1,1}(k)$. Then $q_0$ can be written
as $\langle 1\rangle+q_1$ for some nonsingular quadratic form $q_1$
over $k$ of dimension $2n$. The inclusion $O(q_1)\subset SO(q_0)$
gives a surjection $H^1(F,O(q_1))\arrow H^1(F,SO(q_0))$, since
every form $q$ of dimension $2n+1$ with trivial discriminant over a field $F/k$
can be written as an orthogonal sum $\langle 1\rangle+r$
for some nonsingular quadratic form $r$ of dimension $2n$
(not unique).
So $\Inv^{m+1,m}_k(SO(q_0))$ injects into
$\Inv^{m+1,m}_k(O(q_1))$. By Theorem \ref{evenorthog}, every
invariant $u$ for $SO(q_0)$ can be written as
$$u(\langle 1\rangle+r)=c+\disc(r)e+\clif(r)f$$
for some (unique) $c\in H^{m+1,m}(k)$,
$e\in H^{m,m}(k)$, and $f\in H^{m-1,m-1}(k)$.

We use a special case of the isomorphism from the proof
of Theorem \ref{oddorthog}: for any field $F/k$
and $a_1\in F$, the quadratic form
$\langle 1\rangle +\langle\langle a_1]]
+(n-1)H$ is isomorphic to $\langle 1\rangle+nH$. So the invariant
$u(q)$ must take the same value on these two forms. That is,
$$c+[a_1]e=c,$$
and so $[a_1]e$ is equal to zero
as an invariant of $\Z/p$ (thinking of $a_1\in F$
as an element of $H^1(F,\Z/p)$ for fields $F/k$). By the description
of the invariants for $\Z/p$ (Proposition \ref{zp}),
it follows that $e=0$. Thus the invariant $u$ has the form
$$u(\langle 1\rangle+r)=c+\clif(r)f$$
for some (unique) $c\in H^{m+1,m}(k)$
and $f\in H^{m-1,m-1}(k)$.

Here $\clif(\langle 1\rangle+r)=\clif(r)$, by the description
of the Clifford invariant for odd-dimensional forms
in the proof of Theorem \ref{oddorthog}. So $\clif(r)$ is an invariant
of $q=\langle 1\rangle+r$. Thus we have found all the invariants
for $q$.
\end{proof}

\section{Invariants of the connected group
$SO(2n)$ in characteristic 2}

Let $k$ be a field of characteristic 2, $n$ a positive
integer, $q_0$ a quadratic form of dimension $2n$
over $k$. 
The orthogonal group $O(q_0)$ is smooth over $k$, with two connected
components. We write $SO(q_0)$ for the identity component,
even though the whole group $O(2n)$ is contained in $SL(2n)$
in characteristic 2. Since $SO(q_0)$ is smooth, its invariants
in $H^{m,m}$ are constant (Theorem \ref{smoothmilnor}).
Here are its invariants in $H^{m+1,m}$.

\begin{theorem}
\label{oplus}
Let $k$ be a field of characteristic 2,
$n$ a positive integer, $q_0$ a quadratic
form of dimension $2n$ over $k$. Let $[d]$ be the discriminant
of $q_0$ in $H^{1,0}(k)$. Then the group
of cohomological invariants for $SO(q_0)$ in $H^{m+1,m}$
for an integer $m$ is given by 
$$\begin{cases}
H^{m+1,m}(k)\oplus [d]H^{m-1,m-1}(k) &\text{if }n=1\\
H^{m+1,m}(k)\oplus H^{m-1,m-1}(k) \oplus \{\lambda\in H^{m-2,m-2}(k):
[d]\lambda=0\}& \text{if }n=2\\
H^{m+1,m}(k)\oplus H^{m-1,m-1}(k) & \text{if }n\geq 3.
\end{cases}$$
We can equivalently view the invariants for $SO(q_0)$
as the invariants of quadratic forms $q$ of dimension $2n$
and discriminant $[d]$ over fields
$F/k$. For $n\geq 3$, every invariant has the form
$$u(q)=c+\clif(q)f$$
for some (uniquely determined) $c\in H^{m+1,m}(k)$ and 
$f\in H^{m-1,m-1}(k)$. For $n=2$, a 4-dimensional quadratic form $q$
with discriminant $[d]$
has an invariant $b_{\lambda}(q)$ in $H^{m+1,m}$
for each $\lambda\in H^{m-2,m-2}(k)$ with $[d]\lambda=0$,
as well as the Clifford invariant in $H^{2,1}$.
\end{theorem}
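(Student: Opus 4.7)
The plan is to follow the strategy used for Theorem \ref{oddplus}: identify $SO(q_0)$-torsors over a field $F/k$ with quadratic forms of dimension $2n$ and discriminant $[d]$, and parametrize such forms by scaled $1$-fold Pfister forms in order to reduce to the abelian-subgroup calculation of Theorem \ref{abgp}. For $n\geq 2$, fix $a_0\in k$ with $[a_0]=[d]$ and consider the natural transformation of torsor functors
$$\phi\colon H^1(F,(\Z/2)^{n-1}\times (\mu_2)^n)\longrightarrow H^1(F,SO(q_0)),\quad (a_1,\ldots,a_{n-1};b_1,\ldots,b_n)\mapsto \sum_{i=1}^{n-1}b_i\langle\langle a_i]] + b_n\langle\langle a_0+a_1+\cdots+a_{n-1}]].$$
Because every nonsingular quadratic form of dimension $2n$ is an orthogonal sum of scaled $1$-fold Pfister forms and the discriminant of the sum is $\sum_i[a_i]$, the transformation $\phi$ is surjective on torsors. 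Restriction along $\phi$ therefore embeds $\Inv_k^{m+1,m}(SO(q_0))$ into the explicit group computed in Theorem \ref{abgp}.

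With an arbitrary invariant $u$ expanded as a polynomial in the $\{b_i\}$ with coefficients drawn from $H^{*,*}(k)$ and multiplied by monomials in the $[a_j]$, I would impose the compatibility relations coming from the non-uniqueness of the Pfister decomposition of a given form. The two elementary relations are: (i) if $a_j=0$ for some $j\leq n-1$ (respectively if $a_0+a_1+\cdots+a_{n-1}=0$) then the $j$th (respectively $n$th) Pfister summand is hyperbolic, so $u$ is independent of $b_j$ (respectively $b_n$); and (ii) any transposition of the $n$ Pfister summands leaves the form unchanged---a transposition of two indices in $\{1,\ldots,n-1\}$ acts linearly on the parameters, but any transposition involving the $n$th slot acts non-linearly via $a_j\leftrightarrow a_0+a_1+\cdots+a_{n-1}$. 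Combining these constraints with $\clif(q)=\sum_i[a_i,b_i\}$ (and $[a_n]=[d]+\sum_{j<n}[a_j]$) reassembles the ``diagonal'' coefficients of $u$ into the constant part $c$ plus the Clifford part $\clif(q)f$.

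The real work is bounding the remaining ``off-diagonal'' coefficients. After (i) these are concentrated in at most two families: a single-symbol term $[a_i]\mu$ with $\mu\in H^{m,m}(k)$, and a three-fold symbol term $[a_i,b_j,b_k\}\lambda$ with $\lambda\in H^{m-2,m-2}(k)$. For $n\geq 3$ one has both a linear transposition $(1\,2)$ and a non-linear one $(1\,3)$ available; the key computation is that $(1\,3)$ sends $[a_1]\mu\mapsto([d]+[a_1]+[a_2])\mu$, so matching against the original reduces to $[a_2]\mu=[d]\mu$ as an invariant of $(\Z/2)^2$, forcing $\mu=0$ by Proposition \ref{zp}, and an analogous comparison forces $\lambda=0$. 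For $n=2$ only the non-linear transposition of the two Pfister summands is available; the $\mu$-family is killed by combining this transposition with the cancellation failure $\langle c\rangle\perp c\langle\langle a]]\cong \langle c\rangle\perp H$ of Theorem \ref{oddorthog} applied inside the $4$-dimensional form, while the $\lambda$-family survives subject to the single compatibility $[d]\lambda=0$, producing the advertised invariants $b_\lambda$.

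For $n=1$ no permutation is available, and the approach is instead to use the surjective natural transformation $\mu_2\to SO(q_0)$, $b\mapsto b\langle\langle a_0]]$, together with Proposition \ref{mup}, and to impose $b\langle\langle a_0]]\cong b'\langle\langle a_0]]$ whenever $b/b'$ lies in the norm group of $K=k[x]/(x^2+x+a_0)$ over $k$. Via the projection formula the coefficient $w\in H^{m,m-1}(k)$ of $\{b\}$ must restrict trivially to $K$, and Izhboldin's exact sequence (Theorem \ref{cyclic}) identifies the kernel of restriction to $K$ with $[d]\cdot H^{m-1,m-1}(k)$, giving the advertised answer. The main obstacle throughout is the combinatorial bookkeeping isolating precisely which parametrization-compatible coefficients survive; in particular, the sharp dichotomy between $n=2$ and $n\geq 3$ depends on having at least three Pfister slots to support both a linear and a non-linear transposition simultaneously.
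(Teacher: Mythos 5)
Your overall strategy---identify $SO(q_0)$-torsors with even-dimensional forms of fixed discriminant, parametrize by scaled binary Pfister forms so as to restrict invariants to $(\Z/2)^{n-1}\times(\mu_2)^n$ and invoke Theorem \ref{abgp}, then cut the coefficients down using hyperbolicity degenerations and permutations of the Pfister slots---is essentially the paper's strategy. (The paper peels off one binary block at a time and uses Theorems \ref{mupprod}, \ref{zpprod} and \ref{evenorthog} rather than passing to the full abelian subgroup in one step, but the relations imposed are the same.) Your $n=1$ argument also coincides with the paper's up to replacing ``the form becomes hyperbolic over the discriminant extension $l$, so the invariant is constant on fields over $l$'' by a projection-formula detour; note that corestriction is not injective, so deducing $\mathrm{res}_K(w)=0$ from the vanishing of $\mathrm{cor}(\{\beta\}\cdot\mathrm{res}(w))$ needs justification, whereas the direct hyperbolicity argument avoids the issue entirely.

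The genuine gap is in the case $n=2$: your relations only bound the group of invariants from above. Hyperbolic degenerations, permutations of the Pfister slots, and the coincidence $b_1=b_2$ are necessary conditions on an invariant, but nothing in your argument shows that these moves generate the full isomorphism relation on presentations $q=b_1\langle\langle a_1]]+b_2\langle\langle a_2]]$. Consequently the assertion that the surviving expression $b_\lambda(q)=[a_1,b_1,b_2\}\lambda$ (with $[d]\lambda=0$) ``produces the advertised invariants'' is unproved: a priori, two presentations of the same $4$-dimensional form related by a move outside your list could assign different values to $b_\lambda$. The paper resolves exactly this point with Revoy's chain lemma (Theorem \ref{chain}), which gives a complete generating set of moves A--D connecting any two presentations, and then verifies by hand that $[u_1v_1,u_1,u_2\}\lambda$ is unchanged by each move (using $[u,u\}=0$ and $[d]\lambda=0$). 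Some such well-definedness argument is indispensable; without it your computation shows only that $\Inv_k^{m+1,m}(SO(q_0))$ for $n=2$ is \emph{contained in} the stated group, not that it equals it. A smaller inaccuracy: the extra relation needed to kill the coefficient of $[a_1]$ when $n=2$ is the even-dimensional isomorphism $\langle\langle a_1]]+\langle\langle a_1+d]]\cong\langle\langle d]]+H$, not the odd-dimensional cancellation failure $\langle c\rangle\perp c\langle\langle a]]\cong\langle c\rangle\perp H$ from Theorem \ref{oddorthog}, which cannot be applied ``inside'' a nonsingular $4$-dimensional form since $\langle c\rangle$ is a radical summand and does not occur there.
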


The invariants for 4-dimensional quadratic forms with
given discriminant are analogous to those
found by Serre in all even dimensions at least 4 when the
characteristic is not 2 \cite[Proposition 20.1]{GMS}.
Likewise, the invariants for 2-dimensional quadratic forms with
given discriminant are analogous to those found by Serre
in dimension 2 when the characteristic is not 2
\cite[Exercise 20.9]{GMS}.

Every 1-dimensional torus over $k$
is of the form $SO(q_0)$ for some 2-dimensional quadratic form $q_0$,
and so Theorem \ref{oplus} describes all mod $p$ cohomological invariants
for every 1-dimensional torus. Blinstein and Merkurjev described
the cohomological invariants in degrees at most 3 for tori
of any dimension \cite[Theorem 4.3]{BM}.

\begin{proof}
The map $H^1(F,SO(q_0))\arrow H^1(F,O(q_0))$ is injective, with
image the set of isomorphism classes of $2n$-dimensional
quadratic forms over $F$ with discriminant $[d]$
\cite[equation 29.29]{KMRT}. So we can think of the invariants
for $SO(q_0)$ as the invariants for quadratic forms
(on fields over $k$) of dimension $2n$ with discriminant $[d]$.

Every such form $q$ over a field $F/k$ can be written as $q=r+b_1\langle\langle
\disc(r)+d]]$ for some quadratic form $r$ of dimension $2n-2$
and some $b_1\in F^*$. (Equivalently, for any subform $r_0$ of dimension
$2n-2$ in $q_0$, the subgroup $O(r_0)\times \mu_2\subset SO(q_0)$
induces a surjection
on $H^1$.) Assume that $n\geq 2$.
We know the invariants for $O(r_0)\times \mu_2$
by Theorems \ref{mupprod} and \ref{evenorthog}. So every invariant
$u$ in $H^{m+1,m}$ for $SO(q_0)$ can be written,
on a quadratic form $q=r+b_1\langle\langle\disc(r)+d]]$, as
$$u(q)=c+\disc(r)e+\clif(r)f
+\{ b_1\}g+\disc(r)\{b_1\}h+\clif(r)\{b_1\}\lambda$$
for some (unique) $c\in H^{m+1,m}(k)$, $e\in H^{m,m}(k)$,
$f\in H^{m-1,m-1}(k)$, $g\in H^{m,m-1}(k)$,
$h\in H^{m-1,m-1}(k)$, and $\lambda\in H^{m-2,m-2}(k)$.

We can apply this formula to
$r=s+b_2\langle\langle a_2]]$, for any quadratic form $s$ of dimension
$2n-4$ over a field $F/k$ and any $b_2\in F^*$.
This amounts to restricting the invariant $u$
to a subgroup $O(2n-4)\times (\Z/2\times \mu_2)\times \mu_2$.
We compute that for a quadratic
form $q=s+b_2\langle\langle a_2]]+b_1\ll a_2+\disc(s)+d]]$,
\begin{align*}
u(q)= & \; c+\disc(s)e+[a_2]e
+\clif(s)f+[a_2,b_2\}f+g\{b_1\}\\
& +\disc(s)\{b_1\}h+[a_2,b_1\}h+\clif(s)\{b_1\}\lambda+[a_2,b_1,b_2\}\lambda.
\end{align*}
This must be unchanged when we switch $b_1$ and $b_2$ and simultaneously
change $a_2$ to $a_2+\disc(s)+d$. It follows that
\begin{align*}
0= & \; [d]e+\disc(s)e+\{b_1\}([d]f+g)+\disc(s)\{b_1\}(f+h)\\
&+[a_2,b_1\}(f+h)+\clif(s)\{b_1\}\lambda
+\{b_2\}(g+[d]h)+\clif(s)\{b_2\}\lambda\\
&+[a_2,b_2\}(f+h)+\{b_1,b_2\}[d]\lambda
+\disc(s)\{b_1,b_2\}\lambda.
\end{align*}
Assume that $n\geq 3$, so that the invariants of $O(2n-4)$ are given
by Theorem \ref{evenorthog}. Then our knowledge of the invariants
of $O(2n-4)\times (\Z/2\times \mu_2)\times \mu_2$ from Theorems
\ref{mupprod} and \ref{zpprod}, in particular the uniqueness of the
coefficients, implies from the formula above that
$e=0$, $g=[d]f$, $h=f$, and $\lambda=0$.

So, on a quadratic form
$q=r+b_1\ll\disc(r)+d]]$, the invariant $u$ is given by:
\begin{align*}
u(q)&=c+(\clif(r)+\disc(r)\{b_1\}+[d,b_1\})f\\
&=c+\clif(q)f,
\end{align*}
for some (unique) $c\in H^{m+1,m}(k)$ and $f\in H^{m-1,m-1}(k)$.
Since the Clifford invariant is known to be an invariant of $q$,
we have determined all the invariants of $SO(q_0)$ for $n\geq 3$.

We next consider the case $n=2$. In that case, the symmetry above
(switching the two summands of a quadratic form $q=b_1\ll a_1]]+
b_2\ll a_2]]$ with $a_1+a_2=d$ over a field $F/k$)
gives only that
$[d]e=0$, $g=[d]f$, $h=f$, and $[d]\lambda=0$.
So the invariant has the form
$$u(q)=
c+[a_1]e+\clif(q)f+[a_1,b_1,b_2\}\lambda$$
for some (unique) $c\in H^{m+1,m}(k)$,
$e\in H^{m,m}(k)$, $f\in H^{m-1,m-1}(k)$,
and $\lambda\in H^{m-2,m-2}(k)$ with $[d]e=0$ and $[d]\lambda=0$.

If $b_2=b_1$, then $q=b_1(\langle\langle a_1]]+\langle\langle a_1+d]])$.
A direct calculation shows that
$$\langle\langle a_1]]+\langle\langle a_1+d]]\cong \langle\langle d]]+H$$
\cite[Example 7.23]{EKM}.
So, when $b_2=b_1$, $q$ is independent
of $a_1$, up to isomorphism. Also, when $b_2=b_1$, we have
$\{b_1,b_2\}=0$ and
$\clif(q)=[d,b_1\}$,
so $u(q)=c+[a_1]e+[d,b_1\}f$. This must be independent of $a_1$.
By the uniqueness in Theorem \ref{abgp}, 
it follows that $e=0$.
Thus the invariant $u$ has the form,
for any $a_1,a_2$ in a field $F$ over $k$
with $a_1+a_2=d$ and $b_1,b_2\in F^*$:
$$u(q)=
c+\clif(q)f+[a_1,b_1,b_2\}\lambda$$
for some (unique) $c\in H^{m+1,m}(k)$,
$f\in H^{m-1,m-1}(k)$,
and $\lambda\in H^{m-2,m-2}(k)$ with $[d]\lambda=0$.

The calculation will be finished by showing that
for any $\lambda\in H^{m-2,m-2}(k)$ with $[d]\lambda=0$,
$b_{\lambda}(q):=[a_1,b_1,b_2\}\lambda$ is an invariant for $SO(q_0)$
in $H^{m+1,m}$.
To show that $b_{\lambda}(q)$
is an invariant, we use Revoy's chain lemma
for quadratic forms in characteristic 2
\cite[Proposition 3]{Revoy}.
Write $[a,b]$ for the 2-dimensional quadratic form $ax^2+xy+by^2$.

\begin{theorem}
\label{chain}
(Revoy)
Let $k$ be a field of characteristic 2. Then the quadratic form
$\sum_{i=1}^n [a_i,b_i]$ over $k$ is isomorphic to the form
$\sum_{i=1}^n [a_i',b_i']$ if and only if these two elements of $k^{2n}$
can be connected by a sequence of the following moves:
$$A\colon [a_i,b_i]+[a_{i+1},b_{i+1}]\arrow [a_i+b_{i+1},b_i]+
[a_{i+1}+b_i,b_{i+1}]$$
for some $1\leq i\leq n-1$, or
\begin{align*}
B\colon [a_i,b_i]&\arrow [\beta^2a_i,\beta^{-2}b_i]\\
C\colon [a_i,b_i]&\arrow [a_i+\beta^2b_i+\beta,b_i]\\
D\colon [a_i,b_i]&\arrow [a_i,b_i+\beta^2a_i+\beta]
\end{align*}
for some $1\leq i\leq n$ and $\beta\in k^*$.
\end{theorem}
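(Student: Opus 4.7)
The strategy is to treat the two directions of the biconditional separately. The forward direction --- that each of the four moves preserves the isometry class --- is a routine verification by explicit change of basis, so the essential content lies in the converse, that any isomorphism $\sum_{i=1}^n [a_i,b_i]\cong \sum_{i=1}^n[a_i',b_i']$ factors as a composition of moves A, B, C, D.

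For the forward direction, I would work with a basis $(e_i, f_i)$ for the $i$-th summand plane where $q(e_i)=a_i$, $q(f_i)=b_i$, and the associated alternating bilinear form satisfies $B(e_i,f_i)=1$ with all other pairings zero. Move B is realized by the scaling $(e_i,f_i)\mapsto(\beta e_i,\beta^{-1}f_i)$; move C by the transvection $e_i\mapsto e_i+\beta f_i$ (keeping $f_i$ fixed), yielding $q(e_i+\beta f_i)=a_i+\beta+\beta^2 b_i$ once one observes that the cross term $2\beta B(e_i,f_i)$ vanishes in characteristic $2$; move D symmetrically. For move A, the map $(e_i,e_{i+1})\mapsto(e_i+f_{i+1},e_{i+1}+f_i)$ with $f_i,f_{i+1}$ unchanged is an isometry, the orthogonality of the two new planes following from $B(e_i+f_{i+1},e_{i+1}+f_i)=B(e_i,f_i)+B(f_{i+1},e_{i+1})=1+1=0$ in characteristic $2$.

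For the backward direction, I would proceed by induction on $n$. The base case $n=1$ demands that every isometry between two 2-dimensional nondegenerate forms $[a,b]\cong[a',b']$ factor through moves B, C, D. Since the bilinear form attached to $[a,b]$ is symplectic in characteristic $2$, the 2-dimensional orthogonal group is generated by scalings together with the two families of Eichler transvections $e\mapsto e+\beta f$ and $f\mapsto f+\beta e$, which are precisely moves B, C, and D. For the inductive step, given $q=\sum_i[a_i,b_i]\cong\sum_i[a_i',b_i']=q'$, the plan is to use A-moves (which act on adjacent summands) together with single-plane moves B, C, D to transport the decomposition of $q$ into one whose first plane matches that of $q'$ on the nose; the induction hypothesis applied to the orthogonal complement then finishes the argument.

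The main obstacle is the inductive step: making precise that A-moves, composed with B, C, D, suffice to slide an arbitrary chosen hyperbolic pair $(u,v)$ of $q$ into the first-summand position. One cannot simply invoke Witt's theorem, because the statement requires not merely the existence of an isomorphism but an explicit factorization through the permitted moves, and Witt-style cancellation in characteristic $2$ is delicate for quadratic (as opposed to bilinear) forms. I would instead attempt a bubble-sort-style reduction: expressing $u,v$ in the original basis, successive A-moves on adjacent planes would be used to propagate their nonzero coordinates into the first plane, with B, C, D applied at each stage to normalize the resulting quadratic values. A secondary, lower-order difficulty is verifying completeness in the base case $n=1$, which amounts to checking that the subgroup of $k$ generated by $\{\beta^2 b+\beta:\beta\in k^*\}$ together with the action of $\beta\mapsto\beta^2$ on $(a,b)$ exhausts the stabilizer ambiguity of a given $[a,b]$.
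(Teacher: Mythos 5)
The paper does not prove this statement; it is quoted from Revoy \cite[Proposition 3]{Revoy}, so the only question is whether your argument stands on its own. Your forward direction is fine, with one small correction: in move C the term $+\beta$ in $a_i+\beta^2b_i+\beta$ \emph{is} the cross term $\beta B(e_i,f_i)$ from the identity $q(u+v)=q(u)+q(v)+B(u,v)$; nothing vanishes there. The base case $n=1$ is also salvageable, though not quite for the reason you give: the transvections $e\mapsto e+\beta f$ do not preserve $q$ and so are not elements of the orthogonal group of $[a,b]$; the relevant group is $Sp_2(k)=SL_2(k)$, which acts simply transitively on the bases $(e,f)$ with $B(e,f)=1$, and since $SL_2(k)$ is generated by elementary unipotent matrices, any two presentations of the same quadratic plane are linked by C- and D-moves.

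The genuine gap is the converse for $n\geq 2$, which is the entire content of the theorem and which your proposal explicitly leaves as ``the main obstacle,'' offering only a bubble-sort plan. That plan is not obviously executable: the A-move is extremely rigid --- it acts only on adjacent planes, never alters any $f_i$, and adds $f_{i+1}$ to $e_i$ with coefficient exactly $1$ --- so it is far from clear that ``propagating the nonzero coordinates of $u,v$ into the first plane'' can be effected by the listed moves, and no Witt-type extension theorem will hand you the required factorization (it produces an isometry, not a word in A, B, C, D). If you want to close the gap, the natural route is group-theoretic rather than vector-by-vector: a presentation $\sum_i[a_i,b_i]$ of $(V,q)$ is exactly a symplectic basis for the nondegenerate alternating polarization $B$; two presentations of isomorphic forms differ by an element of $Sp_{2n}(k)$; and each of A, B, C, D is the change of tuple induced by a specific element of $Sp_{2n}(k)$. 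The theorem therefore reduces to showing that those specific elements generate $Sp_{2n}(k)$. This is true --- conjugating the A-element by B-elements yields the full one-parameter subgroup $e_i\mapsto e_i+\lambda f_{i+1}$, $e_{i+1}\mapsto e_{i+1}+\lambda f_i$, and its commutators with the long-root subgroups coming from C and D produce the remaining root subgroups with structure constants $\pm1$, so no characteristic-$2$ degeneration intervenes --- but it is a genuine argument, and your proposal does not contain it or any substitute for it.
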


To relate this to the notation we have been using for quadratic
forms: an easy calculation gives that the 2-dimensional
form $[u,v]$ is isomorphic to $u\langle\langle uv]]$ if $u\neq 0$,
and to the hyperbolic plane $H=1\langle\langle 0]]$ if $u=0$.
So a 4-dimensional form $[u_1,v_1]+[u_2,v_2]$ is isomorphic
to $u_1\langle\langle u_1v_1]]+u_2\langle\langle u_2v_2]]$
if $u_1$ and $u_2$ are nonzero, with the coefficient $u_1$ changed
to 1 if $u_1=0$, and likewise for the coefficient $u_2$.
So we want to show that for any
$\lambda\in H^{m-2,m-2}(k)$ with $[d]\lambda=0$,
$$b_{\lambda}(q):=[u_1v_1,u_1,u_2\}\lambda$$
is an invariant of 4-dimensional quadratic forms
$q=[u_1,v_1]+[u_2,v_2]$ with discriminant $[d]$. (That is,
we are assuming that $u_1v_1+u_2v_2=d\in H^{1,0}(k)=k/\mathcal{P}(k)$.)
The formula for $b_{\lambda}(q)$ is understood to mean zero
if $u_1=0$ or $u_2=0$.

To show this, by Theorem \ref{chain}, it suffices to show
that $b_{\lambda}(q)$ is unchanged by moves A, B, C, or D.
One helpful observation (*) is that $[u,u\}=0$ in the Brauer group
$H^{2,1}(k)$
for all $u\in k$, where the expression is defined to mean zero
if $u=0$. This follows from the description of $H^{2,1}(k)$
in terms of differential forms (section \ref{background}),
using that $u(du/u)=du$ is exact. So we can rewrite
$b_{\lambda}(q)=[u_1v_1,u_1,u_2\}\lambda$
as $[u_1v_1,v_1,u_2\}\lambda$.
Also, we have $[u_1v_1]\lambda=[u_2v_2]\lambda$ because $[d]\lambda=0$,
and so we can also rewrite $b_{\lambda}(q)$ as $[u_2v_2,v_1,u_2\}\lambda$,
and hence as $[u_2v_2,v_1,v_2\}\lambda$, for example.

We now check that $b_{\lambda}(q)$ is unchanged by move A. After move A,
using the last formula for $b_{\lambda}(q)$ in the previous paragraph,
$b_{\lambda}(q)$ becomes
$$[(u_2+v_1)v_2,v_1,v_2\}\lambda
=[u_2v_2,v_1,v_2\}\lambda+[v_1v_2,v_1,v_2\}\lambda.$$
By relation (*), the second term is equal to $[v_1v_2,v_2,v_2\}\lambda$,
which is zero since $\{v_2,v_2\}=0$.
So the new $b_{\lambda}(q)$ is equal to the first term,
which is the old $b_{\lambda}(q)$, as we want.

Applying move B with $i=1$, the new $b_{\lambda}(q)$
is $[u_1v_1,\beta^2u_1,u_2\}\lambda=[u_1v_1,u_1,u_2\}\lambda$,
which is the old $b_{\lambda}(q)$, as we want. The same argument works
if $i=2$.

Applying move C with $i=1$, and using the last formula for $b_{\lambda}(q)$
above, the new $b_{\lambda}(q)$ is $[u_2v_2,v_1,v_2\}\lambda$, which
is the old $b_{\lambda}(q)$. Applying move C with $i=2$,
the new $b_{\lambda}(q)$
is $[u_1v_1,v_1,v_2\}\lambda$, which is equal to the old $b_{\lambda}(q)$.

Applying move D with $i=1$, the new $b_{\lambda}(q)$ is
$[u_2v_2,u_1,u_2\}\lambda$, which is the old $b_{\lambda}(q)$.
Applying move D with $i=2$,
the new $b_{\lambda}(q)$ is $[u_1v_1,u_1,u_2\}\lambda$,
which is the old $b_{\lambda}(q)$. This completes the proof that
$b_{\lambda}(q)$ is an invariant of 4-dimensional quadratic forms
with discriminant $[d]\in H^{1,0}(k)$. Thus we have found all
the invariants for $SO(q_0)$ for $q_0$ of dimension 4.

Finally, we turn to the case $n=1$. That is, given an element
$[d]\in H^{1,0}(k)$, we want to find the invariants $u$ in $H^{m+1,m}$ for
2-dimensional quadratic forms with discriminant $[d]$
over fields $F/k$.
Every such form can be written as $q=b_1\langle\langle d]]$
for some $b_1\in F^*$. The form is determined up to isomorphism
by $\{b_1\}\in H^{1,1}(F)$. So any invariant $u$ determines
an invariant for $\mu_2$ over $k$ with values in $H^{m+1,m}$.
By Proposition \ref{mup}, the invariant has the form
$$u(b_1\langle\langle d]])=c+\{b_1\}e$$
for some (unique) $c\in H^{m+1,m}(k)$ and $e\in H^{m,m-1}(k)$.
It remains to determine for which $e\in H^{m,m-1}(k)$
is $\{b_1\}e$ an invariant of $q$.

One invariant we know is the Clifford invariant of $q$,
$\clif(q)=[d,b_1\}$. It follows that any $e\in [d]H^{m-1,m-1}(k)$
gives an invariant of $q$. We show the converse.
Let $l$ be the separable
quadratic extension of $k$ with discriminant $d$. Then,
for any field $F$ over $l$ and any $b_1\in F^*$,
the form $q=b_1\langle\langle d]]$ is hyperbolic, and so $u(q)$
must be independent of $b_1$ on fields over $l$. By the uniqueness
in Proposition \ref{mup}, it follows that $e$ maps to zero
in $H^{m,m-1}(l)$. By Theorem \ref{cyclic},
$$\ker(H^{m,m-1}(k)\arrow H^{m,m-1}(l))=[d]H^{m-1,m-1}(k).$$
So $e$ is in $[d]H^{m-1,m-1}(k)$.
This completes the determination of the invariants
of $SO(q_0)$ for $q_0$ of dimension 2. Theorem \ref{oplus}
is proved.
\end{proof}

\begin{remark}
The invariant $b_{\lambda}(q)$ is easier to construct
for 4-dimensional forms $q$ with trivial discriminant,
as in the case of characteristic not 2 \cite[Example 20.3]{GMS}.
Namely, Theorem \ref{oplus} says that
$b_1(q):=[a_1,b_1,b_2\}\in H^{3,2}(F)$ is an invariant for
quadratic forms $q=b_1\langle\langle a_1]]+b_2\langle\langle
a_2]]$ over $F$ with trivial discriminant (that is, $a_1=a_2$
in $H^{1,0}(F)$).

To prove this directly, note that $q$ 
is a scalar multiple of a quadratic Pfister form,
namely $q=b_1\langle\langle b_1b_2,a_1]]$. (Following the notation
of \cite[section 9.B]{EKM}, a bilinear Pfister form
$\langle\langle a_1,\ldots,a_n\rangle\rangle$ means
$\langle\langle a_1\rangle\rangle\otimes \cdots
\otimes \langle\langle a_n\rangle\rangle$, where $\langle\langle
a\rangle\rangle$ is the 2-dimensional bilinear form
$\langle 1,-a\rangle_{\bi}$.
A quadratic Pfister form
$\langle\langle a_1,\ldots,a_n]]$ means
$\langle\langle a_1,\ldots,a_{n-1}\rangle\rangle\otimes
\langle\langle a_n]]$, where $\langle\langle a]]$ is the 2-dimensional
quadratic form $[1,a]=x^2+xy+ay^2$.)

It follows that $q$ is a difference
of two quadratic Pfister forms, $q=\langle\langle b_1,b_1b_2,a_1]]
-\langle\langle b_1b_2,a_1]]=\varphi_3-\varphi_2$,
in the quadratic Witt group
$I_q(F)$. So the class of $q$ in $I^2_q/I^3_q\cong H^{2,1}(F)$
(also known as the Clifford invariant $\clif(q)$)
is equal to the class of $\varphi_2$, and that class determines
the Pfister form $\varphi_2$ up to isomorphism,
by the Arason-Pfister Hauptsatz \cite[Theorem 23.7]{EKM}. So $q$
also determines $\varphi_3$ up to isomorphism, as $\varphi_3=q+\varphi_2$
in $I_q(F)$. The class of $\varphi_3$ in $H^{3,2}(F)$ is
$[a_1,b_1,b_1b_2\}=[a_1,b_1,b_2\}$, and so we have shown
that the latter expression is an invariant of $q$.
\end{remark}

\section{Cohomological invariants in degree 1}

In this section, we compute the mod $p$ cohomological invariants in degree 1
(that is, in $H^{1,0}$ or $H^{1,1}$)
for any affine group scheme in characteristic $p$. The analogous mod $l$
result is easier, using $A^1$-homotopy invariance of mod $l$ \etale cohomology:
for an affine group scheme $G$ over a field $k$
and a prime number $l$ invertible in $k$, the group of degree-1 invariants
for $G$ over $k$ with coefficients in $\Z/l$ is
$$H^1(k,\Z/l)\oplus \Hom_k(G,\Z/l).$$
A reference for this mod $l$ isomorphism
is Guillot \cite[Corollary 5.1.5]{Guillot}. (Guillot
assumes $k$ algebraically closed, but his proof gives this statement
for any field $k$.)

\begin{theorem}
\label{degree1}
Let $G$ be an affine group scheme of finite type over a field $k$
of characteristic $p>0$. Then
$$\Inv_k^{1,0}(G)\cong H^{1,0}(k)\oplus \Hom_k(G,\Z/p).$$
\end{theorem}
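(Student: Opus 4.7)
The plan is to use the decomposition $\Inv_k^{1,0}(G)=H^{1,0}(k)\oplus \NormInv_k^{1,0}(G)$ and to construct a natural isomorphism $\NormInv_k^{1,0}(G)\cong \Hom_k(G,\Z/p)$. There is an obvious map $\Phi\colon \Hom_k(G,\Z/p)\arrow \NormInv_k^{1,0}(G)$ sending a group scheme homomorphism $\varphi$ to the push-forward of torsors $E\mapsto \varphi_*(E)$. I plan to construct an inverse by extracting a homomorphism from any normalized invariant, using a versal torsor and the Artin-Schreier description of $H^{1,0}$.

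Choose a faithful representation $V$ of $G$ together with a nonempty open $U\subset V$ on which $G$ acts freely, and let $\xi\colon U\arrow U/G$ be the associated versal $G$-torsor. By Theorem \ref{bm}, any normalized invariant $u$ is determined by $u(\xi)\in H^{1,0}(k(U/G))$. The Artin-Schreier sequence identifies $H^{1,0}(F)\cong F/\mathcal{P}(F)$ for any field $F$ of characteristic $p$, so write $u(\xi)=[f]$ for some $f\in k(U/G)$. The pullback of $\xi$ along $U\arrow U/G$ is the trivial $G$-torsor, since $U\times_{U/G}U\cong G\times U$; normalization therefore forces $u(\xi)$ to restrict to zero in $H^{1,0}(k(U))$, so $f=g^p-g$ in $k(U)$ for some $g\in k(U)$.

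From $g$ I extract the desired homomorphism. Let $m\colon G\times U\arrow U$ be the action and $p_2\colon G\times U\arrow U$ the second projection, and set
$$\phi := m^*g-p_2^*g \quad\in\quad O(G)\otimes_k k(U).$$
The $G$-invariance of $f$ gives $m^*f=p_2^*f$, and combined with $(a-b)^p=a^p-b^p$ in characteristic $p$, I compute
$$\phi^p = m^*(g^p)-p_2^*(g^p) = m^*(f+g)-p_2^*(f+g) = m^*g-p_2^*g = \phi.$$
Since $U$ is geometrically integral and $\Z/p$ is \'etale, the solutions of $\phi^p=\phi$ in $O(G)\otimes_k k(U)$ correspond to morphisms $G\times U\arrow \Z/p$ of $k$-schemes, which (as $\Z/p$ is $0$-dimensional and $U$ is geometrically connected) are pulled back from morphisms $G\arrow \Z/p$. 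Hence $\phi=p_1^*\psi$ for a unique morphism $\psi\colon G\arrow \Z/p$. Pulling the identity $m^*g=p_2^*g+p_1^*\psi$ back along the two decompositions of the triple action $G\times G\times U\arrow U$ then forces $\psi(\sigma\tau)=\psi(\sigma)+\psi(\tau)$, so $\psi$ is a group scheme homomorphism.

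To close the loop, I check that $\Phi(\psi)=u$. The push-forward $\Z/p$-torsor $\Phi(\psi)(\xi)$ is the quotient $U/\ker(\psi)\arrow U/G$, and since $g$ is $\ker(\psi)$-invariant (by $\sigma^*g-g=\psi(\sigma)$) and satisfies $g^p-g=f$, it generates $k(U/\ker(\psi))$ as the Artin-Schreier extension of $k(U/G)$ with class $[f]=u(\xi)$. So $\Phi(\psi)=u$ by versality, and injectivity of $\Phi$ follows by reversing the construction. The main obstacle will be the scheme-theoretic descent step in the preceding paragraph: forcing $\phi$ to come from an actual morphism $\psi\colon G\arrow \Z/p$ is delicate when $G$ is non-reduced (for instance $G=\mu_p$), since $O(G)\otimes_k k(U)$ then contains nilpotents and the descent must be carried out directly in the Hopf algebra rather than by a naive function-field computation. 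The case of finite $k$, where Theorem \ref{bm} does not apply as stated, can be handled by passing to an infinite algebraic extension.
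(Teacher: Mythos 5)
Your route is genuinely different from the paper's and, once one step is repaired, it works. The paper reduces to the component group: it shows that the $\Z/p$-torsor $u(\xi)$ trivializes already over $k(U/G^0)$ (the quotient of the trivialized torsor by the connected group $G^0$ still has $p$ connected components), and then runs the Galois theory of the extension $k(U/G^0)/k(U/G)$ with group $G/G^0$, treating the non-split component group by descent to $k_s$. You instead stay inside the Hopf algebra and extract the homomorphism directly as the Artin--Schreier cocycle $\phi=m^*g-p_2^*g$; this avoids the case division between split and non-split component groups and is arguably cleaner. Two of your stated worries are in fact non-issues: for \emph{any} $\F_p$-algebra $R$, the set $\{x\in R: x^p=x\}$ is exactly $(\Z/p)(R)$, nilpotents or not (a nilpotent $\nu$ with $\nu^p=\nu$ satisfies $\nu=\nu^{p^k}=0$), and descending such a morphism from $G_{k(U)}$ to $G$ is painless because $k(U)=k(x_1,\ldots,x_n)$ is purely transcendental over $k$, so no new idempotents appear and $\pi_0(G_{k(U)})=\pi_0(G)$.

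The step that does need an argument is the unexamined assertion that $\phi$ lies in $O(G)\otimes_k k(U)$ at all. The function $g$ is only a rational function on $U$, and when $\dim G>0$ the polar locus of $m^*g$ in $G\times U$ can dominate $U$ under $p_2$ (its image is $\bigcup_\sigma \sigma^{-1}(\mathrm{poles}(g))$), so $m^*g$ is a priori only a rational function on the generic fibre $G_{k(U)}$, not a regular one. This is precisely the case relevant to the paper (for example $O(2n+1)\cong\mu_2\times SO(2n+1)$ in characteristic $2$ is non-smooth of positive dimension). The repair: the identity $\phi^p=\phi$ holds as an identity of rational functions on all of $G\times U$, since $m^*f=p_2^*f$ everywhere; on each irreducible component of $(G\times U)_{\mathrm{red}}$ the restriction of $\phi$ is then an element of a field satisfying $x^p=x$, hence a constant in $\F_p$, and the discrepancy between $\phi$ and the resulting locally constant function lies in the nilpotent ideal of the total quotient ring and satisfies $\nu^p=\nu$, hence vanishes. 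So $\phi$ is in fact regular and locally constant on $G\times U$, and factors through $\pi_0(G\times U)=\pi_0(G)$ because $U$ is geometrically connected. With that paragraph inserted, your cocycle computation, the verification $\Phi(\psi)=u$, and the injectivity argument all go through. (Both your proof and the paper's use the versal-torsor property and hence implicitly assume $k$ infinite; your proposed reduction of the finite-field case would itself need a short restriction-injectivity argument, but the paper is no more explicit on this point.)
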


Here $H^{1,0}(k)$ can also be written as $H^1_{\et}(k,\Z/p)$.

\begin{proof}
We have $\Inv_k^{1,0}(G)\cong H^{1,0}(k)\oplus \NormInv_k(G,\Z/p)$,
as for invariants in any degree. So it suffices to identify the group
of normalized invariants with $\Hom_k(G,\Z/p)$. A homomorphism
$G\arrow \Z/p$ over $k$ clearly gives a normalized invariant
for $G$-torsors with values in $H^{1,0}(F)=H^1_{\et}(F,\Z/p)$, for
fields $F$ over $k$.

Conversely, let $\alpha$ be a normalized invariant for $G$
with values in $H^{1,0}$. Let $V$ be a representation of $G$ over $k$
with a nonempty open subset $U$ such that $G$ acts freely on $U$
with a quotient scheme $U/G$ over $k$. Applying $\alpha$
to the obvious $G$-torsor $\xi$
over the function field $k(U/G)$
determines a $\Z/p$-torsor $Y$ over $k(U/G)$. As discussed
in section \ref{background},
the invariant $\alpha$ is determined by the $\Z/p$-torsor $Y$. Since $\xi$
pulls back to a trivial $G$-torsor over $U$ and $\alpha$ is normalized,
$Y$ pulls back to a trivial $\Z/p$-torsor $Y_3$ over $U$; that is,
$Y\cong \Z/p\times \Spec\, k(U)$.

Let $G^0$ be the identity component of $G$, and let $Y_2$ be the pullback
of $Y$ over $k(U/G^0)$:
$$\xymatrix@C-10pt@R-10pt{
Y_3 \ar[r]\ar[d] & \Spec\, k(U)\ar[r]\ar[d] & U\ar[d]\\
Y_2 \ar[r]\ar[d] & \Spec\, k(U/G^0)\ar[r]\ar[d] & U/G^0\ar[d]\\
Y   \ar[r] & \Spec\, k(U/G)\ar[r] & U/G.
}$$
Since $Y_2$ is isomorphic to $Y_3/G^0$ with $G^0$ connected,
$Y_2$ also has $p$ connected components, and so the $\Z/p$-torsor
$Y_3\arrow \Spec\, k(U/G^0)$ is trivial; that is,
$Y_3\cong \Z/p\times \Spec, k(U/G^0)$.

The group scheme $G/G^0$ is finite and \etale over $k$. So $k(U/G^0)$
is a finite separable extension field of $k(U/G)$. First consider
the case where $G/G^0$ is the $k$-group scheme associated to a finite
group, which we also call $G/G^0$. Then $k(U/G^0)$ is a finite
Galois extension of $k(U/G)$ with Galois group $G/G^0$. By Galois
theory, the diagram gives a homomorphism $\alpha$
from $H:=\Gal(k(U/G)_s/k(U/G))$
to $\Z/p$ and a surjection $\beta$ from $H$ to $G/G^0$, and it shows that
the restriction of $\alpha\colon H\arrow \Z/p$ to $\ker(\beta)$
is trivial. So $\alpha$ can be identified with a homomorphism
$G/G^0\arrow \Z/p$ of finite groups. Equivalently, $\alpha
\in \NormInv_k^{1,0}(G)$ is the invariant associated to a unique
homomorphism $G\arrow \Z/p$ of $k$-group schemes, as we want.

Now consider the general case, where the finite \etale
$k$-group scheme $G/G^0$ need not be ``split'' (meaning the $k$-group
scheme associated to a finite group). Let $K$ be
the subgroup of the Galois group $H$ corresponding
to the extension $k_s(U/G)$ of $k(U/G)$, so that $H/K\cong
\Gal(k_s/k)$. Then $H^{1,0}(k(U/G))=\Hom(H,\Z/p)$ (the group
of continuous homomorphisms), which fits into an exact sequence
$$\xymatrix@C-10pt@R-10pt{
\Hom(H/K,\Z/p) \ar[r]\ar@{=}[d] & \Hom(H,\Z/p)\ar[r]\ar@{=}[d] &
  \Hom(K,\Z/p)^{H/K}\ar@{=}[d]\\
H^{1,0}(k) \ar[r] & H^{1,0}(k(U/G)) \ar[r] & H^{1,0}(k_s(U/G))^{\Gal(k_s/k)}.
}$$
The group scheme $G/G^0$ becomes split over the separable closure
$k_s$, and so the previous paragraph implies that the image of
$\alpha\in H^{1,0}(k(U/G))$ in $H^{1,0}(k_s(U/G))$ is the one
associated to a homomorphism $(G/G^0)_{k_s}\arrow \Z/p$. Since this image
is also invariant under $\Gal(k_s/k)$, it corresponds to a homomorphism
$G\arrow \Z/p$ of $k$-group schemes. Thus, letting $\alpha'$
be $\alpha$ minus
the invariant of $G$ associated to this homomorphism $G\arrow \Z/p$,
the exact sequence above shows that
$\alpha'$ is the image of an element of $H^{1,0}(k)$. Since $\alpha'$
is a normalized invariant, it follows that $\alpha'=0$. Thus we have shown
that $\alpha$ is the invariant associated to a homomorphism
$G\arrow \Z/p$ of $k$-group schemes.
\end{proof}

\begin{theorem}
Let $G$ be an affine group scheme of finite type over a field $k$
of characteristic $p>0$. Then
$$\Inv_k^{1,1}(G)\cong H^{1,1}(k)\oplus \Hom_k(G,\mu_p).$$
\end{theorem}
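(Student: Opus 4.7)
The plan is to follow the proof of Theorem~\ref{degree1} closely, using the Kummer description $H^{1,1}(F) \cong F^*/(F^*)^p$ and $\mu_p$-torsors in place of Artin-Schreier theory and $\Z/p$-torsors. As usual, $\Inv_k^{1,1}(G) \cong H^{1,1}(k) \oplus \NormInv_k^{1,1}(G)$, and each homomorphism $\chi: G \to \mu_p$ of $k$-group schemes gives a normalized invariant via the pushforward $H^1(-, G) \to H^1(-, \mu_p)$; the task is to show every normalized invariant arises this way.

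Let $\alpha$ be normalized, and consider a versal torsor $\xi: U \to U/G$ with $U$ an open subset of a $G$-representation. The class $\alpha(\xi) \in H^{1,1}(k(U/G))$ has trivial pullback to $k(U)$, so we may write $\alpha(\xi) = [f]$ with $f = g^p$ for some $g \in k(U)^*$; this $g$ trivializes the $\mu_p$-torsor $Y = \Spec k(U/G)[y]/(y^p-f)$ after pullback to $\Spec k(U)$. Comparing this trivialization with its translate along the isomorphism $U \times_{U/G} U \cong G \times U$ produces the fppf descent datum $\chi(h, u) := g(h u)/g(u)$, a morphism $G \times \Spec k(U) \to \mu_p$ (landing in $\mu_p$ because $\chi^p = f(hu)/f(u) = 1$ by $G$-invariance of $f$) satisfying the cocycle identity $\chi(h_1 h_2, u) = \chi(h_1, h_2 u)\chi(h_2, u)$. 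The main step is to show $\chi$ is independent of $u$, hence descends to a $k$-morphism $G \to \mu_p$.

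The main obstacle is precisely this descent. The connected-components argument used in the proof of Theorem~\ref{degree1} (relying on étaleness of $\Z/p$ and connectedness of $G^0$) does not translate directly: $\mu_p$ is not étale in characteristic $p$, and a general morphism $G \times U \to \mu_p$ need not factor through $G$ when $\mathcal{O}(G)$ has nilpotents (for example $1 + (h-1)u \in \mathcal{O}(\mu_p \times \mathbb{G}_m)$ has $p$-th power $1$ but depends on $u$). The rigidity here comes from the cocycle identity together with the $p$-th-power constraint: expanding $\chi$ with respect to a $k$-basis of $\mathcal{O}(U)$ and applying Frobenius, which is additive in characteristic $p$, the relation $\chi^p = 1$ combined with the cocycle identity forces all coefficients attached to non-trivial basis elements to vanish, so that $\chi$ lies in $\mu_p(\mathcal{O}(G)) = \Hom_k(G, \mu_p)$. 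Once $\chi$ is extracted, the cocycle identity becomes the group-homomorphism relation, $\chi_*(\xi) = \alpha(\xi)$ holds by construction, and the Galois-descent step over $k_s$ from the proof of Theorem~\ref{degree1} handles any non-split $G/G^0$ verbatim.
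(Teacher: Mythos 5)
There is a genuine gap at exactly the step you flag as the main obstacle: the claim that the cocycle identity together with $\chi^p=1$ forces $\chi$ to be independent of $u$. This is false. Take $G=\alpha_p=\Spec k[\epsilon]/(\epsilon^p)$ (the Frobenius kernel of $G_a$) acting on $U=V=A^1_k$ by translation, so that $U/G=A^1_k$ with coordinate $s=t^p$. Take $f=s\in k(U/G)^*$ and $g=t\in k(U)^*$. Then $f$ is $G$-invariant, $f=g^p$, and $\chi(\epsilon,t)=g(t+\epsilon)/g(t)=1+\epsilon/t$ satisfies the cocycle identity (automatically, being of the form $({}^{h}g)/g$) and $\chi^p=1+\epsilon^p/t^p=1$, yet $\chi$ visibly depends on $t$. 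Indeed it cannot descend, since $\Hom_k(\alpha_p,\mu_p)=0$ while $\chi\neq 1$. The only properties of $\alpha(\xi)$ that your construction uses are that it is a class $[f]\in H^{1,1}(k(U/G))$ whose pullback to $k(U)$ vanishes, and the class $[s]$ above has exactly those properties without being the value of any invariant (it is ramified at $s=0$, hence not unramified or balanced). So no manipulation of the cocycle and the $p$-th-power condition alone can close the gap; the Frobenius/basis-expansion argument you sketch would ``prove'' that $[s]$ comes from a homomorphism $\alpha_p\arrow\mu_p$, which is false.

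What is missing is the geometric input of Theorem \ref{bm}, and the paper's proof runs through it by an entirely different route: balancedness/unramifiedness lets one represent $\alpha(\xi)$ by a $\mu_p$-torsor on the whole scheme $U/G$ (using $H^{1,1}(U/G)\cong H^0_{\Zar}(U/G,H^{1,1})$), i.e.\ by a $G$-equivariant line bundle $L$ on $U$ with an equivariant trivialization of $L^{\otimes p}$; this extends over $V$ since $V-U$ has codimension at least $2$, and then Thomason's homotopy invariance $\Pic_G(V)\cong\Hom_k(G,G_m)$ together with the Kummer sequence in flat cohomology yields $H^1_G(V,\mu_p)\cong H^{1,1}(k)\oplus\Hom_k(G,\mu_p)$. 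It is this passage from the generic point to an integral, equivariant statement over all of $V$ that kills classes like $[s]$. A secondary point: your closing appeal to the $G/G^0$ Galois-descent step of Theorem \ref{degree1} cannot apply verbatim either, since homomorphisms to $\mu_p$ need not kill $G^0$ (e.g.\ $G=\mu_p$ or $G=G_m$ is connected), so nothing in this problem factors through the component group.
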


\begin{proof}
Let $V$ be a representation of $G$ over $k$ such that $G$ acts
freely on an open subset $U$ with a quotient scheme $U/G$
over $k$. We can assume that $V-U$ has codimension at least 2
in $V$. Let $\alpha$
be an invariant for $G$ over $k$ with values in $H^{1,1}$.
We know that $\alpha$ is determined by its class
in $H^{1,1}(k(U/G))$. Also, by Theorem \ref{bm},
this class is unramified over $U/G$; that is,
it lies in $H^0_{\Zar}(U/G,H^{1,1})$. The restriction
map $H^{1,1}(U/G)\arrow H^0(U/G,H^{1,1})$ is an isomorphism,
since both groups can be identified with the group
$H^0(U/G,\Omega^1_{\log})$ of differential forms. So we can view
$\alpha$ as a $\mu_p$-torsor over $U/G$.

Equivalently, $\alpha$ is a $G$-equivariant $\mu_p$-torsor over $U$.
We can also view this as a $G$-equivariant line bundle $L$
on $U$ with a $G$-equivariant trivialization of $L^{\otimes p}$.
Since $V-U$ has codimension at least 2 in $V$, the direct image
of $L$ from $U$ to $V$ is a line bundle. The $G$-action on $L$
and the trivialization of $L^{\otimes p}$ clearly extend to $V$.
So $\alpha$ extends uniquely to a $G$-equivariant $\mu_p$-torsor
over $V$.

The $G$-equivariant
Picard group of $V$ can be viewed as the Picard group
of the stack $[V/G]$ over $k$. By the homotopy invariance
of equivariant $K$-theory proved by Thomason, $\Pic_G(V)$ is isomorphic
to $\Pic_G(\Spec\, k)=\Hom_k(G,G_m)$ \cite[Theorem 4.1]{Thomason}.
By the exact sequence $1\arrow \mu_p\arrow G_m\arrow G_m\arrow 1$
of sheaves in the flat topology, we have an exact sequence
of flat cohomology groups over $[V/G]$:
$$(O(V)^*)^G\xrightarrow[p]{} (O(V)^*)^G\arrow H^1_G(V,\mu_p)
\arrow \Pic_G(V)\xrightarrow[p]{} \Pic_G(V).$$
Here the group of units $O(V)^*$ is equal to $k^*$, on which
$G$ acts trivially. Note that $(k^*)/(k^*)^p$ is isomorphic
to $H^1(k,\mu_p)=H^{1,1}(k)$. So this exact sequence can be rewritten
as
$$0\arrow H^{1,1}(k)\arrow H^1_G(V,\mu_p)\arrow \Hom_k(G,\mu_p)
\arrow 0.$$
Every homomorphism $G\arrow \mu_p$ determines an element of $H^1_G(V,\mu_p)$,
and so we can write
$$H^1_G(V,\mu_p)=H^{1,1}(k)\oplus \Hom_k(G,\mu_p).$$
We have an obvious homomorphism from $H^{1,1}(k)\oplus \Hom_k(G,\mu_p)$
to the group of invariants $\Inv_k^{1,1}(G)$, and this homomorphism
is an isomorphism by the description of $H^1_G(V,\mu_p)$ above.
\end{proof}


\small \sc UCLA Mathematics Department, Box 951555,
Los Angeles, CA 90095-1555

totaro@math.ucla.edu
\end{document}